\documentclass[11pt]{amsart}

\usepackage{amssymb}
\usepackage{amsmath}
\usepackage{enumitem}
\usepackage{graphicx}
\usepackage{verbatim}
\usepackage{multirow}
\usepackage{color}
\usepackage{epstopdf}
\usepackage[all]{xy}
\usepackage{hyperref}
\usepackage{framed}

\usepackage[left=1.26in,right=1.26in,top=1.65in,bottom=1.3in]{geometry}
\setlength\marginparwidth{.8in}
\numberwithin{equation}{section}
\numberwithin{figure}{section}

\newtheorem*{theorem*}{Theorem}

\newtheorem{theorem}{Theorem}[section]
\newtheorem*{question*}{Question}
\newtheorem{lemma}[theorem]{Lemma}
\newtheorem{definition}[theorem]{Definition}
\newtheorem{proposition}[theorem]{Proposition}
\newtheorem{corollary}[theorem]{Corollary}

\newtheorem{remark}[theorem]{Remark}

\def\diam{\mathop{\hbox{\rm diam}}}

\def \L{\Lambda}

\def \M{\mathcal{M}}

\newcommand{\C}{\mathcal{C}}

\newcommand{\RR}{\mathbb{R}}
\newcommand{\NN}{\mathbb{N}}
\newcommand{\ph}{\varphi}
\newcommand{\eps}{\varepsilon}

\newcommand{\htop}{h_\mathrm{top}}
\newcommand{\Vl}{V_\mathrm{loc}}
\newcommand{\phigeo}{\varphi^{\mathrm{geo}}}
\newcommand{\ulim}{\varlimsup}
\newcommand{\llim}{\varliminf}
\newcommand{\Zspan}{Z^{\mathrm{span}}}
\newcommand{\Zsep}{Z^{\mathrm{sep}}}
\newcommand{\Zper}{Z^{\mathrm{per}}}
\newcommand{\one}{\mathbf{1}}
\newcommand{\bpsi}{\overline{\psi}}

\DeclareMathOperator{\Int}{int}
\DeclareMathOperator{\Per}{Per}

\newcommand{\foot}[1]{\marginpar{\tiny\raggedright #1}}
\newcommand{\vc}[1]{\foot{VC: #1}}

\title
[Equilibrium states in dynamics via geometric measure theory]
{
Equilibrium states in dynamical systems via geometric measure theory
}
\author{Vaughn Climenhaga}
\address{Department of Mathematics \\ University of Houston \\ Houston, TX
77204, USA}
\email{climenha@math.uh.edu}
\urladdr{https://www.math.uh.edu/~climenha/}
\author{Yakov Pesin}
\address{Department of Mathematics \\ Pennsylvania State University \\
University Park, PA 16802, USA}
\email{pesin@math.psu.edu}
\urladdr{http://www.math.psu.edu/pesin/}
\author{Agnieszka Zelerowicz}
\address{Department of Mathematics \\ Pennsylvania State University \\
University Park, PA 16802, USA}
\email{axz157@psu.edu}

\begin{document}

\date{\today}

\begin{abstract}
Given a dynamical system with a uniformly hyperbolic (``chaotic'') attractor, the physically relevant Sinai--Ruelle--Bowen (SRB) measure can be obtained as the limit of the dynamical evolution of the leaf volume along local unstable manifolds.  We extend this geometric construction to the substantially broader class of equilibrium states corresponding to H\"older continuous potentials; these states arise naturally in statistical physics and play a crucial role in studying stochastic behavior of dynamical systems. The key step in our construction is to replace leaf volume with a reference measure that is obtained from a Carath\'eodory dimension structure via an analogue of the construction of Hausdorff measure.  In particular, we give a new proof of existence and uniqueness of equilibrium states that does not use standard techniques based on Markov partitions or the specification property; our approach can be applied to systems that do not have Markov partitions and do not satisfy the specification property.
\end{abstract}

\thanks{V.~C.\ is partially supported by NSF grants DMS-1362838 and
DMS-1554794. Ya.~P.\ and A.~Z.\ are partially supported by NSF grant
DMS-1400027.}
\subjclass[2010]{Primary 37D35, 37C45; secondary 37C40, 37D20}

\maketitle

\setcounter{tocdepth}{1}
\tableofcontents



\section{Introduction}

\subsection{Systems with hyperbolic behavior}
A \emph{smooth dynamical system with discrete time} consists of a smooth manifold $M$ -- the \emph{phase space} -- and a diffeomorphism $f\colon M\to M$.  Each state of the system is represented by a point $x\in M$, whose \emph{orbit} $(f^n(x))_{n\in \mathbb{Z}}$ gives the time evolution of that state.  We are interested in the case when the dynamics of $f$ exhibit \emph{hyperbolic} behavior.  Roughly speaking, this means that orbits of nearby points separate exponentially quickly in either forward or backward time; if the phase space is compact, this leads to the phenomenon popularly known as `chaos'.

Hyperbolic behavior turns out to be quite common, and for such systems it is not feasible to make specific forecasts of a single trajectory far into the future, because small initial errors quickly grow large enough to spoil the prediction.  On the other hand, one may hope to make statistical predictions about the asymptotic behavior of orbits of $f$. A measurement of the system corresponds to a function $\ph\colon M\to\RR$; the sequence $\ph$, $\ph\circ f$, $\ph\circ f^2,\dots$ represents the same observation made at successive times.  When specific forecasts of $\ph\circ f^n$ are impossible, we can treat this sequence as a stochastic process and make predictions about its asymptotic behavior.  For a more complete discussion of this point of view, see \cite{ER85}, \cite[Chapter 1]{rM87}, and \cite{mV97}.

\subsection{Physical measures and equilibrium states}\label{sec:physical-es}
To fully describe the stochastic process $(\ph\circ f^n)_{n\in\mathbb{Z}}$, we need a probability measure $\mu$ on $M$ that represents the likelihood of finding the system in a given state at the present time. The measure $f_*\mu$ defined by 
$\int\ph\,d(f_*\mu)=\int\ph\circ f\,d\mu$ represents the distribution one unit of time into the future. An \emph{invariant} measure has $\mu=f_*\mu$, and hence $\mu=f_*^n\mu$ for all $n$, so the sequence of observations becomes a \emph{stationary} stochastic process.  

In this paper we will consider \emph{uniformly hyperbolic systems}, for which the tangent bundle admits an invariant splitting $TM = E^u \oplus E^s$ such that $E^u$ is uniformly expanded and $E^s$ uniformly contracted by $Df$; see \S\ref{sec:motivators} for examples and \S\ref{sec:hyp-sets} for a precise definition.  Such systems have an extremely large set of invariant measures; for example, standard results show that there are infinitely many periodic orbits, each supporting an atomic invariant measure. Thus one is led to the problem of selecting a distinguished measure, or class of measures, that is most dynamically significant.

Since we work on a smooth manifold, it would be natural to consider an invariant volume form on $M$, or at least an invariant measure that is absolutely continuous with respect to volume.  However, for dissipative systems such as the \emph{solenoid} described in \S\ref{sec:solenoid}, no such invariant measure exists, and one must instead look for a \emph{Sinai--Ruelle--Bowen (SRB) measure}, which we describe in \S\S\ref{sec:conditional}--\ref{sec:physical}.  Such a measure is absolutely continuous ``in the unstable direction'', which is enough to guarantee that it is \emph{physically relevant}; it describes the asymptotic statistical behavior of volume-typical trajectories.

SRB measures can be constructed via the following ``geometric approach'': let $m$ be normalized Lebesgue measure (volume) for some Riemannian metric on $M$; consider its forward iterates $f_*^n m$; then average the first $N$ of these and take a limit measure as $N\to\infty$.  

Another approach to SRB measures, which we recall in \S\ref{sec:es}, is via \emph{thermodynamic formalism}, which imports mathematical tools from equilibrium statistical physics in order to describe the behavior of large ensembles of trajectories.  This program began in the late 1950's, when Kolmogorov and Sinai introduced the concept of entropy into dynamical systems; see \cite{aK07} for a historical overview.  Given a \emph{potential function} $\ph\colon M\to \RR$, one studies the \emph{equilibrium states} associated to $\ph$, which are invariant measures that maximize the quantity $h_\mu(f) + \int\ph\,d\mu$, where $h_\mu$ denotes the Kolmogorov--Sinai entropy.\footnote{From the statistical physics point of view, the quantity $E_\mu := -(h_\mu(f) + \int\ph\,d\mu)$ is the free energy of the system, so that an equilibrium state minimizes the free energy; see \cite[\S1.6]{oS15} for more details.} The maximum value is called the \emph{topological pressure} of $\ph$ and denoted $P(\ph)$.

In the 1960s and 70s, it was shown by Sinai, Ruelle, and Bowen that for uniformly hyperbolic systems, every H\"older continuous potential has a unique equilibrium state (see Section \ref{sec:hyp-sets}). Applying this result to the particular case of the \emph{geometric potential}\footnote{Here the determinant is taken with respect to any orthonormal bases for $E^u(x)$ and $E^u(f(x))$.  If the map $f$ is of class of smoothness $C^{1+\alpha}$ for some $\alpha>0$, then one can show that $\phigeo(x)$ is H\"older continuous.} $\phigeo(x) = -\log |\det Df|_{E^u(x)}|$, one has $P(\phigeo)=0$ and the equilibrium state is the SRB measure described above; see \S\ref{sec:es}.

\subsection{Different approaches to constructing equilibrium states}
There are two main classical approaches to thermodynamic formalism. The first uses \emph{Markov partitions} of the manifold $M$; we recall the general idea in \S\ref{sec:Markov}.  The second uses the \emph{specification property}, which we overview in \S\ref{gibbs-property}.

The purpose of this paper is to describe a third ``geometric'' approach,  which was outlined above for SRB measures: produce an equilibrium state as a limiting measure of the averaged pushforwards of some \emph{reference measure}, which need not be invariant.   For the physical SRB measure, this reference measure was Lebesgue; to extend this approach to other equilibrium states, one must start by choosing a new reference measure.  The definition of this reference measure, and its motivation and consequences, is the primary goal of this paper, and our main result can be roughly stated as follows:

\begin{framed}\noindent
\emph{For every H\"older continuous potential $\ph$, one can use the tools of geometric measure theory to define a reference measure $m_\ph$ for which the pushforwards $f_*^n m_\ph$ converge in average to the unique equilibrium state for 
$\ph$.}
\end{framed}

A precise statement of the result is given in \S\ref{sec:main-result}.  An important motivation for this work is that the ``geometric''  approach can be applied to more general situations beyond the uniformly hyperbolic systems studied in this paper.  For example, the geometric approach was used in \cite{CDP} to construct SRB measures for some non-uniformly hyperbolic systems, and in \cite{CPZ} we use it to construct equilibrium states for some partially hyperbolic systems.  The first two approaches -- Markov partitions and specification -- have also been extended beyond uniform hyperbolicity (see \cite{CP17} for a survey of the literature), but the overall theory in this generality is still very far from being complete, so it seems worthwhile to add another tool by developing the geometric approach as well.

\subsection{Reference measures for general potentials}

In the geometric construction of the physical SRB measure, one can take the reference measure to be either Lebesgue measure $m$ on $M$ or Lebesgue measure $m_W$ on any \emph{local unstable leaf} $W=\Vl^u(x)$.  These leaves are $d^u$-dimensional submanifolds of $M$ that are tangent at each point to the \emph{unstable distribution} $E^u(x) \subset T_xM$; they are expanded by the dynamics of $f$ and have the property that $f(\Vl^u(x)) \supset \Vl^u(f(x))$ (see \S\ref{sec:hyp-sets} for more details).  Given a local unstable leaf $W=\Vl^u(x)$, we will write $m_W$ or $m_x^u$ for the leaf volume determined by the induced Riemannian metric.  This has the following key properties.
\begin{enumerate}
\item $m_W = m_x^u$ is a finite nonzero Borel measure on $W$.
\item If $W_1$ and $W_2$ are local unstable leaves with non-trivial intersection, then $m_{W_1}$ and $m_{W_2}$ agree on the overlap.
\item 
Under the dynamics of $f$, the leaf volumes scale by the rule
\begin{equation}\label{eqn:volume-scaling}
m_{f(x)}^u(A) = \int_{f^{-1} A} |\det Df|_{E^u(y)}| \,dm_x^u(y).
\end{equation}
\end{enumerate}
As mentioned in \S\ref{sec:physical-es},
the geometric potential $\phigeo(x) = -\log |\det Df|_{E^u(x)}|$ has $P(\phigeo)=0$, so the integrand in \eqref{eqn:volume-scaling} can be written as $e^{P(\phigeo) - \phigeo(y)}$.
In \S\ref{sec:car}, given a continuous potential $\varphi$ we will construct on every local unstable leaf $W=\Vl^u(x)$ a reference measure $m_x^\C$ satisfying similar properties to $m_x^u$, but with scaling rule\footnote{Note that in general $P(\ph)\neq 0$.}
\begin{equation}\label{eqn:scaling}
m_{f(x)}^\C(A) = \int_{f^{-1}A} e^{P(\ph)-\ph(y)} \,dm_x^\C(y).
\end{equation}
The superscript $\C$ is shorthand for a \emph{Carath\'eodory dimension structure} determined by the potential $\ph$ and a scale $r>0$; see \S\ref{car-struc} for the essential facts about such structures, and \cite{pes97} for a complete description. Roughly speaking, the definitions of $P(\ph)$ and $m_x^\C$ are analogous to the definitions of Hausdorff dimension and Hausdorff measure, respectively, but take the dynamics into account.
Recall that the latter definitions involve covers by balls of decreasing radius; the modification to obtain our quantities involves covering by dynamically defined balls, as explained in \S\ref{sec:car}.

\subsection{Some history}
The idea of constructing dynamically significant measures for uniformly hyperbolic maps by first finding measures on unstable leaves with certain scaling properties goes at least as far back as work of Sinai \cite{yS68}, which relies on Markov partitions. For uniformly hyperbolic dynamical systems with continuous time (flows) and the potential $\varphi=0$ the corresponding equilibrium state, which is the measure of maximal entropy, was obtained by Margulis \cite{gM70}; he used a different construction of leaf measures via functional analysis of a special operator (induced by the dynamics) acting on the Banach space of continuous functions with compact support on unstable leaves. These leaf measures were studied further in \cite{RS75,BM77}.

Hasselblatt gave a description of the Margulis measure in terms of Hausdorff dimension \cite{bH89}, generalizing a result obtained by Hamenst\"adt for geodesic flows on negatively curved compact manifolds \cite{uH89}. In this geometric setting, where stable and unstable leaves are naturally identified with the ideal boundary of the universal cover, Kaimanovich observed in \cite{vK90,vK91} that these leaf measures could be identified with the measures on the ideal boundary introduced by Patterson \cite{sP76} and Sullivan \cite{dS79}. For geodesic flows in negative curvature, this approach was recently extended to nonzero potentials by Paulin, Pollicott, and Schapira \cite{PPS15}.

For general hyperbolic systems and nonzero potential functions, families of leaf measures with the appropriate scaling properties were constructed by Haydn \cite{nH94} and Leplaideur \cite{Lep}, both using Markov partitions.  The key innovation in the present paper is that we can construct these leaf measures directly, without using Markov partitions, by an approach reminiscent of Hasselblatt's from \cite{bH89}. This requires us to interpret quantities in thermodynamic formalism by analogy with Hausdorff dimension, an idea which was introduced by Bowen for entropy \cite{rB73}, developed by Pesin and Pitskel' for pressure \cite{PP84}, and generalized further by Pesin \cite{yP88,pes97}.

\subsection{Plan of the paper}
We describe some motivating examples in \S\ref{sec:motivators}, and give general background definitions in \S\ref{sec:thermodynamics}.  These sections are addressed to a general mathematical audience, and the reader who is already familiar with thermodynamic formalism for hyperbolic dynamical systems can safely skip to \S\ref{sec:main-result}, where we give the new definition of the reference measures $m_x^\C$ and formulate our main results.  In \S\ref{car-struc} we recall the necessary results on Carath\'eodory dimension characteristics and describe some applications of our results to dimension theory.  
For well-known general results, we omit the proofs and give references to the literature where proofs can be found.  For the new results stated here, we give an outline of the proofs in \S\ref{sec:proofs}, and refer to \cite{CPZ} for complete details.


\subsection*{Acknowledgments}
This work had its genesis in workshops at ICERM (Brown University) and ESI (Vienna) in March and April 2016, respectively.  We are grateful to both institutions for their hospitality and for creating a productive scientific environment.

\section{Motivating examples}\label{sec:motivators}

Before recalling general definitions about uniformly hyperbolic systems and their invariant measures in \S\ref{sec:thermodynamics}, we describe three examples to motivate the idea of a `physical measure'.  Our discussion here is meant to convey the overall picture and omits many details.

\subsection{Hyperbolic toral automorphisms}\label{sec:hta}
Our first example is the diffeomorphism $f$ on the torus $\mathbb{T}^2 = \RR^2/\mathbb{Z}^2$ induced by the linear action of the matrix $L=\big(\begin{smallmatrix} 2 & 1 \\ 1 & 1 \end{smallmatrix}\big)$ on $\RR^2$, as shown in Figure \ref{fig:hta}.

\begin{figure}[htbp]
\includegraphics[width=.9\textwidth]{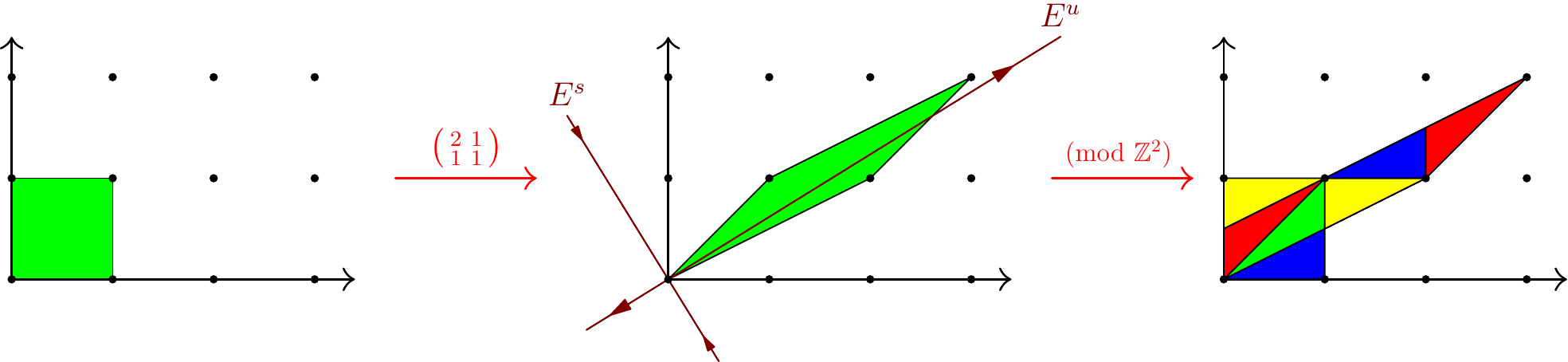}
\caption{Lebesgue measure is preserved by $f$.}
\label{fig:hta}
\end{figure}

This system is \emph{uniformly hyperbolic}:
The matrix $L$ has two positive real eigenvalues $\lambda_s < 1 < \lambda_u$, whose associated eigenspaces $E^s$ and $E^u$ give a $Df$-invariant splitting of the tangent bundle $T\mathbb{T}^2$.  The lines in $\RR^2$ parallel to these subspaces project to $f$-invariant foliations $W^s$ and $W^u$ of the torus.

What about invariant measures?
If $p\in \mathbb{T}^2$ has $f^n(p)=p$, then the measure
$\mu = \frac 1n(\delta_p + \delta_{f(p)} + \cdots + \delta_{f^{n-1}(p)})$ is invariant. Every point with rational coordinates is periodic for $f$, so this gives infinitely many $f$-invariant measures.  Lebesgue measure is also invariant since $\det Df = \det L = 1$. (This is far from a complete list, as we will see.) 

A measure $\mu$ is \emph{ergodic} if every $f$-invariant function (every
$\ph\in L^1(\mu)$ with $\ph =\ph\circ f$) is constant $\mu$-almost everywhere. One can check easily that the periodic orbit measures from above are ergodic, and with a little more work that Lebesgue measure is ergodic too.\footnote{This can be proved either by Fourier analysis or by the more geometric \emph{Hopf argument}, see \S\ref{sliding}.}  Birkhoff's ergodic theorem says that if $\mu$ is ergodic, then $\mu$-almost everywhere orbit has asymptotic behavior controlled by $\mu$.  More precisely, we say that the \emph{basin of attraction} for $\mu$ is the set of initial conditions satisfying a law of large numbers governed by $\mu$ for continuous observables:
\begin{equation}\label{eqn:basin}
B_\mu = \bigg\{x\in \mathbb{T}^2 : \frac 1n \sum_{k=0}^{n-1} \ph(f^k x) \xrightarrow{n\to\infty} \int \ph\,d\mu \text{ for all }\ph\in C(\mathbb{T}^2,\RR)\bigg\}.
\end{equation}
The ergodic theorem says that if $\mu$ is ergodic, then $\mu(B_\mu)=1$.

For the periodic orbit measures, this says very little, since it leaves open the possibility that the measure $\mu$ only controls the asymptotic behavior of finitely many orbits.\footnote{In fact $B_\mu$ is infinite, being a union of leaves of the stable foliation $W^s$.} For Lebesgue measure $m$, however, this says quite a lot: $m$ governs the statistical behavior of Lebesgue-almost every orbit, and in particular, a point chosen at random with respect to any volume form on $\mathbb{T}^2$ has a trajectory whose asymptotic behavior is controlled by $m$.  This is the sense in which Lebesgue measure is the `physically relevant' invariant measure, and we make the following definition.
\begin{definition}\label{def:physical}
An invariant measure $\mu$ for a diffeomorphism $f$ is a \emph{physical measure} if its basin $B_\mu$ has positive volume.
\end{definition}

\subsection{Smale--Williams solenoid}\label{sec:solenoid}
From Birkhoff's ergodic theorem, we see that if $\mu$ is an ergodic invariant measure that is equivalent to a volume form,\footnote{Recall that two measures  $\mu$ and $\nu$ are \emph{equivalent} if $\mu\ll\nu$ and $\nu\ll\mu$, in which case we write $\mu\sim\nu$.} then that volume form gives full weight to the basin $B_\mu$, and so a volume-typical trajectory has asymptotic behavior controlled by $\mu$.  

The problem now is that there are many examples for which no such $\mu$ exists. One such is the \emph{Smale--Williams solenoid} studied in \cite[\S I.9]{sS67} and \cite{rW67}; see also \cite[Lecture 29]{PC} for a gentle introduction and further discussion. This is a map from the open solid torus $U$ into itself. Abstractly, the solid torus is the direct product of a disc and a circle, so that one may use coordinates $(x,y,\theta)$ on $U$, where $x$ and $y$ give coordinates on the disc and $\theta$ is the angular coordinate on the circle. Define a map $f\colon U\to U$ by
\begin{equation}\label{eqn:solenoid}
f(x,y,\theta) := (\tfrac 14 x + \tfrac 12\cos\theta, \tfrac 14 y + \tfrac 12 \sin\theta, 2\theta).
\end{equation}
Figure \ref{fig:solenoid} shows two iterates of $f$, with half of the original torus for reference. 

\begin{figure}[htbp]
\includegraphics[width=.28\textwidth]{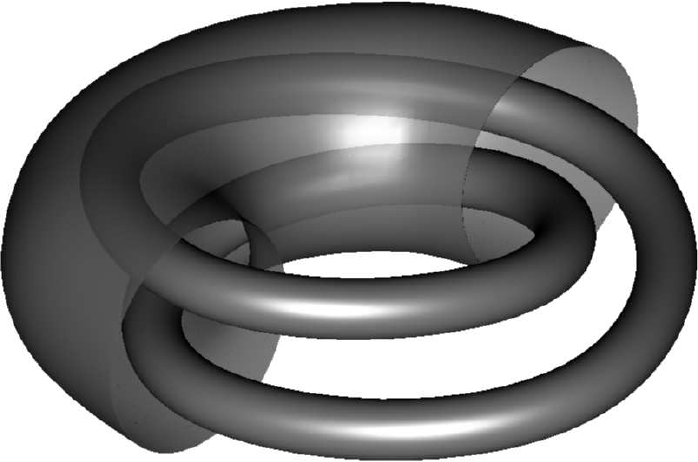}
\qquad\qquad
\includegraphics[width=.28\textwidth]{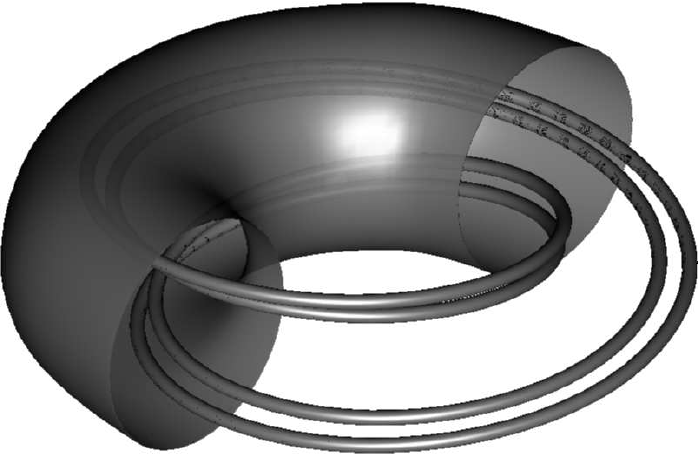}
\caption{No absolutely continuous invariant measure.}
\label{fig:solenoid}
\end{figure}

Every invariant measure is supported on the \emph{attractor} $\Lambda = \bigcap_{n\geq 0} f^n(U)$, which has zero volume.  In particular, there is no invariant measure that is absolutely continuous with respect to volume.   Nevertheless, it is still possible to find an invariant measure that is `physically relevant' in the sense given above.  To do this, first observe that since the solenoid map $f$ contracts distances along each cross-section $\mathbb{D}^2\times \{\theta_0\}$, any two points in the same cross-section have orbits with the same (forward) asymptotic behavior:
given an invariant measure $\mu$, the basin $B_\mu$ is a union of such cross-sections.

\begin{figure}[htbp]
\includegraphics[width=.6\textwidth]{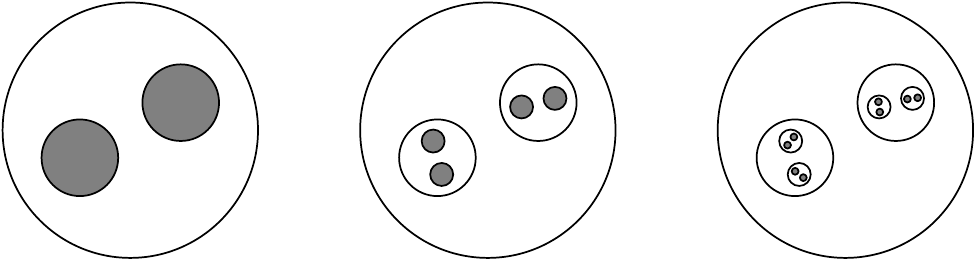}
\caption{A cross-section of the attractor.}
\label{fig:swscantor}
\end{figure}

This fact suggests that we should look for an invariant measure that is absolutely continuous in the direction of the circle coordinate $\theta$, which is expanded by $f$.  To construct such a measure, observe that each cross-section intersects the images $f^n(U)$ in a nested sequence of unions of discs, as shown in Figure \ref{fig:swscantor}, so the attractor $\Lambda$ intersects this cross-section in a Cantor set.  Thus $\Lambda$ is locally the direct product of an interval in the expanding direction and a Cantor set in the contracting directions. Let $m^u$ be Lebesgue measure on the circle, and let $\mu$ be the measure on $\Lambda$ that projects to $m^u$ and gives equal weight to each of the $2^n$ pieces at the $n$th level of the Cantor set construction in Figure \ref{fig:swscantor}. One can show without too much difficulty that $\mu$ is invariant and ergodic, and that moreover $B_\mu$ has full volume in the solid torus $U$.  Thus even though $\mu$ is singular, it is still the physically relevant invariant measure due to its absolute continuity in the expanding direction.

\subsection{Smale's horseshoe}\label{sec:horseshoe}
Finally, we recall an example for which no physical measure exists -- the \emph{horseshoe} introduced by Smale in the early 1960s; see \cite[\S I.5]{sS67} and \cite[Lecture 31]{PC} for more details, see also \cite{sS98} for more history.
Consider a map $f\colon R\to \mathbb{R}^2$ which acts on the square $R:=[0,1]^2$
as shown in Figure \ref{fig:horseshoe}: first the square is contracted vertically by a factor of $\alpha<1/2$ and stretched horizontally by a factor of $\beta> 2$;
then it is bent and positioned so that $f(R)\cap R$ consists of two rectangles of height $\alpha$ and length $1$.
 
\begin{figure}[htbp]
\includegraphics[width=.85\textwidth]{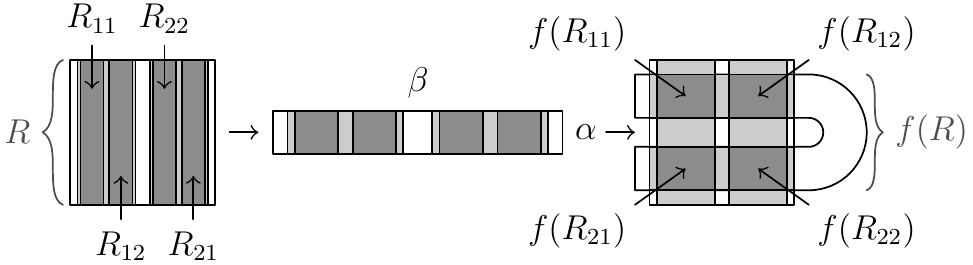}
\caption{No physical measure.}
\label{fig:horseshoe}
\end{figure}

Observe that a part of the square $R$ is mapped to the complement of $R$. Consequently, $f^2$ is not defined on the whole square $R$, but only on the union of two vertical strips in $R$.  The set where $f^3$ is defined is the union of four vertical strips, two inside each of the previous ones, and so on; there is a Cantor set $C^u \subset [0,1]$ such that every point outside $C^u\times [0,1]$ can be iterated only finitely many times before leaving $R$.  In particular, every $f$-invariant measure has $B_\mu \subset C^u\times [0,1]$, and hence $B_\mu$ is Lebesgue-null, so there is no physical measure.

Note that the argument in the previous paragraph did not consider the stable (vertical) direction at all.  For completeness, observe that there is a Cantor set $C^s\subset [0,1]$ such that $\bigcap_{n\geq 0} f^n(R) = [0,1]\times C^s$, and that the maximal $f$-invariant set $\Lambda:=  \bigcap_{n\in \mathbb{Z}} f^n(R)$ is a direct product $C^u\times C^s$.

\subsection{Main ideas}\label{sec:main-ideas}
The three examples discussed so far have certain features in common, which are representative of uniformly hyperbolic systems.  

First: every invariant measure lives on a compact invariant set $\Lambda$ that is locally the direct product of two sets, one contracted by the dynamics and one expanded.  For the hyperbolic toral automorphism $\big(\begin{smallmatrix} 2 & 1 \\ 1 & 1\end{smallmatrix}\big)$, $\Lambda=\mathbb{T}^2$ and both of these sets were intervals; for the solenoid, there was an interval in the expanding direction and a Cantor set in the contracting direction;  for the horseshoe, both were Cantor sets.

Second: the physically relevant invariant measure (when it existed) could also be expressed as a direct product.  For the hyperbolic toral automorphism, it was a product of Lebesgue measure on the two intervals.  For the solenoid, it was a product of Lebesgue measure on the interval (the expanding circle coordinate) and a \emph{$(\frac 12,\frac 12)$-Bernoulli measure} on the contracting Cantor set.

Third, and most crucially for our purposes: in identifying the physical measure, it is enough to look at how invariant measures behave \emph{along the expanding (unstable) direction}.  We will make this precise in \S\ref{sec:conditional} when we discuss \emph{conditional measures}, and this idea will motivate our main construction in \S\ref{sec:car} of reference measures associated to different potential functions.

Note that there is an asymmetry in the previous paragraph, because we privilege the unstable direction over the stable one.  This is because our notion of physical measure has to do with asymptotic time averages as $n\to+\infty$.  If we would instead consider the asymptotics as $n\to-\infty$, then the roles of stable and unstable objects would be reversed. We should also stress an important difference between the case when the invariant set $\Lambda$ is an attractor (as in the second example) and the case when it is a Cantor set (as in the third example): in the former case the trajectories that start near $\Lambda$ exhibit chaotic behavior for all time $t>0$ (the phenomenon known as \emph{persistent chaos}), while in the latter case the chaotic behavior occurs for a limited period of time whenever the trajectory passes by in a vicinity of $\Lambda$ (the phenomenon known as \emph{intermittent chaos}). 

\section{Equilibrium states and their relatives}\label{sec:thermodynamics}

\subsection{Hyperbolic sets}\label{sec:hyp-sets}
Now we make our discussion more precise and more general. We consider a smooth Riemannian manifold $M$ and a $C^{1+\alpha}$ diffeomorphism $f\colon M\to M$, and restrict our attention to the dynamics of $f$ on a \emph{locally maximal hyperbolic set}. We recall here the basic definition and most relevant properties, referring the reader to the book of Katok and Hasselblatt \cite[Chapter 6]{Kat} for a more complete account. In what follows it is useful to keep in mind the three examples discussed above.

A \emph{hyperbolic set} for $f$ is a compact set $\Lambda\subset M$ with $f(\Lambda)=\Lambda$ such that for every $x\in\L$, the tangent space admits a decomposition $T_xM=E^s(x)\oplus E^u(x)$ with the following properties.
\begin{enumerate}
\item The splitting is $Df$-invariant: $Df_x(E^\sigma(x)) = E^\sigma(fx)$ for $\sigma=s,u$.
\item The stable subspace $E^s(x)$ is uniformly contracting and the unstable subspace $E^u(x)$ is uniformly expanding: there are constants $C\geq 1$ and $0<\chi<1$ such that for every $n\geq 0$ and $v^{s,u}\in E^{s,u}(x)$, we have
\[
\|Df^nv^s\|\leq C\chi^n\|v^s\| \quad\text{and}\quad
\|Df^{-n}v^u\|\leq C\chi^n\|v^u\|.
\]
Replacing the original Riemannian metric with an \emph{adapted metric},\footnote{This metric may not be smooth, but will be at least $C^{1+\gamma}$ for some $\gamma>0$, which is sufficient for our purposes.} we can (and will) take $C=1$.
\end{enumerate}
In the case $\Lambda=M$ the map $f$ is called an \emph{Anosov diffeomorphism}.

Of course there are some diffeomorphisms that do not have any hyperbolic sets (think of isometries), but it turns out that a very large class of diffeomorphisms do, including the examples from the previous section.  These examples also have the property that 
$\Lambda$ is \emph{locally maximal}, meaning that there is an open set 
$U\supset \Lambda$ for which any invariant set $\Lambda'\subset U$ is contained in 
$\Lambda$; in other words, $\Lambda = \bigcap_{n\in \mathbb{Z}} f^n(U)$.  In this case every $C^1$-perturbation of $f$ also has a locally maximal hyperbolic set contained in $U$; in particular, the set of diffeomorphisms possessing a locally maximal hyperbolic set is open in the $C^1$-topology.

A number of properties follow from the definition of a hyperbolic set.  First, the subspaces $E^{s,u}(x)$ depend continuously on $x\in\L$; in particular, the angle between them is uniformly away from zero. In fact, since $f$ is $C^{1+\alpha}$, the dependence on $x$ is H\"older continuous:
\begin{equation}\label{holder}
\rho(E^{s,u}(x),E^{s,u}(y))\le K d(x,y)^\beta,
\end{equation} 
where $\rho$ is the Grassmannian distance between the subspaces, $d$ is the
distance in $M$ generated by the (adapted) Riemannian metric, and $K, \beta>0$.

\begin{proposition}[{\cite[Theorem 6.2.3]{Kat}}]\label{prop:local-mfds}
The subspaces $E^{s,u}$ can be ``integrated'' locally: for every $x\in\L$ there exist \emph{local stable and unstable} submanifolds $\Vl^{s,u}(x)$ given via the graphs of 
$C^{1+\alpha}$ functions $\psi_x^{s,u}\colon B_x^{s,u}(0,\tau)\to E^{u,s}(x)$,\footnote{Here $B_x^{s,u}(0,\tau)$ is the ball in $E^{s,u}(x) \subset T_x M$ of radius $\tau$ centered at $0$.} for which we have:
\begin{enumerate}[label=\textup{(\arabic{*})}]
\item $\Vl^{s,u}(x)=\exp_x\{v+\psi^{s,u}_x(v): v\in B_x^{s,u}(0,\tau)\}$;
\item $x\in \Vl^{s,u}(x)$ and $T_x\Vl^{s,u}(x)=E^{s,u}(x)$;
\item $f(\Vl^u(x)) \supset \Vl^u(f(x))$ and $f(\Vl^s(x)) \subset \Vl^s(f(x))$;
\item\label{leaves-contract} there is $\lambda \in (\chi, 1)$ such that 
$d(f(y),f(z))\le\lambda d(y,z)$ for all $y,z\in \Vl^s(x)$ and 
$d(f^{-1}(y),f^{-1}(z))\le\lambda d(y,z)$ for all $y,z\in \Vl^u(x)$;\footnote{This means that the local unstable manifold for $f$ is the local stable manifold for $f^{-1}$.}
\item\label{unif-holder} there is $C>0$ such that the H\"older semi-norm satisfies $|D\psi^{s,u}_x|_\alpha\leq C$ for all $x\in\Lambda$.
\end{enumerate}
\end{proposition}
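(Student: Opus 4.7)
The statement is the classical Hadamard--Perron theorem for locally maximal hyperbolic sets, and I would prove it by the graph-transform method. After passing to an adapted metric so that $C=1$, fix $x\in\Lambda$ and work in the exponential chart at $x$, replacing $f$ by its connecting map
\[
\tilde f_x := \exp_{f(x)}^{-1}\circ f\circ \exp_x,
\]
defined on a ball $B_x(0,r_0)\subset T_xM$. These maps satisfy $\tilde f_x(0)=0$ and $D\tilde f_x(0)=Df_x$, and their nonlinear parts $\tilde f_x-Df_x$ are $C^{\alpha}$-small on $B_x(0,r_0)$, uniformly in $x$ thanks to the compactness of $\Lambda$ and the $C^{1+\alpha}$-regularity of $f$.

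Next, on the space $\mathcal X$ of $\Lambda$-indexed families $\psi=(\psi_x)_{x\in\Lambda}$ of $1$-Lipschitz maps $\psi_x\colon B^u_x(0,\tau)\to E^s(x)$ with $\psi_x(0)=0$, I would define the graph transform $\mathcal T$ by declaring the graph of $(\mathcal T\psi)_{f(x)}$ to be
\[
\tilde f_x(\mathrm{graph}\,\psi_x)\cap\bigl(B^u_{f(x)}(0,\tau)\times E^s(f(x))\bigr).
\]
For $\tau$ sufficiently small, the invariance of the hyperbolic splitting together with the smallness of the nonlinear remainder keeps this image inside the unstable cone at $f(x)$, while the expansion of $Df_x|_{E^u(x)}$ guarantees that its projection to $E^u(f(x))$ covers $B^u_{f(x)}(0,\tau)$, so $\mathcal T\colon\mathcal X\to\mathcal X$ is well defined. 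The standard cone-field computation then shows that $\mathcal T$ is a $C^0$-contraction with rate essentially $\chi$, so the Banach fixed-point theorem yields a unique $\psi^u=(\psi^u_x)\in\mathcal X$. Setting $\Vl^u(x):=\exp_x\{v+\psi^u_x(v):v\in B^u_x(0,\tau)\}$ immediately gives (1); item (2) follows from $\psi^u_x(0)=0$ together with the fact that a graph lying in the unstable cone and differentiable at $x$ has tangent space $E^u(x)$ there; and (3) is built into the fixed-point equation $\tilde f_x(\mathrm{graph}\,\psi^u_x)\supset\mathrm{graph}\,\psi^u_{f(x)}$. Item (4) follows because vectors tangent to $\Vl^u(x)$ lie in the unstable cone, where $Df^{-1}$ contracts at a rate slightly worse than $\chi$, absorbed into some $\lambda\in(\chi,1)$. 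The stable leaves come from the same argument applied to $f^{-1}$.

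The main obstacle is the $C^{1+\alpha}$-regularity claim and the uniform Hölder bound in \ref{unif-holder}. I would first obtain $C^1$-smoothness by a second fixed-point argument: differentiating the graph-transform equation gives a linear equation for the candidate derivative $D\psi^u_x$, viewed as a section of $\mathrm{Hom}(E^u(x),E^s(x))$ over $\Lambda$, and this induced operator is again a contraction by essentially the same cone-field principle one step up. To upgrade to $C^{1+\alpha}$ with a uniform Hölder constant, I would use the Hölder continuity \eqref{holder} of the splitting together with the $C^{1+\alpha}$-regularity of $f$ to set up an invariance inequality for the Hölder semi-norm of $D\psi^u_x$; the key point is that the relevant contraction rate picks up a factor of order $\chi^{\alpha}$ against the unstable expansion and therefore stays strictly smaller than $1$, so the bound propagates uniformly in $x$ and yields \ref{unif-holder}. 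For the complete technical implementation I would refer the reader to \cite[Theorem 6.2.3]{Kat}.
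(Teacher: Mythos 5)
The paper gives no proof of this proposition---it is quoted directly from \cite[Theorem 6.2.3]{Kat}---and your sketch is exactly the Hadamard--Perron graph-transform argument presented there: adapted metric, connecting maps in exponential charts, a contraction on families of Lipschitz graphs over $E^u(x)$ yielding items (1)--(4), and a second invariance estimate (with contraction rate beaten by a factor of order $\chi^{\alpha}$) to propagate the uniform $C^{1+\alpha}$ bound of item (5). Your proposal is correct and takes essentially the same approach as the paper's cited source.
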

The number $\tau$ is the \emph{size} of the local manifolds and will be fixed at a sufficiently small value to guarantee various estimates (such as the last item in the list above); note that the properties listed above remain true if $\tau$ is decreased. The manifolds $\Vl^{s,u}(x)$ depend continuously on $x\in\L$.

Given a hyperbolic set $\Lambda$, there is $\eps>0$ such that for every $x,y\in\Lambda$ with $d(x,y) < \eps$,  the intersection $\Vl^s(x)\cap \Vl^u(y)$ consists of a single point, denoted by $[x,y]$ and called the \emph{Smale bracket} of $x$ and $y$.  One can show that $\Lambda$ is locally maximal if and only if $[x,y]\in \Lambda$ for all such $x,y$; this is the local product structure referred to in \S\ref{sec:main-ideas}.

\begin{definition}
A closed set $R\subset \Lambda$ is called a \emph{rectangle} if $[x,y]$ is defined and lies in $R$ for all $x,y\in R$.  Given $p\in R$ we write $V_R^{s,u}(p) = \Vl^{s,u}(p) \cap R$ for the parts of the local manifolds that lie in $R$.
\end{definition}

\begin{figure}[htbp]
\includegraphics[width=.4\textwidth]{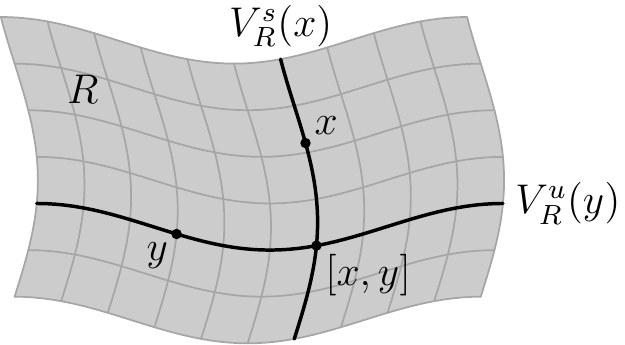}
\caption{A rectangle in the case when $\L=M$.}
\label{fig:rectangle}
\end{figure}

Given a rectangle $R$ and a point $p\in R$, let $A = V_R^u(p) \subset \Vl^u(p)$ and $B=V_R^s(p) \subset \Vl^s(p)$. Then $[x,y]$ is defined for all $x\in A$ and $y\in B$, and  
\begin{equation}\label{rectangle}
R = [A,B] := \{ [x,y] : x\in A, y\in B\}.
\end{equation}
Conversely, it is not hard to show that \eqref{rectangle} defines a rectangle whenever 
$p\in \Lambda$ and the closed sets $A\subset \Vl^u(p)\cap \Lambda$ and 
$B\subset \Vl^s(p)\cap \Lambda$ are contained in a sufficiently small neighborhood of $p$.

For the hyperbolic toral automorphism from \S\ref{sec:hta},  we can take $A$ and $B$ to be intervals around $p$ in the stable and unstable directions respectively; then $[A,B]$ is the direct product of two intervals, consistent with our usual picture of a rectangle.  However, in general, we could just as easily let $A$ and $B$ be Cantor sets, and thus obtain a dynamical rectangle that does not look like the picture we are familiar with.  For the solenoid and horseshoe, this is the only option; in these examples the hyperbolic set $\Lambda$ has zero volume and empty interior, and we see that rectangles are not even connected.

Indeed, there is a general dichotomy: given a $C^{1+\alpha}$ diffeomorphism $f$ and a locally maximal hyperbolic set $\Lambda$, we either have $\Lambda=M$ (in which case $f$ is an Anosov diffeomorphism), or $\Lambda$ has zero volume.\footnote{This dichotomy can fail if $f$ is only $C^1$; see \cite{rB75b}.} Even when $m(\Lambda)=0$, the dynamics on 
$\Lambda$ still influences the behavior of nearby trajectories, as is most apparent when 
$\Lambda$ is an \emph{attractor}, meaning that there is an open set $U\supset \Lambda$ (called a \emph{trapping region}) such that $\overline{f(U)}\subset U$ and 
$\Lambda=\bigcap_{n\in\mathbb{N}}f^n(U)$, as was the case for the solenoid. In this case every trajectory that enters $U$ is \emph{shadowed} by some trajectory in $\Lambda$, and $\Lambda$ is a union of unstable manifolds: $\Vl^u(x) \subset \Lambda$ for every $x\in\Lambda$.

One final comment on the topological dynamics of hyperbolic sets is in order.  Recall that if $X$ is a compact metric space and $f\colon X\to X$ is continuous, then the system $(X,f)$ is called \emph{topologically transitive} if for every open sets $U,V\subset X$ there is $n\in \NN$ such that $U\cap f^{-n}(V) \neq\emptyset$, and \emph{topologically mixing} if for every such $U,V$ there is $N\in \NN$ such that $U\cap f^{-n}(V)\neq\emptyset$ for all $n\geq N$.  Every locally maximal hyperbolic set $\Lambda$ admits a \emph{spectral decomposition} \cite{sS67}: it can be written as a union of disjoint closed invariant subsets $\Lambda_1,\dots,\Lambda_k\subset \Lambda$ such that each $f|_{\Lambda_i}$ is topologically transitive, and moreover each $\Lambda_i$ is a union of disjoint closed invariant subsets $\Lambda_{i,1},\dots, \Lambda_{i,n_i}$ such that 
$f(\Lambda_{i,j}) = \Lambda_{i,j+1}$ for $1\leq j < n_i$, 
$f(\Lambda_{i,n_i}) = \Lambda_{i,1}$, and each $f^{n_i}|_{\Lambda_{i,j}}$ is topologically mixing.  For this reason there is no real loss of generality in restricting our attention to topologically mixing locally maximal hyperbolic sets.

\subsection{Conditional measures}\label{sec:conditional}
Now we consider measures on $\Lambda$, writing $\M(\Lambda)$ for the set of all Borel probability measures on $\Lambda$, and $\M(f,\Lambda)$ for the set of all such measures that are $f$-invariant.
We briefly mention several basic facts that play an important role in the proofs (see \cite[Chapter 4]{EW11} for details): the set $\M(f,\L)$ is convex, and its extreme points are precisely the ergodic measures; every $\mu\in \M(f,\L)$ has a unique \emph{ergodic decomposition} $\mu = \int_{\M^e(f,\L)} \nu \,d\zeta(\nu)$, where $\zeta$ is a probability measure on the space of ergodic measures $\M^e(f,\L)$; and finally, $\M(f,\L)$ is compact in the weak* topology.

As suggested by the discussion in  \S\ref{sec:main-ideas},  in order to understand how an invariant measure $\mu$ governs the forward asymptotic behavior of trajectories, we should study how $\mu$ behaves ``along the unstable direction''.  To make this precise, we now recall the notion of \emph{conditional measures}; for more details, see \cite{Roh} or \cite[\S5.3]{EW11}.

Given $\mu\in \M(f,\Lambda)$, consider a rectangle $R \subset \Lambda$ with $\mu(R)>0$. Let $\xi$ be the partition of $R$ by local unstable sets $V_R^u(x) = \Vl^u(x) \cap R$, $x\in R$; these depend continuously on the point $x$, so the partition $\xi$ is \emph{measurable}.
This implies that the measure $\mu$ can be disintegrated with respect to $\xi$: for $\mu$-almost every $x\in R$, there is a \emph{conditional measure} $\mu_{V_R^u(x)}^\xi$ on the partition element $V_R^u(x)$ for any Borel subset $E\subset R$, we have\footnote{For a finite partition, the obvious way to define a conditional measure on a partition element $A$ with $\mu(A)>0$ is to put $\mu_A(E) = \mu(E\cap A)/\mu(A)$.  Roughly speaking, measurability of $\xi$ guarantees that it can be written as a limit of finite partitions, and the conditional measures in \eqref{gibbs-split} are the limits of the conditional measures for the finite partitions; see Proposition \ref{prop:conditionals} for a precise statement.}
\begin{equation}\label{gibbs-split}
\mu(E)=\int_{R}\int_{V_R^u(x)}\one_{E}(y)\,d\mu_{V_R^u(x)}^\xi(y)\,d \mu(x).   
\end{equation}
Since $\mu_{V_R^u(x)}^\xi = \mu_{V_R^u(x')}^\xi$ whenever $x'\in V_R^u(x)$, the outer integral in \eqref{gibbs-split} can also be written as an integral over the quotient space $R/\xi$, which inherits a factor measure $\tilde\mu$ from $\mu|_R$ in the natural way.
By the local product structure of $R$, we can also fix $p\in R$ and identify $R/\xi$ with $V_R^s(p)$.  Then $\tilde\mu$ gives a measure on $V_R^s(p)$ by $\tilde\mu(A) = \mu(\bigcup_{x\in A} V_R^u(x))$.  Writing $\mu_x^u = \mu_{V_R^u(x)}^\xi$ for the conditional measure on the leaf through $x$, we can rewrite \eqref{gibbs-split} as\footnote{Note that $\mu_x^u$ depends on $R$, although this is suppressed in the notation.}
\begin{equation}\label{gibbs-split2}
\mu(E) = \int_{V_R^s(p)} \int_{V_R^u(x)} \one_E(y) \,d\mu_x^u(y) \,d\tilde\mu(x).
\end{equation}
This disintegration is unique under the assumption that the conditional measures are normalized.  Although the definition depends on $R$, in fact choosing a different rectangle $R'$ merely has the effect of multiplying $\mu_x^u$ by a constant factor on $R\cap R'$ \cite[Lemma 2.5]{CPZ}.

 One can similarly define a system of conditional measures $\{\mu^s_x\}$ on $V_R^s(x)$ for $x\in V_R^u(p)$;
it is then natural to ask whether the conditional measure $\mu_p^s$ agrees with the measure $\tilde\mu$ on $V_R^s(p)$ from \eqref{gibbs-split2}, and we will return to this question in \S\ref{sliding} when we discuss \emph{absolute continuity} and the \emph{Hopf argument}.

\subsection{SRB measures}\label{sec:physical}
Now suppose that $\Lambda$ is a hyperbolic attractor and hence, contains the local unstable leaf $\Vl^u(x)$ for every $x\in \Lambda$.

\begin{definition}\label{def:SRB}
Given a hyperbolic attractor $\Lambda$ for $f$ and a point $y\in \Lambda$ with a local unstable leaf $W=\Vl^u(y)$, let $m_W$ be the leaf volume on $W$ generated by the restriction of the Riemannian metric to $W$. An invariant measure $\mu$ is a \emph{Sinai--Ruelle--Bowen (SRB) measure} if for every rectangle $R\subset \Lambda$ with $\mu(R)>0$, the conditional measures $\mu_x^u$ are absolutely continuous with respect to the leaf volumes $m_x^u$ for $\tilde\mu$-almost every $x$.
\end{definition}

One of the major goals in the study of systems with some hyperbolicity is to construct SRB measures.  In the uniformly hyperbolic setting, this was done by Sinai, Ruelle, and Bowen.

\begin{theorem}[\cite{yS68,Bow,dR76}]\label{srb}
Let $\Lambda$ be a topologically transitive hyperbolic attractor for a $C^{1+\alpha}$ diffeomorphism $f$.  Then there is a unique SRB measure for $f|_\Lambda$.
\end{theorem}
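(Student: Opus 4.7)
The plan is to produce the SRB measure via the geometric recipe sketched in the introduction. Fix $y\in\Lambda$, let $W=\Vl^u(y)$, and let $\bar m_W$ denote leaf volume normalized so that $\bar m_W(W)=1$. Form the averaged pushforwards
\[
\mu_n = \frac{1}{n}\sum_{k=0}^{n-1} f^k_* \bar m_W.
\]
Since $\Lambda$ is an attractor containing every local unstable manifold, each $\mu_n$ is supported on $\Lambda$, so weak$^*$ compactness of $\M(\Lambda)$ yields a subsequential limit $\mu$; a standard Krylov--Bogolyubov argument shows $\mu\in\M(f,\Lambda)$.

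The next step is to verify the SRB property. By iterating \eqref{eqn:volume-scaling}, the measure $f^n_*\bar m_W$ is absolutely continuous with respect to the leaf volume on $f^n(W)$, with density proportional to $\rho_n(z) = \exp\bigl(\sum_{j=1}^n \phigeo(f^{-j}(z))\bigr)$, where $\phigeo(x)=-\log|\det Df|_{E^u(x)}|$. H\"older continuity of $\phigeo$ combined with the backward exponential contraction of unstable leaves (item \ref{leaves-contract} of Proposition \ref{prop:local-mfds}) gives the standard \emph{bounded distortion} estimate: for all $n\ge 0$ and all $z,z'$ lying on the same local unstable leaf inside a fixed rectangle, the ratio $\rho_n(z)/\rho_n(z')$ is bounded above and below by a constant independent of $n$. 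Working inside a rectangle $R$ with $\mu(R)>0$ and disintegrating as in \eqref{gibbs-split2}, this uniform comparability passes to the weak$^*$ limit and forces the conditional measures $\mu^u_x$ of $\mu$ to be absolutely continuous with respect to $m^u_x$; hence $\mu$ is SRB.

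For uniqueness I would exploit absolute continuity of the stable holonomy, a standard feature of $C^{1+\alpha}$ hyperbolic dynamics that the paper will recall in \S\ref{sliding}. Any ergodic SRB measure $\mu_i$ has a basin $B_{\mu_i}$ that is saturated by local unstable manifolds and meets $\mu_i$-almost every unstable leaf in a set of full leaf volume; sliding along stable holonomies propagates this positivity to a set of positive Lebesgue measure in a trapping region $U\supset\Lambda$. If two distinct ergodic SRB measures existed, their basins would be disjoint saturated sets of positive volume, and topological transitivity of $f|_\Lambda$ would force them to intersect, a contradiction. An ergodic decomposition argument then reduces the general case to the ergodic one.

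The main obstacle is controlling the densities $\rho_n$ uniformly in $n$ so that absolute continuity of the conditionals survives the weak$^*$ limit. H\"older continuity of $\phigeo$ and backward exponential contraction along unstable leaves imply that the Birkhoff sums $S_n\phigeo$ are comparable on nearby backward orbits, but converting this pointwise estimate into a genuine disintegration statement for $\mu$ inside an arbitrary rectangle $R$ requires matching the geometry of the ever-expanding iterates $f^n(W)$ to the local product structure of $R$. This is precisely where the assumption that $\Lambda$ is a hyperbolic attractor, so that $f^n(W)\subset\Lambda$ for every $n\ge 0$, is crucial, and where the bulk of the technical work lies.
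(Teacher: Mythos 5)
Your existence argument is essentially the geometric construction that the paper itself outlines in \S\ref{sec:ref}: average the pushforwards of leaf volume, control the densities $\rho_n$ by the bounded distortion estimate \eqref{eqn:bdd-dist}, and pass the uniform two-sided bounds to the conditional measures of a weak* limit via a refining sequence of finite partitions of a rectangle (Proposition \ref{prop:conditionals}); you correctly identify that the real technical work is in matching the iterates $f^n(W)$ to the product structure of $R$. For uniqueness, however, you take a genuinely different route from the one the paper follows in \S\ref{sliding} and in Theorem \ref{thm:main}: there, one shows the constructed measure is ergodic by the Hopf argument (Proposition \ref{prop:hopf}), satisfies the Gibbs property, and is therefore the unique equilibrium state for $\phigeo$ by expansivity and Proposition \ref{prop:unique}, with uniqueness of the SRB measure then following from the Ledrappier--Strelcyn/Ledrappier characterization of SRB measures as equilibrium states of $\phigeo$. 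Your basin argument avoids both that characterization and Bowen's uniqueness lemma, which is a real simplification, but it requires absolute continuity of the stable \emph{foliation} with respect to volume (Anosov--Sinai), a strictly stronger input than the leaf-to-leaf holonomy statement the paper relies on, as the footnote to Definition \ref{def:abs-cts} emphasizes.

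Two points in the uniqueness paragraph need repair. First, the basin \eqref{eqn:basin} is defined by \emph{forward} time averages, so it is saturated by stable leaves, not unstable ones; the correct statement is that $B_{\mu_i}$ is a union of stable leaves and meets $\mu_i$-a.e.\ unstable leaf in a set of full leaf volume. Second, and more seriously, the concluding inference ``disjoint saturated sets of positive volume, plus transitivity of $f|_\Lambda$, forces them to intersect'' is not valid as stated: two disjoint sets of positive volume are no contradiction, and transitivity on the (typically zero-volume) set $\Lambda$ says nothing directly about positive-volume subsets of the trapping region. The mechanism that actually produces the contradiction is the following. Choose unstable leaves $W_1,W_2$ on which $B_{\mu_1}$, respectively $B_{\mu_2}$, has full leaf volume. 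By transitivity the iterates $f^n(W_1)$ come arbitrarily close to $W_2$, and by invariance $B_{\mu_1}$ still has full leaf volume on $f^n(W_1)$. The stable holonomy from a piece of $f^n(W_1)$ onto a piece of $W_2$ maps $B_{\mu_1}$ into itself (stable saturation) and is absolutely continuous with respect to leaf volume, so $B_{\mu_1}$ acquires positive leaf volume in $W_2$, contradicting disjointness of the basins. Without this step the uniqueness proof is incomplete.
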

As suggested by the discussion in \S\ref{sec:motivators}, it is not hard to show that SRB measures are \emph{physical} in the sense of Definition \ref{def:physical}.  In fact, one can prove that for hyperbolic attractors, SRB measures are the \emph{only} physical measures.\footnote{An example due to Bowen and Katok \cite[\S0.3]{aK80} shows that when $\Lambda$ is not an attractor, it can support a physical measure that is not SRB.}

In addition to this physicality property, it was shown in \cite{yS68,dR76} that the SRB measure $\mu$ has the property that
\begin{equation}\label{eqn:push-volume}
\mu = \lim_{n\to\infty} \frac 1n \sum_{k=0}^{n-1} f_*^k m|_U,
\end{equation}
where $m|_U$ is normalized volume on the trapping region $U \supset \Lambda$.\footnote{In fact, they proved the stronger property that $f_*^n m|_U \to \mu$.}  In \cite{PS82}, this idea was used in order to \emph{construct} SRB measures\footnote{More precisely, \cite{PS82} considered the partially hyperbolic setting and used this approach to construct invariant measures that are absolutely continuous along unstable leaves; SRB measures are a special case of this when the center bundle is trivial.} with $m|_U$ replaced by leaf volume $m_{\Vl^u(x)}$.  We refer to this as the ``geometric construction'' of SRB measures, and will return to it when we discuss our main results.  First, though, we observe that the original constructions of SRB measures followed a different approach and used mathematical tools borrowed from statistical physics, as we discuss in the next section.

\subsection{Equilibrium states}\label{sec:es}
It turns out that it is possible to relate the absolute continuity requirement in Definition \ref{def:SRB} to a variational problem. The Margulis--Ruelle inequality \cite{dR78} (see also \cite[\S9.3.2]{BP13}) states that for any invariant Borel measure $\mu$ supported on a hyperbolic set $\Lambda$,\footnote{There is a more general version of this inequality that holds without the assumption that $\mu$ is supported on a hyperbolic set, but it requires the notion of \emph{Lyapunov exponents}, which are beyond the scope of this paper.} we have the following upper bound for the Kolmogorov--Sinai entropy:
\begin{equation}\label{eqn:Margulis-Ruelle}
h_\mu(f) \leq \int \log|\det Df|_{E^u(x)}| \,d\mu.
\end{equation} 
Recall that $h_\mu(f)$ can be interpreted as the average asymptotic rate at which information is gained if we observe a stochastic process distributed according to $\mu$;  \eqref{eqn:Margulis-Ruelle} says that this rate can never exceed the average rate of expansion in the unstable direction. 

Pesin's entropy formula \cite{yP77} states that equality holds in \eqref{eqn:Margulis-Ruelle} if $\mu$ is absolutely continuous with respect to volume. In fact, Ledrappier and Strelcyn proved that it is sufficient for $\mu$ to have conditional measures on local unstable manifolds that are absolutely continuous with respect to leaf volume \cite{LS82}, and Ledrappier proved that this condition is also necessary \cite{fL84}. In other words, equality holds in \eqref{eqn:Margulis-Ruelle} if and only if $\mu$ is an SRB measure. 

Since every hyperbolic attractor $\Lambda$ has an SRB measure, we conclude that the function $\phigeo(x)=-\log|\det Df|_{E^u(x)}|$ has the property that
\[
\sup_{\mu\in\M(f,\Lambda)} \bigg( h_\mu(f) + \int \phigeo\,d\mu\bigg) = 0,
\]
and the SRB measure for $f$ is the unique measure achieving the supremum, as claimed in \S\ref{sec:physical-es}.  More generally, we have the following definition.

\begin{definition}\label{def:equilibrium}
Let $\ph\colon M\to\RR$ be a continuous function, which we call a  \emph{potential}. An \emph{equilbrium state} (or \emph{equilibrium measure}) for $\ph$ is a measure $\mu$ achieving the supremum
\begin{equation}\label{eqn:vp}
\sup_{\mu\in\M(f,\Lambda)} \bigg( h_\mu(f) + \int \ph\,d\mu\bigg).
\end{equation}
\end{definition}

Thus SRB measures are equilibrium states for the \emph{geometric potential} 
$\phigeo$, which is H\"older continuous on every hyperbolic set as long as $f$ is $C^{1+\alpha}$, by \eqref{holder}.  This means that existence and uniqueness of SRB measures is a special case of the following classical result.

\begin{theorem}[\cite{Sin,Bow,Rue}]\label{thm:es}
Let $\Lambda$ be a locally maximal hyperbolic set for a $C^{1+\alpha}$ diffeomorphism $f$ and $\ph\colon\Lambda\to\RR$ a H\"older continuous potential. Assume that 
$f|_\Lambda$ is topologically transitive. Then there exists a unique equilibrium state $\mu$ for $\ph$.
\end{theorem}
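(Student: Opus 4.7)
The plan is to prove the theorem via the geometric approach outlined in the introduction, avoiding both Markov partitions and the specification property. First, by the spectral decomposition we may pass to a topologically mixing component and reduce to that case without loss of generality. The strategy then splits into two parts: construct an equilibrium state as a weak* limit of averaged pushforwards of a reference measure on local unstable leaves, and then establish uniqueness by exploiting the local product structure of hyperbolic sets.

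For existence, I would first construct, for each local unstable leaf $W = \Vl^u(x)$, the reference measure $m_x^\C$ via the Carathéodory dimension structure determined by $\ph$ and a small scale $r>0$. The construction proceeds by analogy with Hausdorff measure: one covers subsets of $W$ by dynamically defined balls (intersections of Bowen balls $B_n(y,r)$ with $W$) and assigns weight $e^{-nP(\ph)+S_n\ph(y)}$ to each such ball, then infimizes over covers and takes a limit as the scale shrinks. The key properties to verify are finiteness and nontriviality of $m_x^\C$, compatibility on overlapping leaves, and the scaling rule \eqref{eqn:scaling}. Once these are in hand, I would fix a leaf $W$ and define
\begin{equation*}
\mu_N := \frac{1}{N}\sum_{k=0}^{N-1} \frac{f_*^k m_x^\C}{m_x^\C(W)},
\end{equation*}
then take a weak* limit $\mu = \lim_{j\to\infty} \mu_{N_j}$; the scaling rule forces $\mu$ to be $f$-invariant (the boundary terms telescope up to a factor decaying like $1/N$). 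To show $\mu$ is an equilibrium state, I would establish a Gibbs-type bound on $m_x^\C$ for Bowen balls of the form $m_x^\C(B_n(y,r)\cap W)\asymp e^{-nP(\ph)+S_n\ph(y)}$ using Hölder continuity of $\ph$ and bounded distortion along unstable leaves. Combining this with a Katok-style entropy estimate, or by computing $h_\mu(f)+\int\ph\,d\mu$ directly from the Gibbs bound via the Brin--Katok entropy formula, yields $h_\mu(f)+\int\ph\,d\mu = P(\ph)$.

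For uniqueness, the Gibbs property of $m_x^\C$ combined with the characterization of equilibrium states (via the analogue of Ledrappier's theorem adapted to this setting) should show that the conditional measures of any equilibrium state on local unstable leaves are absolutely continuous with respect to $m_x^\C$, with densities uniformly bounded above and below. Given two such equilibrium states $\mu_1,\mu_2$, this leaf-wise absolute continuity together with the local product structure of rectangles and an absolute continuity property of the stable holonomy would feed into a Hopf-type argument: if $\mu_1$ and $\mu_2$ were mutually singular ergodic equilibrium states, their basins of attraction would give disjoint sets saturated by both stable and unstable leaves, and topological transitivity/mixing would force a contradiction. Convexity of the equilibrium state set together with ergodicity of the extreme points then yields uniqueness.

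The main obstacle, and the heart of the paper's contribution, is the construction and analysis of $m_x^\C$ without recourse to Markov partitions. Showing that the Carathéodory construction yields a finite, nonzero measure with the exact scaling rule \eqref{eqn:scaling} rather than merely a Gibbs inequality requires delicate estimates, because one cannot directly appeal to a symbolic coding of the dynamics. A secondary technical obstacle will be transferring the leaf-wise Gibbs property into a full statement about conditional measures of the limit measure $\mu$, which requires controlling how leaf measures glue together across rectangles and verifying that the weak* limit interacts correctly with the disintegration \eqref{gibbs-split2}.
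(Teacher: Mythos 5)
The existence half of your proposal follows the paper's own geometric route essentially step for step: construct $m_x^\C$ from the Carath\'eodory structure, verify finiteness, nontriviality and the scaling rule \eqref{eqn:scaling}, average the pushforwards, and use the $u$-Gibbs property of the reference measures to identify the weak* limit as an equilibrium state. That part is sound (the paper computes $h_\mu(f)+\int\ph\,d\mu=P(\ph)$ from the Gibbs property by partition-sum estimates rather than Brin--Katok, but either works).

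The gap is in your uniqueness argument. You appeal to ``the analogue of Ledrappier's theorem adapted to this setting'' to conclude that the unstable conditionals of \emph{any} equilibrium state for $\ph$ are absolutely continuous with respect to $m_x^\C$. No such theorem is available off the shelf for a general H\"older potential: Ledrappier's characterization applies to the geometric potential (SRB measures) and rests on the Margulis--Ruelle inequality and Pesin's entropy formula, which compare entropy to leaf \emph{volume}; there is no ready analogue with $m_x^\C$ in place of leaf volume. Proving such a characterization for arbitrary equilibrium states of $\ph$ is itself a substantial theorem --- the paper only establishes the corresponding statement (Theorem \ref{thm:main}\ref{3}) for the \emph{constructed} limit measure, not for a hypothetical competitor. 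Without it, your Hopf-type contradiction never starts: two ergodic equilibrium states could a priori be mutually singular with conditionals unrelated to $m_x^\C$. The paper closes uniqueness by a different and much cheaper route: it shows the constructed measure is an ergodic invariant measure satisfying the Gibbs property \eqref{gibbs2} (by integrating the $u$-Gibbs bounds over a rectangle using the holonomy estimates of Theorem \ref{thm:holonomy}), and then invokes Bowen's criterion (Proposition \ref{prop:unique}): for an expansive homeomorphism, an ergodic invariant Gibbs measure is automatically the unique equilibrium state. You should replace the Ledrappier step with exactly this --- verify expansivity of $f|_\Lambda$, upgrade the leafwise $u$-Gibbs property to the global Gibbs property for $\mu$, and conclude by Proposition \ref{prop:unique}.
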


In \S\S\ref{sec:Markov}--\ref{gibbs-property} we briefly recall two classical proofs of Theorem \ref{thm:es} which are based on either symbolic representation of $f|_\Lambda$ as a topological Markov chain or on the specification property of $f|_\Lambda$.  In \S\ref{sec:car} we introduce the tools that we will use to provide a new proof which is based on some constructions in geometric measure theory. 

The function $\mu\mapsto h_\mu(f) + \int\ph\,d\mu$ is affine; it follows that the unique equilibrium state $\mu$ must be ergodic, otherwise every element of its ergodic decomposition would also be an equilibrium state.
In fact, it has many \emph{good} ergodic properties: one can prove that it is Bernoulli, has exponential decay of correlations, and satisfies the Central Limit Theorem \cite{Bow}.

The fundamental result of thermodynamic formalism is the \emph{variational principle}, which establishes that the supremum in \eqref{eqn:vp} is equal to the \emph{topological pressure} of $\ph$, which can be defined as follows without reference to invariant measures.

\begin{definition}\label{def:pressure}
Given an integer $n\geq 0$, consider the \emph{dynamical metric of order $n$}
\begin{equation}\label{eqn:dn}
d_n(x,y) = \max\{d(f^kx,f^ky) : 0\leq k< n\}
\end{equation}
and the associated \emph{Bowen balls} $B_n(x,r) = \{y : d_n(x,y) < r\}$ for each $r>0$. We say that $E\subset\Lambda$ is \emph{$(n,r)$-separated} if $d_n(x,y)\geq r$ for all
$x\neq y\in E$, and that $E$ is \emph{$(n,r)$-spanning} for $X\subset \Lambda$ if
$X\subset \bigcup_{x\in E} B_n(x,r)$.

Writing $S_n\ph(x)=\sum_{k=0}^{n-1} \ph(f^k x)$ for the $n$th \emph{Birkhoff sum} along the orbit of $x$, the \emph{partition sum} of $\ph$ on a set $X\subset \Lambda$ refers to one of the following two quantities:
\begin{align*}
\Zspan_n(X,\ph,r) &:= \inf \Big\{ \sum_{x\in E} e^{S_n\ph(x)} : E\subset X
\text{ is $(n,r)$-spanning for }X\Big\}, \\
\Zsep_n(X,\ph,r) &:= \sup \Big\{\sum_{x\in E} e^{S_n\ph(x)} : E\subset X \text{
is $(n,r)$-separated}\Big\}.
\end{align*}
Then the topological pressure is given by\footnote{The fact that the limits coincide is given by an elementary argument comparing $\Zspan_n$ and $\Zsep_n$. In fact, the limit in $r$ can be removed due to expansivity of $f|_\Lambda$; see Definition \ref{def:expansive} and 
\cite[Theorem 9.6]{pW82}.}
\begin{equation}\label{eqn:pressure}
P(\ph) = \lim_{r\to 0} \ulim_{n\to\infty} \frac 1n \log \Zspan_n(\Lambda,\ph,r)
= \lim_{r\to 0} \ulim_{n\to\infty} \frac 1n \log\Zsep_n(\Lambda,\ph,r).
\end{equation}
(One gets the same value if $\ulim$ is replaced by $\llim$.)
\end{definition}

It is worth noting at this point that the definition of $P(\ph)$ bears a certain similarity to the definition of box dimension: one covers $X$ by a collection of balls at a given scale, associates a certain weight to this collection, and then computes the growth rate of this weight as the balls in the cover are refined.  The difference is that here the refinement is done dynamically rather than statically, and different balls carry different weight according to the ergodic sum $S_n\ph(x)$; we will discuss this point further in \S\ref{sec:car} and \S\ref{car-struc}.  When $\ph=0$, we obtain the \emph{topological entropy} $\htop(f) = P(0)$, which gives the asymptotic growth rate of the cardinality of an $(n,r)$-spanning or $(n,r)$-separated set; one can show that this is also the asymptotic growth rate of the number of periodic orbits in $\L$ of length $n$.

Now the variational principle \cite[Theorem 9.10]{pW82} can be stated as follows:
\begin{equation}\label{eqn:vp2}
P(\ph) = \sup_{\mu\in\M(f,\Lambda)} \bigg( h_\mu(f) + \int \ph\,d\mu\bigg).
\end{equation}
The discussion at the beginning of this section shows that $P(\phigeo)=0$.  Given a potential $\ph$, we see that an equilibrium state for $\ph$ is an invariant measure $\mu_\ph$ such that $P(\ph) = h_{\mu_\ph}(f) + \int\ph\,d\mu_\ph$. For the potential function $\ph=0$, the equilibrium state $\mu_\ph=\mu_0$ is the \emph{measure of maximal entropy}.

\subsection{First proof of Theorem \ref{thm:es}: symbolic representation of $f|_{\Lambda}$}\label{sec:Markov}
The original proof of Theorem \ref{thm:es} uses a symbolic coding of the dynamics on 
$\Lambda$.  If $\Lambda = X_1 \cup \cdots \cup X_p$, then we say that a bi-infinite sequence $\omega \in \Omega_p := \{1,\dots, p\}^\mathbb{Z}$ \emph{codes the orbit} of $x\in \Lambda$ if $f^n x \in X_{\omega_n}$ for all $n\in \mathbb{Z}$.  When $\Lambda$ is a locally maximal hyperbolic set, it is not hard to show that every $\omega\in \Omega_p$ codes the orbit of at most one $x\in \Lambda$; if such an $x$ exists, call it $\pi(\omega)$.  Let $\Sigma\subset \Omega_p$ be the set of all sequences that code the orbit of some 
$x\in \Lambda$; then $\Sigma$ is invariant under the shift map 
$\sigma\colon \Omega_p\to \Omega_p$ defined by $(\sigma\omega)_n = \omega_{n+1}$, and the map $\pi\colon \Sigma\to \Lambda$ is a \emph{topological semi-conjugacy}, meaning that the following diagram commutes.
\begin{equation}\label{eqn:tsc}
\xymatrix{
\Sigma \ar[r]^\sigma \ar[d]^\pi & \Sigma \ar[d]^\pi \\
\Lambda \ar[r]^f & \Lambda
}
\end{equation}
If the sets $X_i$ overlap, then the coding map may fail to be injective. One would like to produce a coding space $\Sigma$ with a `nice' structure for which the failure of injectivity is `small'. This was accomplished by Sinai when $\Lambda=M$ \cite{yS68} and by Bowen in the general setting \cite{rB70,Bow}; they showed that things can be arranged so that 
$\Sigma$ is defined by a nearest-neighbor condition, with the failure of injectivity confined to sets that are invisible from the point of view of equilibrium states.

\begin{theorem}[\cite{Bow}]\label{thm:Markov}
If $\Lambda$ is a locally maximal hyperbolic set for a diffeomorphism $f$, then there is a \emph{Markov partition} $\Lambda = X_1\cup \cdots \cup X_p$ such that 
each $X_i$ is a rectangle that is the closure of its interior  (in the induced topology on $\Lambda$) and
the corresponding coding space $\Sigma$ is a \emph{topological Markov chain}
\begin{equation}\label{eqn:tmc}
\Sigma = \{ \omega \in \Omega_p : f(\Int X_{\omega_n}) \cap \Int X_{\omega_{n+1}} \neq\emptyset \text{ for all } n\in \mathbb{Z} \},
\end{equation}
and there is a set $\Lambda' \subset \Lambda$ such that 
\begin{enumerate}[label=\textup{(\arabic{*})}]
\item every $x\in \Lambda'$ has a unique preimage under $\pi$, and
\item if $\mu\in \mathcal{M}(f,\Lambda)$ is an equilibrium state for a H\"older continuous potential $\ph\colon \Lambda\to \RR$, then $\mu(\Lambda')=1$.
\end{enumerate}
\end{theorem}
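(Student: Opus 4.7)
The plan is to follow Bowen's approach in three stages: cover $\Lambda$ by a finite collection of small rectangles, refine the cover dynamically until the Markov compatibility condition holds, and then verify that equilibrium states for H\"older potentials assign zero measure to the boundary of the partition.

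First I fix $\delta$ much smaller than the Smale-bracket scale $\eps$, take a $\delta$-dense finite set $\{p_1,\dots, p_N\}\subset \Lambda$, and around each $p_i$ build a rectangle $R_i = [V^u_{R_i}(p_i), V^s_{R_i}(p_i)]$ of diameter at most $\delta$; after a mild shrinking and closure operation I can assume each $R_i$ equals the closure of its interior in $\Lambda$. To enforce the Markov property I then iterate a rectangularization procedure: whenever $f(\Int R_i) \cap \Int R_j \neq \emptyset$, I cut $R_i$ along the curves $f^{-1}(V^u_{R_j}(y))\cap R_i$ and reassemble the pieces using the Smale bracket so that $f$ sends each unstable slice of the refined piece onto a full unstable slice of $R_j$. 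Performing this for all relevant pairs $(i,j)$ yields a finite partition $\{X_1,\dots, X_p\}$ with the Markov property: when $f(\Int X_i) \cap \Int X_j \neq \emptyset$ and $x\in X_i\cap f^{-1}(X_j)$ one has $f(V^u_{X_i}(x))\supset V^u_{X_j}(f(x))$ and dually for the stable direction.

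With the partition in hand, the projection $\pi(\omega) := \bigcap_{n\in \mathbb{Z}} f^{-n}(X_{\omega_n})$ is well-defined (the intersection shrinks to a point by hyperbolic contraction), continuous, and surjective; it conjugates $\sigma$ with $f$, and the coding space $\Sigma$ takes the nearest-neighbor form \eqref{eqn:tmc} exactly by the Markov property. Any point whose full orbit avoids $\partial X := \bigcup_i \partial X_i$ has its itinerary uniquely determined, so the set $\Lambda' := \Lambda\setminus \bigcup_{n\in \mathbb{Z}}f^{-n}(\partial X)$ satisfies the first conclusion of the theorem.

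I expect the main obstacle to be the rectangularization step: one must show that the refined pieces are again rectangles that are closures of their interiors and that only finitely many refinement rounds are needed. This is the technical heart of Bowen's construction and relies on the uniform H\"older regularity of the local invariant manifolds from Proposition \ref{prop:local-mfds}(\ref{unif-holder}) together with careful control over how stable and unstable slices meet inside small rectangles. For the remaining measure-theoretic statement, splitting $\partial X = \partial^s X \cup \partial^u X$ into its stable and unstable portions, the Markov property gives $f(\partial^s X)\subset \partial^s X$ and $f^{-1}(\partial^u X)\subset \partial^u X$, so by $f$-invariance of $\mu$ it suffices to check $\mu(\partial X) = 0$; this follows from the Gibbs property of the equilibrium state, since $\partial^\sigma X$ is a finite union of compact subsets of local $\sigma$-leaves and hence has strictly smaller topological pressure than $\Lambda$.
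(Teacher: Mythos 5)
The paper does not prove this theorem; it is quoted from Bowen, so your proposal has to be measured against the classical argument. Your overall architecture (fine cover by rectangles, refinement to enforce the Markov property, coding, and a measure-zero boundary argument) is the right shape, but there are two genuine problems.

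First, a structural caveat on the construction. The picture of ``cutting $R_i$ along the curves $f^{-1}(V^u_{R_j}(y))\cap R_i$'' is really only faithful when the unstable slices are one-dimensional and $\Lambda$ is an attractor or all of $M$. For a general locally maximal hyperbolic set the sets $V^u_R(x)\cap\Lambda$ are Cantor-like and the boundaries are not curves; Bowen's construction avoids this by first coding \emph{pseudo-orbits} via the shadowing lemma, taking the rectangles to be images of cylinder sets, and only then performing a combinatorial refinement. You flag the rectangularization as the technical heart, which is fair, but be aware that the naive cutting procedure is not what one actually carries out.

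Second, and more seriously, the final paragraph has a gap. The set $\partial^s X_i$ is \emph{not} a finite union of compact subsets of local stable leaves: writing $X_i=[A_i,B_i]$, the stable boundary is $[\partial A_i,B_i]$, a union of stable slices indexed by $\partial A_i$, and $\partial A_i$ (the boundary of $A_i$ relative to $\Vl^u(p_i)\cap\Lambda$) is uncountable in general (already for the solenoid and the horseshoe). Consequently the asserted inequality $P_{\partial^\sigma X}(\ph)<P(\ph)$ is neither obvious nor justified as stated; even for a genuine union of stable leaves it requires knowing $P(\ph)>\sup_\mu\int\ph\,d\mu$, which is itself a nontrivial fact. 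The standard route is different: from $f(\partial^sX)\subset\partial^sX$ one gets that $Y:=\bigcup_{n\ge0}f^{-n}(\partial^sX)$ is an increasing union with $f^{-1}Y=Y$ and $\mu(Y)=\mu(\partial^sX)$ by invariance; ergodicity of the equilibrium state forces $\mu(Y)\in\{0,1\}$, and $\mu(Y)=1$ would make the closed, nowhere dense set $\partial^sX$ (here you use that each $X_i$ is the closure of its interior) carry full measure, contradicting the fact that an equilibrium state for a H\"older potential is fully supported. Both ergodicity and full support are exactly the Gibbs-type inputs you have available, but you must invoke them in this form rather than through an unproven pressure gap. A further bookkeeping point: conclusion (2) concerns an \emph{arbitrary} equilibrium state for $\ph$, so one must either first lift it to the symbolic system and identify it with the Gibbs measure there, or establish ergodicity and full support for it directly; your sketch silently assumes these properties.
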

With this result in hand, the problem of existence and uniqueness of equilibrium states can be transferred from the smooth system $(\Lambda,f,\ph)$ to the symbolic system $(\Sigma,\sigma,\tilde\ph := \ph\circ \pi)$, where tools from statistical mechanics and Gibbs distributions can be used; we recall here the most important ideas, referring to \cite{Sin,Bow,Rue} for full details.

Give $\Sigma$ the metric $d(\omega,\omega') = 2^{-\min\{|n|:\omega_n \neq\omega_n'\}}$, so that two sequences are close if they agree on a long interval of integers around the origin.  The coding map $\pi$ is H\"older continuous in this metric, so $\tilde\ph$ is also H\"older continuous. Fixing $r\in (\frac 12,1)$, the Bowen balls associated to the dynamical metric \eqref{eqn:dn} are given by
\begin{equation}\label{eqn:Cn}
B_n(\omega,r) = \{ \omega'\in \Sigma : \omega'_i=\omega_i\text{ for all }0\leq i < n\}
=: C_n(\omega),
\end{equation}
which we call the \emph{$n$-cylinder} of $\omega$.
Let $E_n\subset \Sigma$ contain exactly one point from each $n$-cylinder; then $E_n$ is both $(n,r)$-spanning and $(n,r)$-separated, and writing $Z_n(\Sigma,\tilde\ph) = \sum_{\omega\in E_n} e^{S_n\tilde\ph(\omega)}$, one obtains
\[
P(\ph) = P(\tilde\ph) = \lim_{n\to\infty} \frac 1n \log Z_n(\Sigma,\tilde\ph).
\]
To understand what an equilibrium state for $\tilde\ph$ should look like, recall that
the Kolmogorov--Sinai entropy of a $\sigma$-invariant measure $\tilde\mu$ is defined as
\[
h_{\tilde\mu}(\sigma) = \lim_{n\to\infty} \frac 1n \sum_{\omega\in E_n} -\tilde\mu(C_n(\omega)) \log \tilde\mu(C_n(\omega)).
\]
A short exercise using invariance of $\tilde\mu$ and continuity of $\ph$ shows that
\[
\int\tilde\ph\,d\tilde\mu 
= \lim_{n\to\infty} \sum_{\omega\in E_n} \int_{C_n(\omega)} \tilde\ph\,d\tilde\mu
= \lim_{n\to\infty} \sum_{\omega\in E_n} \tilde\mu(C_n(\omega)) \cdot\frac 1n S_n\tilde\ph(\omega).
\]
Thus maximizing $h_{\tilde\mu}(\sigma) + \int\tilde\ph\,d\tilde\mu$ involves maximizing the limit of a sequence of expressions of the form
$F(p_1,\dots,p_N)=\sum_{i=1}^N p_i (-\log p_i + a_i)$,
where $N(n)=\#E_n$ and $p_i,a_i$ are given by $\tilde\mu(C_n(\omega))$ and $S_n \tilde\ph(\omega)$, so that $p_i\geq 0$ and $\sum_i p_i = 1$.  It is a calculus exercise to show that with $a_i$ fixed, $F$ achieves its maximum value of $F = \log \sum e^{a_j} = \log Z_n(\Sigma,\tilde\ph) \approx nP(\ph)$ when
$p_i = e^{a_i} / \sum_j e^{a_j} \approx e^{a_i} e^{-nP(\ph)}$.

This last relation can be rewritten as 
$\tilde\mu(C_n(\omega))\approx e^{S_n\tilde\ph(\omega)} e^{-nP(\ph)}$. With this in mind, one can use tools from functional analysis and statistical mechanics to show that there is a $\sigma$-invariant ergodic measure $\tilde\mu$ on $\Sigma$ which has the \emph{Gibbs property} with respect to $\tilde\ph$: there is $Q>0$ such that for every $\omega\in\Sigma$ and $n\in\NN$, we have
\begin{equation}\label{gibbs1}
Q^{-1}\leq \frac{\tilde\mu(C_n(\omega))}{\exp(-P(\ph)n+S_n\tilde\ph(\omega))}\leq Q,
\end{equation}
By a general result that we will state momentarily, this is enough to guarantee that 
$\tilde\mu$ is the unique equilibrium state for $(\Sigma,\sigma,\tilde\ph)$, and hence by Theorem \ref{thm:Markov}, its projection $\mu = \pi_* \tilde\mu$ is the unique equilibrium state for $(\Lambda,f,\ph)$.  

To formulate the link between the Gibbs property and equilibrium states, we first recall the following more general definitions.
\begin{definition}\label{def:expansive}
Given a compact metric space $X$, a homeomorphism $f\colon X\to X$ is said to be \emph{expansive} if there is $\eps>0$ such that every $x\neq y\in X$ have $d(f^nx,f^ny)>\eps$ for some $n\in \mathbb{Z}$.  
\end{definition}

\begin{definition}\label{def:Gibbs}
A measure $\mu$ on $X$ is a \emph{Gibbs measure} for $\ph\colon X\to \RR$ if for every small $r>0$ there is $Q=Q(r)>0$ such that for every $x\in X$ and $n\in \NN$, we have
\begin{equation}\label{gibbs2}
Q^{-1}\leq \frac{\mu(B_n(x,r))}{\exp(-P(\varphi)n+S_n\ph(x))}\leq Q.
\end{equation}
\end{definition}
Note that $\sigma\colon \Sigma\to\Sigma$ is expansive, and that \eqref{gibbs1} implies \eqref{gibbs2} in this symbolic setting.  Then uniqueness of the equilibrium state is a consequence of the following general result.

\begin{proposition}[{\cite[Lemma 8]{rB745}}]\label{prop:unique}
If $X$ is a compact metric space, $f\colon X\to X$ is an expansive homeomorphism, and $\mu$ is an ergodic $f$-invariant Gibbs measure for $\ph\colon X\to \RR$, then $\mu$ is the unique equilibrium state for $\ph$.
\end{proposition}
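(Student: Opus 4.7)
Plan. The proposition has two parts: (i) the ergodic Gibbs measure $\mu$ achieves the supremum $P(\ph)$ in \eqref{eqn:vp2}, and (ii) it is the unique measure that does.

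For (i), the plan is to combine the Brin--Katok local entropy formula, the Gibbs bound, and Birkhoff's ergodic theorem. Brin--Katok gives, for $\mu$-a.e.~$x$ and all sufficiently small $r>0$,
\begin{equation*}
h_\mu(f) = \lim_{n\to\infty} -\tfrac{1}{n}\log \mu(B_n(x,r));
\end{equation*}
substituting the Gibbs bound \eqref{gibbs2} rewrites the right-hand side as $P(\ph) - \tfrac{1}{n}S_n\ph(x) + O(n^{-1})$, and Birkhoff (using ergodicity of $\mu$) gives $\tfrac{1}{n}S_n\ph(x) \to \int\ph\,d\mu$ $\mu$-a.e. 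Combining, $h_\mu(f) + \int\ph\,d\mu = P(\ph)$.

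For (ii), let $\nu$ be any other equilibrium state for $\ph$. Since the functional $\rho\mapsto h_\rho(f) + \int\ph\,d\rho$ is affine on $\M(f,X)$, every ergodic component of $\nu$ is also an equilibrium state, so we may assume $\nu$ ergodic. Suppose for contradiction $\nu\neq\mu$; then $\nu\perp\mu$ since distinct ergodic measures are mutually singular. Applying Brin--Katok and Birkhoff to $\nu$, together with the equilibrium identity for $\nu$, yields
\begin{equation*}
\lim_{n\to\infty} \tfrac{1}{n}\log\bigl[\nu(B_n(x,r))\, e^{nP(\ph) - S_n\ph(x)}\bigr] = 0 \quad\text{for $\nu$-a.e.~}x,
\end{equation*}
which combined with the uniform Gibbs bound \eqref{gibbs2} for $\mu$ gives $\log\bigl[\nu(B_n(x,r))/\mu(B_n(x,r))\bigr] = o(n)$ for $\nu$-a.e.~$x$. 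The plan is to upgrade this pointwise subexponential comparison to a genuine identity using expansivity: fix a finite partition $\xi$ of $X$ with $\diam\xi$ below the expansivity constant, so $\xi$ is generating and $h_\nu(f,\xi) = h_\nu(f)$; each atom of $\xi_0^{n-1} := \bigvee_{k=0}^{n-1} f^{-k}\xi$ is contained in a Bowen ball $B_n(x,\diam\xi)$. Combining Jensen's inequality for the entropy functional, the Gibbs upper bound for $\mu$, and bounded multiplicity of the associated Bowen-ball cover yields
\begin{equation*}
H_\nu(\xi_0^{n-1}) + \int S_n\ph\,d\nu \le nP(\ph) + n\omega_\ph(\diam\xi) + O(1),
\end{equation*}
where $\omega_\ph$ denotes the modulus of continuity of $\ph$. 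The equilibrium hypothesis forces this inequality to be asymptotically tight, which (by the equality case of Jensen) upgrades the pointwise asymptotic comparison to the uniform comparison $\nu(P) \asymp \mu(P)$ on atoms $P$ of $\xi_0^{n-1}$. Because $\xi$ is generating this gives $\nu \ll \mu$ with bounded density; the Radon--Nikodym derivative $d\nu/d\mu$ is $f$-invariant (both measures being invariant) and hence constant $\mu$-a.e.\ by ergodicity, so $\nu = \mu$, contradicting $\nu \perp \mu$.

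The main obstacle is extracting the uniform-in-$n$ comparison $\nu(P) \asymp \mu(P)$ from the asymptotic tightness of Jensen's inequality; the natural information-theoretic bound only gives a Kullback--Leibler divergence of order $o(n)$ at refinement level $n$ between $(\nu(P))_P$ and the Gibbs-weighted distribution $e^{S_n\ph(x_P)}/\sum_{P'} e^{S_n\ph(x_{P'})}$, and promoting this to a multiplicative comparison valid uniformly in $n$ requires careful bookkeeping. Expansivity enters crucially, both to make $\xi$ a generating partition so that $h_\nu(f,\xi) = h_\nu(f)$, and to bound the Bowen-ball covering multiplicity independently of $n$.
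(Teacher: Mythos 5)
First, note that the paper itself does not prove Proposition \ref{prop:unique}: it is quoted from Bowen \cite{rB745}, so the comparison below is with the standard argument rather than with a proof in the text. Your first half is essentially correct: combining the Gibbs bound \eqref{gibbs2} with the Brin--Katok formula and Birkhoff's theorem does give $h_\mu(f)+\int\ph\,d\mu=P(\ph)$, and the reduction of uniqueness to the case of an ergodic competitor $\nu$, via affineness of the free energy over the ergodic decomposition, is standard. (A small caveat: for a \emph{fixed} $r>0$ Brin--Katok only controls the local entropy after letting $r\to0$; for $\mu$ the Gibbs bound itself removes the limit in $r$, but when you apply the same identity to $\nu$ in step (ii) you are already invoking an unproved ``local entropy saturates at the expansivity scale'' lemma.)

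The genuine gap is the step you describe as requiring ``careful bookkeeping'': it is not bookkeeping, it is the entire content of the theorem, and the implication you rely on is false as stated. From $h_\nu(f)+\int\ph\,d\nu=P(\ph)$, asymptotic tightness of the concavity (Jensen) inequality yields only that the Kullback--Leibler divergence at level $n$ between $(\nu(P))_{P\in\xi_0^{n-1}}$ and the Gibbs weights is $o(n)$. A level-$n$ divergence of order $o(n)$ along a refining sequence of partitions does \emph{not} force absolute continuity, let alone a Radon--Nikodym density bounded uniformly in $n$: Kakutani's dichotomy already gives product measures $\nu=\prod_k(\tfrac12+\eps_k,\tfrac12-\eps_k)$ and $\mu=\prod_k(\tfrac12,\tfrac12)$ with $\eps_k=k^{-1/4}$, for which the level-$n$ divergence is of order $\sqrt{n}=o(n)$ while $\nu\perp\mu$. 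So the purely measure-theoretic bridge you propose does not exist, and nothing in your argument supplies the additional dynamical input. A secondary unproved assertion is that expansivity bounds the multiplicity of the cover of $X$ by the Bowen balls containing the atoms of $\xi_0^{n-1}$ uniformly in $n$; this too is a nontrivial lemma, not a consequence of the definition.

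Bowen's actual argument runs in the opposite direction and never attempts to show $\nu\ll\mu$. Assuming $\nu$ ergodic and $\nu\perp\mu$, one chooses a set on which $\nu$ concentrates while $\mu$ is small, splits the concavity estimate for $H_\nu(\xi_0^{n-1})+\int S_n\ph\,d\nu$ over the atoms meeting that set and the remaining atoms, and uses the \emph{lower} Gibbs bound for $\mu$ (the only half of \eqref{gibbs2} that is needed here) to convert the partition sum over any collection of atoms into $C e^{nP(\ph)}$ times the $\mu$-measure of a slightly enlarged union of Bowen balls. Mutual singularity then produces a strict deficit $h_\nu(f)+\int\ph\,d\nu<P(\ph)$, contradicting the assumption that $\nu$ is an equilibrium state. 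If you want to complete your write-up, you should replace the ``upgrade KL to bounded density'' step by this direct use of singularity as quantitative input to the concavity estimate.
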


We remark that \eqref{gibbs2} does not require the Gibbs measure to be invariant.  Indeed, one can separate the problem of finding a unique equilibrium state into two parts: first construct a Gibbs measure without worrying about whether or not it is invariant, then find a density function (bounded away from $0$ and $\infty$) that produces an ergodic invariant Gibbs measure, which is the unique equilibrium state by Proposition \ref{prop:unique}.

\subsection{Second proof of Theorem \ref{thm:es}: specification property}\label{gibbs-property}
There is another proof of Theorem \ref{thm:es} which is due to Bowen \cite{rB745} and avoids symbolic dynamics.  Instead, it  uses the fact that $f$ satisfies the following \emph{specification property} on a topologically mixing locally maximal hyperbolic set 
$\Lambda$: for each $\delta>0$ there is an integer $p(\delta)$ such that given any 
points $x_1,\ldots ,x_n\in \Lambda$ and intervals of integers
$I_1,\ldots I_n \subset[a,b]$ with $d(I_i,I_j)\geq p(\delta)$ for $i\neq j$, there is a point 
$x\in \Lambda$ with $f^{b-a+p(\delta)}(x)=x$ and $d(f^k(x),f^k(x_i))<\delta$ for $k\in I_i$.
Roughly speaking, $f$ satisfies specification if for every finite number of orbit segments one can find a single periodic orbit that consecutively approximates each segment with a fixed precision $\delta>0$, and such that transition times are bounded by $p(\delta)$. This property allows one to study some topological and statistical properties of $f$ by only analyzing periodic orbits.

The construction of the Gibbs measure $\tilde\mu$ in the first approach uses eigendata of a certain linear operator acting on an appropriately chosen Banach space of functions on $\Lambda$.  The specification property allows one to use a more elementary construction and obtain a Gibbs measure on $\Lambda$ as a weak* limit point of measures supported on periodic orbits.  Let $\Per_n := \{x\in \Lambda : f^nx=x\}$ and 
$\Zper_n(\ph) := \sum_{x\in \Per_n} e^{S_n\ph(x)}$ (compare this to $\Zspan_n$ and 
$\Zsep_n$ from Definition \ref{def:pressure}); then consider the $f$-invariant Borel probability measures given by
\[
\mu_{n}:= \frac{1}{\Zper_n(\ph)} \sum_{x\in \Per_n}e^{S_n\ph(x)}\delta_x,
\]
where $\delta_x$ is the atomic probability measure with $\delta_x(\{x\})=1$.

Using some counting estimates on the partition sums $\Zper_n(\ph)$ provided by the specification property, one can prove that every weak* limit point $\mu$ of the sequence $\mu_n$ is an ergodic Gibbs measure as in \eqref{gibbs2}.  Then Proposition \ref{prop:unique} shows that $\mu$ is the unique equilibrium state for $\ph$; \emph{a posteriori}, the sequence $\mu_n$ converges.

\section{Description of reference measures and main results}\label{sec:main-result}

In this section, and especially in Theorem \ref{thm:main}, we describe 
a new proof of Theorem \ref{thm:es} that avoids Markov partitions and the specification property, and instead mimics the geometric construction of SRB measures in \S\ref{sec:physical}.  Given a locally maximal hyperbolic set $\Lambda$ and a H\"older continuous potential $\ph\colon\Lambda\to\RR$, we define for each $x\in\Lambda$ a measure $m_x^\C$ on $X = \Vl^u(x) \cap \Lambda$ such that the sequence of measures
\begin{equation}\label{u-evolution}
\mu_n=\frac1n\sum_{k=0}^{n-1} \frac{f_*^k m_x^\C}{m_x^\C(X)}
\end{equation} 
converges to the unique equilibrium state.\footnote{To be more precise we need first to extend $m_x^\C$ from $X$ to a measure on $\Lambda$ by assigning to any Borel set $E\subset\Lambda$ the value $m_x^\C(E\cap X)$. We shall always assume that in \eqref{u-evolution} $m_x^\C$ is extended in this way.}  In \S\ref{sec:ref}, we give some motivation for the properties we require the reference measures $m_x^\C$ to have; then in \S\ref{sec:car} we explain our construction of these measures.
In \S\ref{sec:statements} we state our main results establishing the properties of $m_x^\C$, including how these measures can be used to prove Theorem \ref{thm:es}.  
In \S\ref{sec:proofs} we outline the proofs of these results, referring to \cite{CPZ} for complete details and for proofs of various technical lemmas.

\subsection{Conditional measures as reference measures}
We start with the observation that if we were already in possession of the equilibrium state $\mu$, then the conditional measures of $\mu$ would immediately define reference measures for which the construction just described produces $\mu$.  Indeed, suppose $X$ is a compact topological space, $f\colon X\to X$ a continuous map, and 
$\mu$ a finite $f$-invariant ergodic Borel probability measure on $X$. Given $Y\subset X$ with $\mu(Y)>0$ and a measurable partition $\xi$ of $Y$; let $\tilde\mu$ be the corresponding factor-measure on $\tilde{Y}:=Y/\xi$, and $\{ \mu_W^\xi : W\in \xi\}$ the conditional measures on partition elements.\footnote{Note that $\xi$ is not assumed to have any dynamical significance; in particular it need not be a partition into local unstable leaves, although this is the most relevant partition for our purposes.}  We prove the following result in \S\ref{sec:push-cond}; it follows from an even more general result in ergodic theory that we state below as Proposition \ref{prop:generic-pts}.

\begin{theorem}\label{push-cond1}
For $\tilde\mu$-almost every $W\in \xi$, any probability measure $\nu$ on $W$ such that $\nu \ll \mu_W^\xi$ has the property that $\nu_n:= \frac{1}{n}\sum_{k=0}^{n-1}f^k_{*}\nu$ converges in the weak$^*$ topology to the measure $\mu$.
\end{theorem}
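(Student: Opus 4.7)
The plan is to reduce the statement to the Birkhoff ergodic theorem combined with the disintegration identity. Let $\{\ph_j\}_{j\in\NN}\subset C(X)$ be a countable dense subset in the supremum norm (available since $X$ is compact metrizable). By Birkhoff's ergodic theorem applied to each $\ph_j$, the set
\[
G := \Big\{x\in X : \tfrac{1}{n}\sum_{k=0}^{n-1}\ph_j(f^k x)\xrightarrow{n\to\infty}\int\ph_j\,d\mu \text{ for all } j\in\NN\Big\}
\]
satisfies $\mu(G)=1$. A standard $3\eps$-argument shows that for every $x\in G$ and every $\ph\in C(X)$ the ergodic averages $\tfrac{1}{n}\sum_{k=0}^{n-1}\ph(f^k x)$ converge to $\int\ph\,d\mu$.

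Next, I would use the disintegration formula for the conditional measures of $\mu$ with respect to $\xi$. Writing $\mu_Y$ for the normalized restriction of $\mu$ to $Y$, the identity
\[
1=\mu_Y(G\cap Y)=\int_{\widetilde Y}\mu_W^\xi(G\cap W)\,d\tilde\mu(W)
\]
forces $\mu_W^\xi(G\cap W)=1$ for $\tilde\mu$-almost every $W\in\xi$. Fix such a $W$, and let $\nu$ be any probability measure on $W$ with $\nu\ll \mu_W^\xi$. Then absolute continuity gives $\nu(G\cap W)=1$, so $\nu$-a.e.\ point is generic in the sense above.

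Finally, to conclude weak$^*$ convergence of $\nu_n=\tfrac{1}{n}\sum_{k=0}^{n-1}f^k_*\nu$, fix $\ph\in C(X)$ and apply the bounded convergence theorem to the sequence of functions $g_n(x):=\tfrac{1}{n}\sum_{k=0}^{n-1}\ph(f^k x)$, which is uniformly bounded by $\|\ph\|_\infty$ and converges $\nu$-a.e.\ to the constant $\int\ph\,d\mu$. This yields
\[
\int\ph\,d\nu_n=\int g_n\,d\nu\xrightarrow{n\to\infty}\int\ph\,d\mu,
\]
giving weak$^*$ convergence $\nu_n\to\mu$.

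I do not anticipate a serious obstacle here; the only point that requires some care is the measurability needed to apply the disintegration — specifically, verifying that $G$ is Borel (which follows since each $\ph_j$ is continuous and limsup/liminf of Borel functions are Borel) so that the slice-measurability of $W\mapsto \mu_W^\xi(G\cap W)$ holds in the disintegration theorem. The rest is routine.
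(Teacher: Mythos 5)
Your proof is correct and follows essentially the same route as the paper: Birkhoff generic points, the disintegration identity to show the generic set has full $\mu_W^\xi$-measure on $\tilde\mu$-almost every leaf, and absolute continuity to transfer this to $\nu$. The only difference is cosmetic: the paper isolates the final step as a separate proposition (valid even for non-ergodic $\mu$, assuming $\nu$ gives full weight to the generic points) and proves it with an explicit Egorov-type truncation, whereas you invoke the bounded convergence theorem directly; both are valid.
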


Of course, Theorem \ref{push-cond1} is not much help in finding the equilibrium state $\mu$, because we need to know $\mu$ to obtain the conditional measures $\mu_W^\xi$.  We must construct the reference measure $m_x^\C$ independently, without using any knowledge of existence of equilibrium states.  Once we have done this, we will eventually show that $m_x^\C$ is equivalent to the conditional measure of the constructed equilibrium state, so our approach not only allows us to develop a new way of constructing equilibrium states, but also describes their conditional measures. 

\subsection{Conditions to be satisfied by reference measures}
\label{sec:ref}
To motivate the properties that our reference measures must have, we first consider the specific case when $\L$ is an attractor and outline the steps in constructing SRB measures.
\begin{enumerate}
\item Given a local unstable leaf $W=\Vl^u(x)$ through $x\in\L$ and $n\ge 0$, the image $W_n:=f^n(W)$ is contained in the union of local leaves $\Vl^u(y_i)$ for some points $y_1,\dots,y_s\in W_n$, and leaf volume $m_x^u$ is pushed forward to a measure $f_*^n m_x^u$ such that $(f_*^n m_x^u)|_{\Vl^u(y_i)} \ll m_{y_i}^u$ for each $i$.
\item Each $\mu_n:=\frac1n\sum_{k=0}^{n-1}f_*^k m_x^u$ can be written as a convex combination of measures with the form $\rho_n^i \,dm_{y_i}^u$ for some  functions $\rho_n^i \colon\Vl^u(y_i)\to [0,\infty)$ that are uniformly bounded away from $0$ and $\infty$.
\item To show that any limit measure $\mu = \lim_{j\to\infty} \mu_{n_j}$ has absolutely continuous conditional measures on unstable leaves, first observe that given a rectangle $R$, the partition $\xi$ into local unstable leaves can be approximated by a refining sequence of finite partitions $\xi_\ell$, and the conditional measures $\mu_x^\xi$ are the weak* limits  of the conditional measures $\mu_x^{\xi_\ell}$ as $\ell\to\infty$.
\item 
The bounds on the density functions $\rho_n^i$ allow us to control the conditional measures $\mu_x^{\xi_\ell}$, and hence to control $\mu_x^\xi$ as well; in particular, these measures  are absolutely continuous with respect to leaf volume, and thus $\mu$ is an SRB measure.
\end{enumerate}

Now we describe two crucial properties of the leaf volumes $m_x^u$, which we will eventually need to mimic with our reference measures $m_x^\C$.  The first of these already appeared in \eqref{eqn:volume-scaling}, and describes how $m_x^u$ scales under iteration by $f$; this will let us conclude that the SRB measure $\mu$ is an equilibrium state for $\phigeo$.  The second property describes how $m_x^u$ behaves when we `slide along stable leaves' via a \emph{holonomy map}; this issue has so far been ignored in our discussion, but plays a key role in the proof that the SRB measure $\mu$ is ergodic, and hence is the \emph{unique} equilibrium state for $\phigeo$.

\subsubsection{Scaling under iteration}
Given any $x,y\in \Lambda$ and $A\subset f(\Vl^u(x)) \cap \Vl^u(y)$, we have
\[
f_* m_x^u(A) = m_x^u(f^{-1}A) = \int_{A} |\det f^{-1}|_{E^u(z)}| \,dm_y^u(z)
= \int_{A} e^{\phigeo(f^{-1}z)}\,dm_y^u(z)
\]
and so the Radon--Nikodym derivative comparing the family of measures $m_x^u$ to their pushforwards is given in terms of the geometric potential:
\begin{equation}\label{eqn:rn}
\frac{d(f_* m_x^u)}{dm_{y}^u}(z) = e^{\phigeo(f^{-1}(z))}.
\end{equation}
Iterating this, we see that given $A\subset f^n(\Vl^u(x)) \cap \Vl^u(y)$ we have
\begin{equation}\label{eqn:push-n}
f_*^n m_x^u(A)=m_x^u(f^{-n}A) =  \int_{A} e^{\sum_{k=1}^n \phigeo(f^{-k}z)}\,dm_{y}^u(z).
\end{equation}
By H\"older continuity of $\phigeo$ and the fact that $f^{-1}$ contracts uniformly along each $\Vl^u$, one can easily show that
\begin{equation}\label{eqn:bdd-dist}
\Big| \sum_{k=1}^n \big(\phigeo(f^{-k} z_1) - \phigeo(f^{-k}z_2)\big)\Big| \leq Q_u \text{ for all } z_1,z_2 \in \Vl^u(y),
\end{equation}
where $Q_u$ is a constant independent of $y \in \Lambda$, $z_1,z_2\in \Vl^u(y)$, and 
$n\in \NN$ (see Lemma \ref{lem:bowen} for details).  Together with \eqref{eqn:push-n}, this gives
\begin{equation}\label{eqn:bdd-dist-2}
e^{-Q_u} \leq \frac{m_x^u(f^{-n} A)}{e^{S_n\phigeo(x)} m_{y}^u(A)} \leq e^{Q_u}
\text{ for all } A\subset f^n(\Vl^u(x)) \cap \Vl^u(y).
\end{equation}
In particular, writing $B^u(y,r) = B(y,r) \cap \Vl^u(y)$, we observe that for each $r>0$ there is a constant $K=K(r)>0$ such that $m_y^u(B^u(y,r)) \in [K^{-1},K]$ for all $y\in \Lambda$, and deduce from \eqref{eqn:bdd-dist-2} that the \emph{$u$-Bowen ball}
\[
B_n^u(x,r) := \{z\in \Vl^u(x) : d_n(x,z) < r\} = f^{-n} B^u(f^n x,r)
\]
admits the following leaf volume estimate:
\begin{equation}\label{eqn:mW-gibbs}
K^{-1} e^{-Q_u} \le\frac{m^u_x(B_n^u(x,r))}{e^{S_n\phigeo(x)}}\le K e^{Q_u}.
\end{equation}

\begin{definition}\label{def:u-gibbs}
Consider a family of measures $\{\mu_x : x\in \Lambda\}$ such that $\mu_x$ is supported on $\Vl^u(x)$. We say that this family has the \emph{$u$-Gibbs property}\footnote{Note that this is a different notion than the idea of \emph{$u$-Gibbs state} from \cite{PS82}.} with respect to the potential function $\ph\colon \Lambda\to \RR$ if there is $Q_1=Q_1(r)>0$ such that for all $x\in \Lambda$ and $n\in \NN$, we have
\begin{equation}\label{eqn:u-gibbs}
Q_1^{-1} \leq \frac{\mu_x(B_n^u(x,r))}{e^{-nP(\ph) + S_n\ph(x)}} \leq Q_1.
\end{equation}
\end{definition}
In particular, \eqref{eqn:mW-gibbs} says that $m_x^u$ has the \emph{$u$-Gibbs property} with respect to the potential function $\phigeo$. Since the SRB measure $\mu$ constructed above has conditional measures that are given by multiplying the leaf volumes $m_x^u$ by `nice' density functions, one can use \eqref{eqn:mW-gibbs} to ensure that
the conditional measures of $\mu$ also have the $u$-Gibbs property; integrating these conditional measures gives the Gibbs property for $\mu$, and then some straightforward estimates involving $\Zspan_n(\L,\ph,r)$ demonstrate that $\mu$ is an equilibrium state corresponding to the function $\phigeo$.

\subsubsection{Sliding along stable leaves}\label{sliding}

It remains, then, to show that $\mu$ is the \emph{unique} equilibrium state for $\phigeo$; this will follow from Proposition \ref{prop:unique} if $\mu$ is proved to be ergodic.  To establish ergodicity we use the \emph{Hopf argument}, which goes back to E.\ Hopf's work on geodesic flow over surfaces \cite{eH39}.  The first step is to observe that if $\mu$ is any invariant measure, then by Birkhoff's ergodic theorem,\footnote{This is a more general version of the ergodic theorem than the one we mentioned in \S\ref{sec:hta}; this version applies even when 
$\mu$ is not ergodic, but does not require that the limits in \eqref{eqn:BET} are equal to $\int\psi\,d\mu$; instead, one obtains $\int \overline\psi\,d\mu = \int\psi\,d\mu$, which implies the earlier version in the case when $\overline\psi$ is constant $\mu$-a.e.} for every $\psi\in L^1(\mu)$, the forward and backward ergodic averages exist and agree for $\mu$-a.e.\ $x$:
\begin{equation}\label{eqn:BET}
\lim_{n\to\infty} \frac 1n \sum_{k=0}^{n-1} \psi(f^k x) =
\lim_{n\to\infty} \frac 1n \sum_{k=0}^{n-1} \psi(f^{-k} x).
\end{equation}
Let $\mathcal{B}\subset \L$ be the set of points where the limits in \eqref{eqn:BET} exist and agree for every continuous $\psi\colon \L\to \RR$; such points are called \emph{Birkhoff regular}.  For each $x\in \mathcal{B}$, write $\bpsi(x)$ for the common value of these limits; note that $\bpsi$ is defined $\mu$-a.e. It is not hard to prove that $\mu$ is ergodic if and only if the function $\bpsi\colon \mathcal{B}\to\RR$ is constant $\mu$-a.e.\ for every continuous $\psi\colon \L\to \RR$. By topological transitivity and the fact that $\bpsi \circ f=\bpsi$ on $\mathcal{B}$, one obtains the following standard result, whose proof we omit.

\begin{lemma}\label{lem:ergodic-condition}
An $f$-invariant measure $\mu$ is ergodic if and only if for every continuous $\psi\colon \L\to\RR$ and every rectangle $R\subset \L$, the function $\bpsi\colon \mathcal{B}\cap R \to\RR$ is constant $\mu$-a.e.
\end{lemma}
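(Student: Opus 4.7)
The forward direction is immediate from Birkhoff's ergodic theorem: if $\mu$ is ergodic, then for any continuous $\psi$, $\bpsi=\int\psi\,d\mu$ $\mu$-a.e.\ on $\L$, so in particular $\bpsi$ is $\mu$-a.e.\ constant on $\mathcal{B}\cap R$ for every rectangle $R$.

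For the backward direction, my plan is to upgrade \emph{local} $\mu$-a.e.\ constancy on each rectangle to \emph{global} $\mu$-a.e.\ constancy of $\bpsi$ on $\mathcal{B}$, for every continuous $\psi$. Once that is done, ergodicity follows by a standard argument: pick a countable dense family $\{\psi_k\}\subset C(\L)$, apply global constancy to each $\psi_k$, and use the ergodic decomposition $\mu=\int\nu\,d\zeta(\nu)$ together with the fact that $\bpsi_k(x)=\int\psi_k\,d\pi(x)$ $\mu$-a.e.\ to conclude that $\pi$ is $\mu$-a.e.\ equal to a single ergodic measure, forcing $\mu$ itself to be that measure.

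For the upgrade from local to global constancy, first cover $\L$ by finitely many closed rectangles $R_1,\dots,R_N$ having nonempty relative interior in $\L$ (possible by local product structure, since every point lies in the relative interior of a small rectangle $[A,B]$, together with compactness of $\L$). Discard any $R_k$ with $\mu(R_k)=0$; on each remaining rectangle, let $c_k=c_k(\psi)$ be the $\mu$-a.e.\ value of $\bpsi$ on $\mathcal{B}\cap R_k$, and let $G_k\subset \mathcal{B}\cap R_k$ be the corresponding full-$\mu$-measure ``good'' subset. The aim is to show $c_i=c_j$ for all surviving $i,j$. For each pair $(i,j)$, topological transitivity of $f|_\L$ produces $n=n(i,j)\ge 0$ with $U_{ij}:=\Int(R_i)\cap f^{-n}(\Int(R_j))\ne\emptyset$. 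The key claim is that $\mu(U_{ij})>0$; granting this, since $\mu$ is $f$-invariant, the set $U_{ij}\cap G_i\cap f^{-n}(G_j)$ differs from $U_{ij}$ by a $\mu$-null set, hence is nonempty. Any $x$ in it satisfies $\bpsi(x)=c_i$ and $\bpsi(f^n x)=c_j$, and since $\bpsi\circ f=\bpsi$ on $\mathcal{B}$ we conclude $c_i=c_j$.

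The main obstacle is verifying $\mu(U_{ij})>0$, since topological transitivity alone yields only that $U_{ij}$ is a nonempty open subset of $\L$, not that it has positive $\mu$-measure. This holds automatically when $\mu$ has full support on $\L$, which is the relevant case for the Gibbs-type measures this paper constructs; in that case nonempty open implies positive measure and the argument closes. In full generality one reduces to this case via the ergodic decomposition: if distinct ergodic components $\nu_1,\nu_2$ of $\mu$ had supports $S_1,S_2$ with $S_1\cap S_2\ne\emptyset$, a small rectangle around a point of $S_1\cap S_2$ would carry positive mass for both components, so $\bpsi$ would take two distinct values on $\mu$-positive subsets, contradicting the hypothesis; the complementary possibility that $S_1,S_2$ are disjoint closed invariant sets covering $\supp\mu$ in a way incompatible with topological transitivity rules out nonergodicity as well. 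This last dichotomy is where I expect the argument to be most delicate, and where topological transitivity does real work.
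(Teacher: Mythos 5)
Your forward direction is correct, and the core of your backward direction --- cover $\L$ by finitely many rectangles with nonempty relative interior, let $c_k$ be the $\mu$-a.e.\ value of $\bpsi$ on $R_k$, and chain the constants together using transitivity and $\bpsi\circ f=\bpsi$ --- is the right mechanism; you have also correctly located the crux in the claim $\mu(U_{ij})>0$. (The paper explicitly omits its proof of this lemma, so there is no argument of record to compare yours against.)

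The gap is in your last paragraph, and it cannot be repaired as written: the assertion that disjoint closed invariant supports are ``incompatible with topological transitivity'' is false. Transitivity forbids splitting $\L$ into two disjoint \emph{open} invariant sets; it says nothing about disjoint \emph{closed} invariant proper subsets, of which a transitive hyperbolic set has many (any two distinct periodic orbits, for instance). This is not a removable difficulty, because the backward implication of the lemma as literally stated fails for general invariant measures. Take $\L$ to be Smale's horseshoe (conjugate to the full $2$-shift, hence transitive), let $p$ and $q$ be its two fixed points, which lie in the two different vertical strips and hence at distance exceeding the scale $\eps$ at which the bracket $[\cdot,\cdot]$ --- and therefore every rectangle --- is defined, and set $\mu=\tfrac12(\delta_p+\delta_q)$. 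Every rectangle meets $\{p,q\}$ in at most one point, so $\mu|_R$ is either zero or a single point mass, and $\bpsi$ is trivially constant $\mu$-a.e.\ on $\mathcal{B}\cap R$ for every rectangle $R$ and every continuous $\psi$; yet $\mu$ is not ergodic. The lemma therefore carries an implicit standing hypothesis that $\mu$ gives positive weight to every nonempty relatively open subset of $\L$ (which holds for the equilibrium states to which Proposition \ref{prop:hopf} is ultimately applied). Under that hypothesis $\mu(U_{ij})>0$ is immediate because $U_{ij}$ is nonempty and open, and your chaining argument, together with the standard passage from ``$\bpsi$ is $\mu$-a.e.\ constant for every continuous $\psi$'' to ergodicity via the ergodic decomposition, is complete. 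State that hypothesis and use it; do not try to derive it from transitivity.
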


Now comes the central idea of the Hopf argument: given $\psi\in C(\L)$, if $\bpsi(x)$ exists then a short argument using the left-hand side of \eqref{eqn:BET} gives $\bpsi(y)=\bpsi(x)$ for all $y\in \mathcal{B} \cap V_R^s(x)$.  Similarly, $\bpsi$ is constant on $\mathcal{B}\cap V_R^u(x)$ using the right-hand side of \eqref{eqn:BET}.  

We want to conclude the proof of ergodicity by saying something like the following: \emph{``Since $\mathcal{B}$ has full measure in $R$, it has full measure in almost every stable and unstable leaf in $R$; thus there is $p\in R$ such that $\mathcal{B}_p:=\bigcup_{x\in \mathcal{B}\cap V_R^u(p)}\mathcal{B}\cap V_R^s(x)$ has full measure in $R$, and by the previous paragraph, $\bpsi$ is constant on $\mathcal{B}_p$, so Lemma \ref{lem:ergodic-condition} applies.''}

There is a subtlety involved in making this step rigorous.  To begin with, the term ``full measure'' is used in two different ways: ``$\mathcal{B}$ has full measure in $R$'' means that its complement $\mathcal{B}^c = R\setminus \mathcal{B}$ has $\mu(R\setminus \mathcal{B})=0$, while ``$\mathcal{B}$ has full measure in the stable leaf $V_R^s(x)$'' means that $\mu_x^s(\mathcal{B}^c)=0$, where $\mu_x^s$ is the conditional measure of $\mu$ along the stable leaf.  Using the analogue of \eqref{gibbs-split}--\eqref{gibbs-split2} for the decomposition into stable leaves, we have
\begin{equation}\label{eqn:conditional2}
\begin{aligned}
\mu(\mathcal{B}^c) = \int_R \mu_x^s(\mathcal{B}^c) \,d\mu(x)
= \int_{V_R^u(p)} \mu_x^s(\mathcal{B}^c) \,d\tilde\mu_p(x),
\end{aligned}
\end{equation}
where $\tilde\mu_p$ is the measure on $V_R^u(p)$ defined by 
\[
\tilde\mu_p(A) = \mu\big(\bigcup_{x\in A} V_R^s(x)\big).
\]
Let $\mathcal{B}' = \{x\in R : \mu_x^s(\mathcal{B}^c)=0\}$.  It follows that
\[
0 = \mu(\mathcal{B}^c) = \int_R \mu_x^s(\mathcal{B}^c)\,d\mu(x)
= \int_{R\setminus \mathcal{B}_0} \mu_x^s(\mathcal{B}^c)\,d\mu(x),
\]
and since $\mu_x^s(\mathcal{B}^c)>0$ for all $x\in R\setminus \mathcal{B}_0$ by definition, we conclude that $\mu(R\setminus \mathcal{B}')=0$; in other words, $\mu_x^s(\mathcal{B}^c)=0$ for $\mu$-a.e.\ $x\in R$.
A similar argument produces $\mathcal{B}'' \subset \mathcal{B}'$ such that $\mu(R\setminus \mathcal{B}'') = 0$ and $\mu_x^u(\mathcal{B}^c)=\mu_x^s(\mathcal{B}^c)=0$ for every $x\in \mathcal{B}''$.

So far, things are behaving as we expect.
Now can we conclude that $\mu(\mathcal{B}_p) = \mu(R)$ for $p\in \mathcal{B}''$, thus completing the proof of ergodicity?  Using \eqref{eqn:conditional2}, we have
\[
\mu(\mathcal{B}_p) = \int_{V_R^u(p)} \mu_x^s\bigg( \bigcup_{y\in \mathcal{B} \cap V_R^u(p)} \mathcal{B}\cap V_R^s(y)\bigg)
 \,d\tilde\mu_p(x)
\geq \int_{\mathcal{B}'' \cap V_R^u(p)} \mu_x^s(\mathcal{B}) \,d\tilde\mu_p(x)
= \tilde\mu_p(\mathcal{B}'').
\]
We would like to say that $\tilde\mu_p(\mathcal{B}'') = \tilde\mu_p(V_R^u(p)) = \mu(R)$, and conclude that $\mathcal{B}_p$ has full $\mu$-measure in $R$.  We know that $\mu_p^u(\mathcal{B}'') = \mu_p^u(V_R^u(p))$, and so the proof will be complete if the answer to the following question is ``yes''.

\begin{question*}
Are the measures $\tilde\mu_p$ and $\mu_p^u$ on $V_R^u(p)$ equivalent?
\end{question*}

Note that the measures $\mu_p^u$ are defined in terms of the foliation $V_R^u$, while the measures $\tilde\mu_p$ are defined in terms of the foliation $V_R^s$.  We can write the measures $\tilde\mu_p$ in terms of $\mu_p^u$ as follows: given $A\subset V_R^u(p)$, we have
\begin{equation}\label{eqn:mu-tilde-mu}
\tilde\mu_p(A) = \mu\bigg( \bigcup_{x\in A} V_R^s(x)\bigg)= \int_R \mu_y^u 
\{ V_R^u(y) \cap V_R^s(x) : x\in A\}\,d\mu(y).
\end{equation}
For each $p,y\in R$, consider the \emph{(stable) holonomy map} $\pi_{py} \colon V_R^u(p) \to V_R^u(y)$ defined by $\pi_{py}(x) = V_R^u(y) \cap V_R^s(x)$, which maps one unstable leaf to another by \emph{sliding along stable leaves}; see Figure \ref{fig:holonomy}.  Then \eqref{eqn:mu-tilde-mu} becomes
\[
\tilde \mu_p(A) = \int_R (\mu_y^u\circ \pi_{py})(A) \,d\mu(y).
\]
In other words, $\tilde\mu_p$ is the average of the conditional measures $\pi_{py}^* \mu_y^u = \mu_y^u \circ \pi_{py}$ taken over all $y\in R$.

\begin{figure}[htbp]
\includegraphics[width=.35\textwidth]{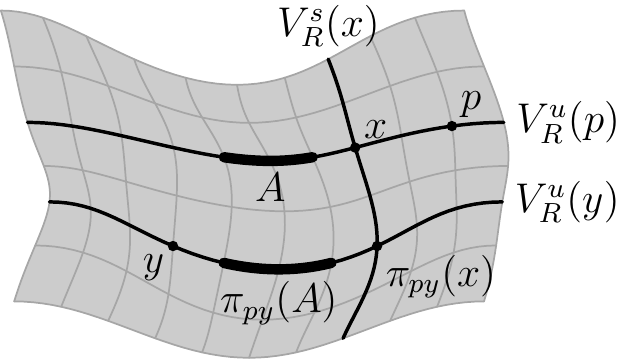}
\caption{The stable holonomy map from $V_R^u(p)$ to $V_R^u(y)$.}
\label{fig:holonomy}
\end{figure}

\begin{definition}\label{def:abs-cts}
Let $\{\nu_x\}_{x\in R}$ be a family of measures on $R$ with the property that each $\nu_x$ is supported on $V_R^u(x)$, and $\nu_x = \nu_y$ whenever $y\in V_R^u(x)$.  We say that the family $\{\nu_x\}$ is \emph{absolutely continuous\footnote{There is a related, but distinct, notion of absolute continuity of a foliation (with respect to volume), which also plays a key role in smooth ergodic theory; see \cite[\S8.6]{pes-red}.}
with respect to stable holonomies} if $\pi_{xy}^* \nu_y \ll \nu_x$ for all $x,y\in R$.
\end{definition}

The preceding arguments lead to the following result; full proofs (and further discussion) can be found in \cite{CHT16}.

\begin{proposition}\label{prop:hopf}
Let $\L$ be a topologically transitive hyperbolic set for a $C^{1+\alpha}$ diffeomorphism $f$, and let $\mu$ be an $f$-invariant measure on $\L$.  Suppose that for every rectangle $R\subset \Lambda$ with $\mu(R)>0$, the unstable conditional measures $\mu_x^u$ are absolutely continuous with respect to stable holonomies.  Then $\mu$ is ergodic.
\end{proposition}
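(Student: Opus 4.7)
The plan is to apply Lemma \ref{lem:ergodic-condition}: it suffices to show that for every continuous $\psi\colon\L\to\RR$ and every rectangle $R\subset\L$ with $\mu(R)>0$, the Birkhoff average $\bpsi$ is constant $\mu$-a.e.\ on $\mathcal{B}\cap R$. The Hopf-argument scaffolding developed in \S\ref{sliding} has already reduced this to answering the question posed there: is $\tilde\mu_p\ll\mu_p^u$ for a suitable choice of $p\in R$? Indeed, the excerpt produces a set $\mathcal{B}''\subset R$ with $\mu(R\setminus\mathcal{B}'')=0$ such that for every $p\in\mathcal{B}''$ both $\mu_p^u(\mathcal{B}^c)=0$ and $\mu_p^s(\mathcal{B}^c)=0$, and shows that $\bpsi$ is constant on $\mathcal{B}_p=\bigcup_{x\in\mathcal{B}\cap V_R^u(p)}\mathcal{B}\cap V_R^s(x)$, together with the estimate $\mu(\mathcal{B}_p)\geq\tilde\mu_p(\mathcal{B}'')$. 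Since $\mu_p^u(V_R^u(p)\setminus\mathcal{B}'')=0$, once $\tilde\mu_p\ll\mu_p^u$ is established we obtain $\tilde\mu_p(\mathcal{B}'')=\tilde\mu_p(V_R^u(p))=\mu(R)$ and hence $\mu(\mathcal{B}_p)=\mu(R)$, at which point Lemma \ref{lem:ergodic-condition} closes the argument.

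To verify $\tilde\mu_p\ll\mu_p^u$, I would combine formula \eqref{eqn:mu-tilde-mu} with the hypothesis of the proposition. Rewriting \eqref{eqn:mu-tilde-mu} as
\begin{equation*}
\tilde\mu_p(A)=\int_R (\pi_{py}^*\mu_y^u)(A)\,d\mu(y)\qquad\text{for }A\subset V_R^u(p),
\end{equation*}
the absolute-continuity assumption yields $\pi_{py}^*\mu_y^u\ll\mu_p^u$ for every $y\in R$. Hence if $\mu_p^u(A)=0$, then $\pi_{py}^*\mu_y^u(A)=0$ for all such $y$, and integrating in $y$ produces $\tilde\mu_p(A)=0$. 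This is the sole point in the proof where the hypothesis of Proposition \ref{prop:hopf} enters, and it is invoked in exactly the direction required to finish the Hopf argument.

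The main obstacle is not this final step---which is essentially a one-line computation---but the measure-theoretic bookkeeping in the Hopf argument that produces the set $\mathcal{B}''$ and ensures that it can be used uniformly for all continuous test functions. One must disintegrate $\mu$ along both the stable and unstable foliations of $R$, produce the full-measure subsets $\mathcal{B}'$ and $\mathcal{B}''$ on which the conditionals give zero mass to $\mathcal{B}^c$, handle a countable dense family in $C(\L)$ simultaneously so as to reach all continuous $\psi$ at once, and choose $p\in\mathcal{B}''$ for which every ingredient of the Hopf picture is valid. The delicate interplay between Birkhoff regularity, the leaf-by-leaf invariance of $\bpsi$ along both $V_R^s$ and $V_R^u$, and the two disintegrations is carried out in full detail in \cite{CHT16}; the substantive content of the present proposition is that the hypothesis of absolute continuity of unstable conditionals with respect to stable holonomies is precisely what is needed to bridge the gap between $\mu_p^u$-full measure and $\tilde\mu_p$-full measure, and the calculation above makes this bridge explicit.
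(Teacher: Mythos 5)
Your proposal is correct and follows exactly the route the paper itself takes: it runs the Hopf argument as set up in \S\ref{sliding}, and answers the paper's Question by combining \eqref{eqn:mu-tilde-mu} with the holonomy hypothesis of Definition \ref{def:abs-cts} to get $\tilde\mu_p\ll\mu_p^u$, which is all the argument needs. The remaining measure-theoretic bookkeeping you defer to \cite{CHT16} is precisely what the paper also defers there, so there is nothing to add.
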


\subsection{Construction of reference measures}\label{sec:car} 

In light of the previous section, our goal is to construct for each potential $\ph$ a reference measure $m_x^\C$ on each leaf $\Vl^u(x)$ satisfying a property analogous to \eqref{eqn:rn} with $\ph$ in place of $\phigeo$, together with the absolute continuity property from Definition \ref{def:abs-cts}.

From now on we fix a local unstable manifold $W = \Vl^u(x)$ of size $\tau$ and consider the set $X = W\cap \Lambda$ on which we will build our reference measure. Before treating general potentials, we start with the geometric potential $\phigeo$, and we assume that $\Lambda$ is an attractor for $f$, so that  $W\subset\Lambda$. This is necessary for the moment since the measure we build will be supported on $W \cap \Lambda$, and the support of $m_W$ is all of $W$; in the general construction below we will not require $\Lambda$ to be an attractor. For the geometric potential $\phigeo$, we know the reference measure $m_x^\C$ should be equivalent to leaf volume $m_W$ on $W$.\footnote{From Theorem \ref{push-cond1} we see that the equivalence class of the measure is the crucial thing for the geometric construction to work.} Leaf volume is equivalent to the Hausdorff measure $m_H(\cdot,\alpha)$ with $\alpha=\dim E^u$, which is defined by
\begin{equation}\label{eqn:malpha0}
m_H(Z,\alpha):=\lim_{\eps\to 0}\inf\sum_{i=1}^\infty (\diam U_i)^\alpha,
\end{equation}
where the infimum is taken over all collections $\{U_i\}$ of open sets $U_i\subset W$ with 
$\diam U_i\leq \eps$ which cover $Z\subset W$.  

We want to describe a measure that is equivalent to $m_H(\cdot,\alpha)$ but whose definition uses the dynamics of $f$. In \eqref{eqn:malpha0}, the covers used to measure $Z$ were refined geometrically by sending $\eps\to 0$.  We consider instead covers that refine \emph{dynamically}: we restrict the sets $U_i$ to be $u$-Bowen balls 
$B_n^u(x,r)=B_n(x,r)\cap W$, and refine the covers by requiring $n$ to be large rather than by requiring $r$ to be small. Note that if $U_i$ is a metric ball $B(x,\eps)$, then 
$(\diam U_i)^\alpha \approx m_W(U_i)$ up to a multiplicative factor that is bounded away from $0$ and $\infty$.  For a $u$-Bowen ball, on the other hand, \eqref{eqn:mW-gibbs} gives $m_W(B_n^u(x,r)) \approx e^{S_n\phigeo(x)}$, and so we use this quantity to compute the weight of the cover. This suggests that we should fix $r>0$ and define the measure of $Z\subset W$ by 
\begin{equation}\label{vol.car}
m_x^{\phigeo}(Z)
:=\lim_{N\to\infty}\inf\sum_{i=1}^{\infty} e^{S_{n_i}\phigeo(x_i)},
\end{equation}
where the infimum is taken over all collections $\{B^u_{n_i}(x_i,r)\}$ of $u$-Bowen balls with $x_i\in W$, $n_i\ge N$, which cover $Z$. It is relatively straightforward to derive property (\ref{eqn:rn}) from (\ref{vol.car}).

Now it is nearly apparent what the definition should be for a general potential; we want to replace $\phigeo$ with $\ph$ in \eqref{vol.car}. There is one small subtlety, though. First, Definition \ref{def:pressure} gives $P(\ph+c) = P(\ph)+c$ for $c\in \RR$. This along with the definition of equilibrium state and the variational principle \eqref{eqn:vp2} shows that adding a constant to $\ph$ does not change its equilibrium states, and thus we should also expect that $\ph$ and $\ph+c$ produce the same reference measure on $W\cap \Lambda$. For this to happen, we need to modify \eqref{vol.car} so that adding a constant to $\ph$ does not affect the value. This can be achieved by multiplying each term in the sum by $e^{-n_iP(\ph)}$; note that since $P(\phigeo)=0$ this does not modify \eqref{vol.car}. Thus we make the following definition.

\begin{definition}\label{def:main}
Let $0<r<\frac{\tau}{3}$. We define a measure on $X :=W\cap \Lambda$ by
\begin{equation}\label{car0}
m_x^\C(Z):=\lim_{N\to\infty}\inf \sum_i e^{-n_iP(\ph)} e^{S_{n_i}\varphi(x_i)} ,
\end{equation}
where the infimum is taken over all collections $\{B^u_{n_i}(x_i,r)\}$ of $u$-Bowen balls with $x_i\in X$, $n_i\ge N$, which cover $Z$, and for convenience we write 
$\C=(\ph,r)$ to keep track of the data on which the reference measure depends.
\end{definition}
Both definitions \eqref{vol.car} and \eqref{car0} are specific cases of the Carath\'eodory measure produced by a dynamically defined Carath\'eodory dimension structure, which we discuss at greater length in \S\ref{car-struc}; this is the Pesin--Pitskel' definition of topological pressure \cite{PP84} that generalized Bowen's definition of topological entropy for non-compact sets \cite{rB73}.  In particular, Proposition \ref{prop:P-same} establishes the crucial property that every local unstable leaf carries the same topological pressure as the entire set $\L$.

\subsection{Statements of main results}\label{sec:statements}

Now we state the most important properties of $m_x^\C$ and show how it can be used as a reference measure to construct the equilibrium state for $\ph$.
All results in this section are proved in detail in \cite{CPZ};\footnote{The numbering of references within \cite{CPZ} refers to the first arXiv version; it is possible that the numbering will change between this and the final published version.}
we outline the proofs in \S\ref{sec:proofs}.
Our first main result shows that the measure $m_x^\C$ is finite and nonzero.  

\begin{theorem}\label{thm:finite}{\cite[Theorem 4.2]{CPZ}}
Let $\Lambda$ be a topologically transitive locally maximal hyperbolic set for a $C^{1+\alpha}$ diffeomorphism $f$, and let $\ph\colon \Lambda\to \RR$ be H\"older continuous.  Fix $r$ as in Definition \ref{def:main}, and for each $x\in \Lambda$, let $m_x^\C$ be given by \eqref{car0}, where $\C=(\ph,r)$.  Then there is $K>0$ such that for every $x\in \Lambda$, $m_x^\C$ is a Borel measure on $\Vl^u(x) \cap \Lambda$ with $m_x^\C(\Vl^u(x)\cap \Lambda) \in [K^{-1},K]$.  If $\Vl^u(x) \cap \Vl^u(y) \cap \Lambda \neq\emptyset$, then $m_x^\C$ and $m_y^\C$ agree on the intersection.
\end{theorem}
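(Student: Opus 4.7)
The plan is to treat \eqref{car0} as a Carath\'eodory construction built on the set function $\psi(B^u_n(y,r)) := e^{-nP(\ph)+S_n\ph(y)}$ and to pin down the mass of $X := \Vl^u(x)\cap\Lambda$ from above by exhibiting cheap covers and from below by a mass-distribution principle against an auxiliary probability measure on $X$. Two analytic ingredients drive everything: bounded distortion of Birkhoff sums of $\ph$ along $u$-Bowen balls (the analogue of \eqref{eqn:bdd-dist}, via Lemma \ref{lem:bowen}), and Proposition \ref{prop:P-same}, which says $\sum_{y\in E} e^{S_n\ph(y)} \asymp e^{nP(\ph)}$ for maximal $(n,r)$-separated sets $E$ inside a single local unstable leaf, with constants uniform in the leaf.

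\textbf{Upper bound.} For each $N$, let $E_N\subset X$ be a maximal $(N,r)$-separated set. Maximality forces $\{B^u_N(y,r):y\in E_N\}$ to cover $X$; Proposition \ref{prop:P-same} provides $C_1$, uniform in $x$ and $N$, with $\sum_{y\in E_N}e^{S_N\ph(y)} \le C_1 e^{N P(\ph)}$. Hence this cover has total weight at most $C_1$, and letting $N\to\infty$ in \eqref{car0} gives $m_x^\C(X) \le C_1$.

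\textbf{Lower bound.} Put $\nu_n := \bigl(\sum_{y\in E_n} e^{S_n\ph(y)}\bigr)^{-1} \sum_{y\in E_n} e^{S_n\ph(y)} \delta_y$ on $X$, and let $\nu$ be any weak-$*$ accumulation point, which exists since $X$ is compact. The key estimate is that for some $C_2$ independent of $x$,
\[
\nu_n(B^u_m(y,r)) \le C_2\, e^{-mP(\ph)+S_m\ph(y)} \qquad \text{for all } y\in X,\ n\ge m \ge 1.
\]
To see it, split $S_n\ph(z) = S_m\ph(z)+S_{n-m}\ph(f^m z)$ for $z\in E_n\cap B^u_m(y,r)$; use the distortion bound to replace $S_m\ph(z)$ by $S_m\ph(y)$ up to a uniform factor; bound the residual sum over $f^m(E_n\cap B^u_m(y,r))$, an $(n-m,r)$-separated subset of $\Vl^u(f^m y)\cap\Lambda$, by $C' e^{(n-m)P(\ph)}$ via Proposition \ref{prop:P-same}; and divide by the denominator, which is $\ge c\, e^{nP(\ph)}$ by the matching lower half of the same proposition. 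A mild regularization (open Bowen balls of slightly larger radius) transfers the estimate from $\nu_n$ to $\nu$. For any admissible cover $\{B^u_{n_i}(x_i,r)\}$ of $X$, the mass-distribution principle then yields
\[
1 = \nu(X) \le \sum_i \nu(B^u_{n_i}(x_i,r)) \le C_2 \sum_i e^{-n_iP(\ph)+S_{n_i}\ph(x_i)},
\]
so $m_x^\C(X)\ge C_2^{-1}$. Taking $K:=\max(C_1,C_2)$ yields the two-sided bound, with uniformity in $x$ inherited from the uniform constants in Proposition \ref{prop:P-same} and the distortion estimate.

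\textbf{Borel regularity and compatibility on overlaps.} The Carath\'eodory construction yields a Borel outer measure since $u$-Bowen balls are open in the leaf topology; restriction to Borel subsets of $X$ gives the Borel measure claimed. If $z\in \Vl^u(x)\cap \Vl^u(y)\cap\Lambda$, then by uniqueness of local unstable manifolds (Proposition \ref{prop:local-mfds}) the two leaves coincide on a neighborhood of $z$, so $u$-Bowen balls with centers in that neighborhood are intrinsically the same subsets of either leaf; the classes of admissible covers in \eqref{car0} thus coincide for any Borel set contained in the overlap, and so do the resulting infima. The hardest step is the pointwise bound $\nu(B^u_m(y,r))\le C_2 e^{-mP(\ph)+S_m\ph(y)}$ uniformly in $x,y,m$: it hinges on dovetailing (i) bounded distortion of $\ph$ on $u$-Bowen balls, (ii) the fact that $f^m$ sends $B^u_m(y,r)$ essentially onto the standard ball $B^u(f^m y,r)$ in a different leaf, and (iii) two-sided pressure estimates on a single leaf (Proposition \ref{prop:P-same}). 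Once these fit together, the rest of the argument is bookkeeping.
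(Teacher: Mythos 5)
Your argument is correct in outline, and the upper bound is the same as ours: a maximal $(N,r)$-separated set in the leaf is $(N,r)$-spanning, and the uniform upper partition-sum bound caps the weight of that cover. For the lower bound, however, you take a genuinely different route. We handle the difficulty that an admissible cover in \eqref{car0} mixes Bowen balls of different orders by refining such a cover to a single $(n,r)$-spanning set with $n=\max n_i$ and invoking the lower bound $\Zspan_n \geq C^{-1}e^{nP(\ph)}$; you instead build an auxiliary probability measure $\nu$ on $X$ as a weak* limit of weighted point masses on maximal separated sets, prove a Gibbs-type upper bound $\nu(B_m^u(y,r)) \leq C_2 e^{-mP(\ph)+S_m\ph(y)}$, and conclude by the mass distribution principle. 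Both routes ultimately rest on the same two-sided leaf partition-sum estimates and bounded distortion, so neither is shorter, but yours has the advantage of producing along the way a measure that already satisfies the $u$-Gibbs property (anticipating Corollary \ref{cor:Gibbs}), at the cost of the extra separated-set bookkeeping needed to verify that $f^m(E_n \cap B_m^u(y,r))$ is $(n-m,r)$-separated and lands near $f^m(y)$. One citation slip: the estimate $\sum_{y\in E}e^{S_n\ph(y)} \asymp e^{nP(\ph)}$ on a single leaf is Proposition \ref{prop:uniform}, not Proposition \ref{prop:P-same}; the latter is the identification $\dim_C^r(X)=P(\ph)$, which is proved \emph{together with} the theorem and cannot be used as an input.

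The one step where your justification is actually wrong as stated is Borel measurability. Openness of the $u$-Bowen balls in the leaf topology is not what makes the Carath\'eodory outer measure Borel; a Method-II construction from open sets can still fail to make Borel sets measurable, and Remark \ref{rmk:not-metric} exhibits exactly this failure for the analogous construction with full Bowen balls on $\L$ (which are also open). The correct reason is that $\diam B_n^u(y,r) \leq r\lambda^n \to 0$ uniformly, so for sets $E,F$ with $d(E,F)>0$ any admissible cover of $E\cup F$ with $N$ large splits into disjoint covers of $E$ and $F$; hence $m_x^\C$ is a \emph{metric} outer measure and Proposition \ref{prop:metric-borel} applies. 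This fix is immediate, but the distinction is precisely the point of restricting to unstable leaves, so it should not be elided.
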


As described in \S\ref{sec:ref}, we need to understand how the measures $m_x^\C$ transform under (1) the dynamics of $f$ and (2) sliding along stable leaves via holonomy. For the first of these properties, the following result gives the necessary scaling property analogous to \eqref{eqn:rn}.
\begin{theorem}\label{thm:Gibbs}{\cite[Theorem 4.4]{CPZ}}
Under the hypotheses of Theorem \ref{thm:finite}, for every $x\in\Lambda$, we have 
$f^* m_{f(x)}^\C := m_{f(x)}^\C \circ f \ll m_x^\C$, with Radon--Nikodym derivative 
$e^{P(\ph) - \ph}$, so that \eqref{eqn:scaling} holds.  
\end{theorem}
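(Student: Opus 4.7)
The plan is to compare the two Carathéodory measures directly by pushing forward their defining covers through $f$. The key geometric fact is that $f$ restricts to a bijection
\[
f\colon B_n^u(x_0,r)\xrightarrow{\ \cong\ } B_{n-1}^u(f(x_0),r)
\]
for every $x_0\in\Lambda$ and every sufficiently small $r$. The inclusion $f(B_n^u(x_0,r))\subset B_{n-1}^u(f(x_0),r)$ is immediate from the definition of $d_n$, and the reverse containment follows from the uniform backward contraction on unstable leaves given by Proposition~\ref{prop:local-mfds}\ref{leaves-contract}: for $z\in B_{n-1}^u(f(x_0),r)$ one has $d(f^{-1}z,x_0)\le \lambda r<r$, and combined with the $k\ge 1$ bounds this places $f^{-1}z$ in $B_n^u(x_0,r)$.

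Under this bijection the Carathéodory weights transform in precisely the right way. The weight $e^{-nP(\ph)}e^{S_n\ph(x_0)}$ attached to $B_n^u(x_0,r)$ in \eqref{car0} and the weight $e^{-(n-1)P(\ph)}e^{S_{n-1}\ph(f(x_0))}$ attached to its image under $f$ differ by exactly the factor $e^{P(\ph)-\ph(x_0)}$, since $S_{n-1}\ph(f(x_0))=S_n\ph(x_0)-\ph(x_0)$. Moreover, iterating the contraction of $f^{-1}$ on $\Vl^u$ gives $d(y,x_0)\le \lambda^{n-1}r$ for every $y\in B_n^u(x_0,r)$, so by Hölder continuity of $\ph$,
\[
\bigl|\ph(y)-\ph(x_0)\bigr|\le C\lambda^{(n-1)\alpha}r^\alpha,
\]
with $C$ independent of $x_0$, $y$, and $n$. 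Hence the scaling factor $e^{P(\ph)-\ph(x_0)}$ is uniformly $(1+o(1))\,e^{P(\ph)-\ph(y)}$ as $n\to\infty$.

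With these two ingredients, the theorem reduces to a standard covering argument. Given measurable $A\subset\Vl^u(x)\cap\Lambda$ and $N\in\NN$, pick a near-optimal cover $\{B_{n_i}^u(x_i,r)\}$ of $A$ by $u$-Bowen balls with $n_i\ge N$ in the defining infimum for $m_x^\C(A)$ in \eqref{car0}. Its image under $f$ is a cover of $f(A)$ by $u$-Bowen balls of order $n_i-1$, whose total weight equals
\[
\sum_i e^{P(\ph)-\ph(x_i)}\,e^{-n_iP(\ph)}e^{S_{n_i}\ph(x_i)}.
\]
Replacing the factor $e^{P(\ph)-\ph(x_i)}$ by its values at arbitrary points of $B_{n_i}^u(x_i,r)$ up to a multiplicative error that vanishes as $N\to\infty$, then passing to the infimum and the limit, yields $m_{f(x)}^\C(f(A))\le \int_A e^{P(\ph)-\ph(y)}\,dm_x^\C(y)$. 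The reverse inequality is obtained symmetrically by starting from a near-optimal cover of $f(A)$ and pulling back by $f^{-1}$ on $\Vl^u(f(x))$ to a cover of $A$ by $u$-Bowen balls of order $n_j+1$; the same weight identity runs in the other direction. Combining both bounds identifies $f^*m_{f(x)}^\C$ with $e^{P(\ph)-\ph}\,m_x^\C$.

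The main obstacle I expect is coordinating two small but essential points. First, the distortion bound $|\ph(y)-\ph(x_i)|\le C\lambda^{(n_i-1)\alpha}r^\alpha$ must be \emph{uniform} over all base points and all $n_i\ge N$ so that the error can be absorbed inside the infimum in \eqref{car0} rather than accumulating across the cover; this is a Bowen-type estimate of the form \eqref{eqn:bdd-dist} restricted to its $k=0$ term. Second, since $f(A)$ may extend beyond the single local leaf $\Vl^u(f(x))$ (because $f$ expands along $\Vl^u$), the conclusion has to be interpreted using the consistency of the family $\{m_y^\C\}_{y\in\Lambda}$ on overlapping unstable leaves, which is exactly the compatibility provided by Theorem~\ref{thm:finite}. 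Neither point is conceptually deep, but both need to be handled to upgrade the cover-based inequality to a genuine Radon--Nikodym identity.
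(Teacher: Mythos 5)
Your overall strategy is the same as the paper's: compare Carath\'eodory covers directly under $f$, using the exact identity $S_{n-1}\ph(f(x_0))=S_n\ph(x_0)-\ph(x_0)$ together with the bijection $f^{-1}B_{n-1}^u(f(x_0),r)=B_n^u(x_0,r)$ (both of which you state and justify correctly). The gap is in the step ``Replacing the factor $e^{P(\ph)-\ph(x_i)}$ by its values at arbitrary points \dots yields $m_{f(x)}^\C(f(A))\le\int_A e^{P(\ph)-\ph}\,dm_x^\C$.'' Near-constancy of $\ph$ on each high-order Bowen ball tells you that $g:=e^{P(\ph)-\ph}$ is almost constant on each covering set, but it does not convert the weighted sum $\sum_i g(x_i)\,e^{-n_iP(\ph)}e^{S_{n_i}\ph(x_i)}$ into $\int_A g\,dm_x^\C$. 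For a near-optimal cover of $A$ you control only the \emph{total} weight $\sum_i e^{-n_iP(\ph)}e^{S_{n_i}\ph(x_i)}\approx m_x^\C(A)$; you have no control over how that weight is distributed among the balls relative to $m_x^\C$, so when the balls are reweighted by the non-constant function $g$ the sum can a priori drift away from $\int_A g\,dm_x^\C$. (If $g$ were constant the sum factors and there is no issue; that is exactly the case $\ph\equiv\mathrm{const}$.)

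The repair --- and what the paper actually does in \cite[\S 7.1]{CPZ} --- is to first partition the target set into finitely many Borel pieces $E_1,\dots,E_T$ on which $\ph$ (hence $g$) varies by at most $\eps$, choose near-optimal covers of each piece \emph{separately}, run your constant-factor argument on each piece to get $m_{f(x)}^\C(f(E_j))=e^{\pm\eps'}\,g(y_j)\,m_x^\C(E_j)$ for any $y_j\in E_j$, and then sum using countable additivity of the Borel measure $m_x^\C$ (Theorem \ref{thm:finite}); letting $T\to\infty$ and $\eps\to0$ gives the Radon--Nikodym identity. Your uniform distortion estimate $|\ph(y)-\ph(x_i)|\le C\lambda^{(n_i-1)\alpha}r^\alpha$ is still needed in this scheme --- it guarantees that the centers of the balls covering $E_j$ carry $g$-values close to those on $E_j$ --- but it complements the partition rather than replacing it. Your two closing remarks (uniformity of the distortion bound, and the compatibility of the measures $m_y^\C$ on overlapping leaves when $f(A)$ leaves a single local leaf) are both correct and do need to be addressed, but neither is the missing step.
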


\begin{corollary}\label{cor:Gibbs}{\cite[Corollary 4.5]{CPZ}}
Under the hypotheses of Theorem \ref{thm:finite}, the family of measures 
$\{m_x^\C\}_{x\in \Lambda}$ has the $u$-Gibbs property from Definition \ref{def:u-gibbs}. In particular, for every relatively open $U\subset \Vl^u(x) \cap \L$, we have $m_x^\C(U)>0$.
\end{corollary}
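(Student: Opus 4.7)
The plan is to derive the $u$-Gibbs bounds directly from the transformation rule of Theorem \ref{thm:Gibbs} combined with a Bowen-type bounded distortion lemma for $\ph$ on $u$-Bowen balls, and then to deduce positivity on relatively open sets as an easy consequence.

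First I would iterate Theorem \ref{thm:Gibbs} $n$ times to obtain, for every Borel $E\subset\Vl^u(x)\cap\Lambda$, the identity
\[
m_{f^n(x)}^\C(f^n(E)) \;=\; \int_E e^{nP(\ph)-S_n\ph(y)}\,dm_x^\C(y).
\]
Since $f^{-1}$ contracts $\Vl^u$ uniformly by $\lambda<1$, for $z\in\Vl^u(x)$ the maximum in $d_n(x,z)$ is realized at $k=n-1$, so $B_n^u(x,r)=\{z\in\Vl^u(x):d(f^{n-1}x,f^{n-1}z)<r\}$ and hence $f^{n-1}(B_n^u(x,r))=B^u(f^{n-1}x,r)$ (taking $r<\tau$). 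Next, for $y\in B_n^u(x,r)$ we have $d(f^k x,f^k y)\le \lambda^{n-1-k}r$, so H\"older continuity of $\ph$ gives a geometric series bound
\[
|S_n\ph(y)-S_n\ph(x)| \;\le\; |\ph|_\beta r^\beta \sum_{k=0}^{n-1} \lambda^{\beta(n-1-k)} \;\le\; Q_\ph,
\]
with $Q_\ph$ independent of $x$ and $n$. Plugging $E=B_n^u(x,r)$ into the iterated identity at level $n-1$ and using this distortion estimate yields
\[
m_{f^{n-1}(x)}^\C(B^u(f^{n-1}x,r)) \;\asymp\; e^{(n-1)P(\ph)-S_{n-1}\ph(x)}\, m_x^\C(B_n^u(x,r)),
\]
with multiplicative constants depending only on $Q_\ph$. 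Since $e^{P(\ph)-\ph(\cdot)}$ is bounded above and below on $\Lambda$, the shift between $n-1$ and $n$ costs only a bounded factor, and the $u$-Gibbs property thus reduces to showing that the leaf-ball mass $m_y^\C(B^u(y,r))$ is bounded above and below, uniformly in $y\in\Lambda$, by constants depending only on $r$.

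The technical heart is this uniform estimate. The upper bound is immediate from Theorem \ref{thm:finite}, since $B^u(y,r)\subset\Vl^u(y)\cap\Lambda$ has total $m_y^\C$-mass at most $K$. For the lower bound, I would cover the compact set $\Vl^u(y)\cap\Lambda$ by a uniformly bounded number $N(r)$ of leaf balls of radius $r$ centered at points of $\Lambda$; by Theorem \ref{thm:finite} at least one such ball carries mass $\ge K^{-1}/N(r)$. To transfer this lower bound onto the ball centered at $y$ itself, I would invoke topological transitivity to shadow a trajectory landing close to the centers of these `fat' balls, and then use the iterated scaling identity together with the bounded distortion estimate to propagate the mass, with compactness of $\Lambda$ supplying uniformity in $y$. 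Making this transfer rigorous while keeping all constants independent of $y$ and $n$ is the step I expect to require the most care, and is presumably where the bulk of the technical work in \cite{CPZ} is concentrated.

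Finally, for the positivity claim, let $U\subset\Vl^u(x)\cap\Lambda$ be relatively open and pick any $y\in U$. Since $d_n(y,z)\to\infty$ for every $z\in\Vl^u(y)\setminus\{y\}$, the sets $B_n^u(y,r)\cap\Lambda$ shrink to $\{y\}$, so $B_n^u(y,r)\cap\Lambda\subset U$ for all sufficiently large $n$. By Theorem \ref{thm:finite} the measures $m_x^\C$ and $m_y^\C$ agree on their overlap, and the $u$-Gibbs lower bound just established gives
\[
m_x^\C(U)\;\ge\; m_y^\C(B_n^u(y,r))\;\ge\; Q_1^{-1} e^{-nP(\ph)+S_n\ph(y)} \;>\; 0.
\]
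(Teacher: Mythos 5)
Your argument is correct and follows essentially the same route as the paper: iterate the scaling identity of Theorem \ref{thm:Gibbs} over $u$-Bowen balls, apply the $u$-Bowen (bounded distortion) property of Lemma \ref{lem:bowen}, and reduce everything to a uniform two-sided bound on the leaf-ball mass $m_y^\C(B^u(y,r))$. The one step you flag as requiring real work --- the uniform lower bound on that ball mass --- is exactly what Proposition \ref{prop:uniform} and the proof of Theorem \ref{thm:finite} already supply (they bound $m_x^\C(B_\L^u(x,r))$ away from $0$ and $\infty$, not merely the whole-leaf mass), so you can cite that instead of re-running a transitivity/shadowing argument; note also that the sets $B_n^u(y,r)$ shrink to $\{y\}$ because $\diam B_n^u(y,r)\le r\lambda^n$, not because $d_n(y,z)\to\infty$ (which is false, since $d_n$ is bounded by $\diam M$).
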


The final crucial property of the reference measures is that they are absolutely continuous under holonomy.

\begin{theorem}\label{thm:holonomy}{\cite[Theorem 4.6]{CPZ}}
Under the hypotheses of Theorem \ref{thm:finite}, there is a constant $Q_2>0$ such that for every rectangle $R\subset \Lambda$ and every $y,z\in R$, the measures 
$\pi_{yz}^*m_z^\C=m_z^\C\circ\pi_{yz}$ and $m_y^\C$ are equivalent on $V_R^u(y)$, with 
\[
Q_2^{-1}\le\frac{d\pi_{yz}^*m_z^\C}{dm_y^\C}\le Q_2.
\]
\end{theorem}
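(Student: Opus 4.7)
The plan is to compare the weighted covers from Definition \ref{def:main} for sets $A \subset V_R^u(y)$ and their holonomy images $\pi_{yz}(A) \subset V_R^u(z)$. First, I would establish a stable-side bounded distortion estimate: if $w, w' \in \Lambda$ lie on a common local stable manifold, then
\[
|S_n\varphi(w) - S_n\varphi(w')| \leq Q_s \qquad \text{for all } n \geq 0,
\]
where $Q_s$ depends only on the H\"older data of $\varphi$, the stable contraction rate $\lambda$ from Proposition \ref{prop:local-mfds}, and an upper bound on $d(w,w')$. This follows immediately from H\"older continuity of $\varphi$ and the convergent geometric series $\sum_{k=0}^{\infty} d(f^kw, f^kw')^\alpha \leq d(w,w')^\alpha \sum_{k=0}^{\infty} \lambda^{k\alpha}$.

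Next, I would transfer covers via the holonomy. Fix a cover $\{B_{n_i}^u(x_i, r)\}$ of $A$ with $x_i \in V_R^u(y) \cap \Lambda$ and $n_i \geq N$, and let $x_i' := \pi_{yz}(x_i) \in V_R^u(z) \cap \Lambda$. For any $w$ in the $i$-th Bowen ball, let $w' := \pi_{yz}(w)$; since the pairs $(w,w')$ and $(x_i,x_i')$ lie on common local stable manifolds and both pairs are contained in $R$, a three-term triangle inequality together with stable contraction gives, for any $k < n_i$,
\[
d(f^kw', f^kx_i') \leq \lambda^k d(w,w') + d(f^kw, f^kx_i) + \lambda^k d(x_i, x_i') \leq r + 2\diam(R).
\]
Hence $\pi_{yz}(B_{n_i}^u(x_i, r) \cap V_R^u(y)) \subset B_{n_i}^u(x_i', r')$ with $r' := r + 2\diam(R)$, providing a cover of $\pi_{yz}(A)$ by $u$-Bowen balls of slightly enlarged radius, while the stable bounded distortion yields $e^{S_{n_i}\varphi(x_i')} \leq e^{Q_s} e^{S_{n_i}\varphi(x_i)}$ for each $i$.

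To finish, I must reconcile the radius mismatch, since the definition \eqref{car0} of $m_z^\C$ uses radius $r$ while the transported cover uses radius $r' > r$. The natural fix is to refine each enlarged Bowen ball $B_{n_i}^u(x_i', r')$ into a uniformly bounded number $M = M(r, r')$ of $u$-Bowen balls $B_{n_i}^u(z_j, r)$ with $z_j \in V_R^u(z) \cap \Lambda$; this is possible because, after applying $f^{n_i}$, the question reduces to covering a metric ball of radius $r'$ on an unstable leaf by metric balls of radius $r$, a task whose multiplicity depends only on $\dim E^u$ and $r'/r$, independently of $n_i$ and $x_i'$. Combining the three ingredients yields
\[
m_z^\C(\pi_{yz}(A)) \leq M \cdot e^{Q_s}\cdot m_y^\C(A),
\]
and the symmetric estimate follows by swapping $y$ and $z$ via $\pi_{zy} = \pi_{yz}^{-1}$. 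Setting $Q_2 := M \cdot e^{Q_s}$ and applying this bound to an arbitrary Borel $A$, together with the Radon--Nikodym theorem, then produces the claimed bound $Q_2^{-1} \leq d\pi_{yz}^*m_z^\C/dm_y^\C \leq Q_2$.

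The chief obstacle will be controlling the radius inflation uniformly in all the relevant parameters: the Carath\'eodory construction \eqref{car0} is not obviously scale-invariant, so the key technical point is that the refinement multiplicity $M$ does not depend on $n_i$, on how close the cover is to being optimal for the infimum, or on $R$ beyond the bounded upper bound for $\diam(R)$ imposed by the local product structure. The $u$-Gibbs property from Corollary \ref{cor:Gibbs} is the natural tool here, since it pins down the $m_x^\C$-measure of each individual $u$-Bowen ball at either scale up to a universal constant, allowing any bounded-multiplicity inefficiency introduced by the refinement to be absorbed into the absolute constant $Q_2$.
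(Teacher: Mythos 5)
Your proposal is essentially the paper's argument: transfer a cover of $A$ by $u$-Bowen balls through the holonomy, use the $s$-Bowen property (your stable bounded-distortion estimate, which is exactly Lemma \ref{lem:bowen}) to control the change in the weights $e^{S_{n_i}\ph(\cdot)}$, and absorb the radius inflation by refining each enlarged Bowen ball into a uniformly bounded number of radius-$r$ Bowen balls of the same order, which after applying $f^{n_i}$ is just a static covering statement on unstable leaves. The one structural difference is how the two arguments handle a pair $y,z$ whose stable displacement is not small compared to $r$: you enlarge the radius all the way to $r'=r+2\diam(R)$ in one shot, whereas the paper first treats only the ``close'' case where $\pi_{yz}B_{n_i}^u(x_i,r)\subset B_{n_i}^u(x_i,2r)$, and reduces the general case to it by pushing $E_y,E_z$ forward under $f^n$ until $f^ny,f^nz$ are close and then pulling back with the scaling identity of Theorem \ref{thm:Gibbs} (this is where $e^{Q_s}$ enters there). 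The paper's reduction buys you that every Bowen ball appearing in the argument has radius at most $2r<\tau$, so it stays inside a single local unstable leaf and the standard distortion estimates apply verbatim; your one-shot version must additionally check that balls of radius $r+2\diam(R)$ still fit within the local leaf structure (or cover them by several local leaves), and your final constant should also carry a factor $e^{Q_u(r')}$ from comparing $S_{n_i}\ph$ at the refined centers $z_j$ with $S_{n_i}\ph(x_i')$ --- the analogue of the $e^{Q_u}$ appearing in the paper's estimate. These are minor bookkeeping points, not gaps in the idea.
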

Note that Theorem \ref{thm:holonomy} in particular shows that given a rectangle 
$R\subset \Lambda$, if $m_x^\C(V_R^u(x))>0$ for some $x\in R$, then the same is true for every $x\in R$; moreover, by Corollary \ref{cor:Gibbs} this happens whenever $R$ is the closure of its interior (relative to $\Lambda$).

Using these properties of the measures $m_x^\C$, we can carry out the geometric construction of equilibrium states; see \S\ref{sec:proofs} for the proof of the following.

\begin{theorem}\label{thm:main}
Under the hypotheses of Theorem \ref{thm:finite}, the following are true.
\begin{enumerate}[label=\upshape{(\arabic{*})}]
\item\label{1} For every $x\in \Lambda$, the sequence of measures 
$\mu_n := \frac 1n \sum_{k=0}^{n-1} f_*^k m_x^\C / m_x^\C(\Vl^u(x))$ from \eqref{u-evolution} is weak* convergent as $n\to\infty$ to a probability measure $\mu_\ph$ that is independent of $x$.
\item\label{2} The measure $\mu_\ph$ is ergodic, gives positive weight to every open set in $\L$, has the Gibbs property \eqref{gibbs2} and is the unique equilibrium state for 
$(\Lambda,f,\ph)$.
\item\label{3}
For every rectangle $R\subset \Lambda$ with $\mu_\ph(R)>0$, the conditional measures 
$\mu_y^u$ generated by $\mu_\ph$ on unstable sets $V_R^u(y)$ are equivalent for 
$\mu_\ph$-almost every $y\in R$ to the reference measures $m_y^\C|_{V_R^u(y)}$.  Moreover, there exists $Q_3>0$, independent of $R$ and $y$, such that for 
$\mu_\ph$-almost every $y\in R$ we have\footnote{It is reasonable to expect, based on analogy with the case of SRB measure, that the Radon--Nikodym derivative in \eqref{eqn:mu-m} is in fact H\"older continuous and given by an explicit formula; at present we can only prove this for a modified version of $m_x^\C$, whose definition we omit here.}
\begin{equation}\label{eqn:mu-m}
Q_3^{-1}\leq \frac{d\mu_y^u}{dm_y^\C}(z) m_y^\C(R) \leq Q_3 \text{ for $\mu_y^u$-a.e.}\ z\in V_R^u(y).
\end{equation}
\end{enumerate}
\end{theorem}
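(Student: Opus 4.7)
The plan is to prove that every weak$^*$ subsequential limit of the averages $\mu_n$ satisfies the hypotheses of Proposition \ref{prop:unique}---that is, each such limit is an ergodic $f$-invariant Gibbs measure for $\ph$---so must coincide with the (necessarily unique) equilibrium state $\mu_\ph$; the full sequence then converges, and the limit is independent of $x$ because the same argument goes through for any basepoint. Expansivity of $f|_\Lambda$ is classical for locally maximal hyperbolic sets, $\M(\Lambda)$ is weak$^*$-compact so subsequential limits exist, and invariance of any limit is standard from Ces\`aro averaging. It therefore remains to establish the Gibbs property, ergodicity, and the explicit bound \eqref{eqn:mu-m}.

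The central step is control of the unstable conditional measures of an arbitrary limit $\mu$. Given a rectangle $R\subset\Lambda$ with $\mu(R)>0$ and $y\in R$, I would show the conditional $\mu_y^u$ is equivalent to $m_y^\C|_{V_R^u(y)}$ with Radon--Nikodym derivative bounded above and below by a constant depending only on $r$. The mechanism goes through each $\mu_n$ separately. The pushforward $f_*^k m_x^\C$ is supported on $f^k(\Vl^u(x)\cap\Lambda)$, which meets $R$ in an at-most-countable union of full unstable plaques $V_R^u(y_i)$; on each plaque, iterating the scaling law of Theorem \ref{thm:Gibbs} $k$ times and invoking bounded distortion of $S_k\ph$ along $V_R^u(y_i)$ (an exponentially small set under $f^{-k}$) identifies $f_*^k m_x^\C|_{V_R^u(y_i)}$ with a scalar multiple of $m_{y_i}^\C$, while Theorem \ref{thm:holonomy} compares the different $m_{y_i}^\C$'s across plaques via stable holonomies. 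Averaging in $k$ gives uniform two-sided density bounds for finite-partition conditionals of $\mu_n|_R$ against the reference family $\{m_y^\C\}$; after passing to the weak$^*$ limit and refining the approximating finite partitions to the measurable partition into unstable plaques, the bounds transfer to the true conditional $\mu_y^u$ and yield \eqref{eqn:mu-m} of part \ref{3}. The Gibbs property \eqref{gibbs2} of $\mu$ itself then emerges by integrating the $u$-Gibbs bounds of Corollary \ref{cor:Gibbs} in stable directions, using that a Bowen ball $B_n(y,r)$ admits a local product decomposition controlled by holonomy.

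With this conditional description in hand, Theorem \ref{thm:holonomy} transfers absolute continuity under stable holonomy from $\{m_y^\C\}$ to the unstable conditionals $\{\mu_y^u\}$ of $\mu$, so Proposition \ref{prop:hopf} gives ergodicity; Proposition \ref{prop:unique} then yields uniqueness. Positivity of $\mu_\ph$ on open subsets of $\Lambda$ follows from Corollary \ref{cor:Gibbs} together with the local product structure, since every such open set meets a relatively open piece of an unstable plaque that carries positive $m_y^\C$-mass. The main obstacle is the uniform two-sided Radon--Nikodym estimate: it requires carrying the iterated scaling from Theorem \ref{thm:Gibbs}, the bounded-distortion estimate for Birkhoff sums of $\ph$ along contracting preimages of unstable plaques, and the holonomy distortion from Theorem \ref{thm:holonomy} simultaneously and uniformly in $n$, $k$, and the basepoint, and then justifying the passage from finite to measurable partitions in the weak$^*$ limit. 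This is the technical heart of the argument and where the full combined strength of Theorems \ref{thm:finite}--\ref{thm:holonomy} is used.
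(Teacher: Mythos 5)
Your proposal follows essentially the same route as the paper's proof: decompose $f_*^k m_x^\C$ on a rectangle into standard pairs $(\Vl^u(y_i),\rho_k^{y_i})$ using the iterated scaling law of Theorem \ref{thm:Gibbs} and the $u$-Bowen (bounded distortion) property, compare the plaque measures $m_{y_i}^\C$ across the rectangle via Theorem \ref{thm:holonomy}, average in $k$, pass the resulting two-sided density bounds through a refining sequence of finite partitions to the true unstable conditionals of a subsequential limit (this is the paper's Proposition \ref{prop:conditionals}), then obtain ergodicity from the Hopf argument, the Gibbs property by integrating the $u$-Gibbs bounds over a product decomposition of Bowen balls, uniqueness from Proposition \ref{prop:unique}, and convergence of the full sequence from compactness of $\M(\L)$. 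All of this matches \S\ref{sec:geometric}.

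The one genuine gap is your justification of full support. You argue that $\mu_\ph(U)>0$ for open $U$ ``since every such open set meets a relatively open piece of an unstable plaque that carries positive $m_y^\C$-mass.'' This is circular: the equivalence $\mu_y^u\sim m_y^\C$ of \eqref{eqn:mu-m} is only established (and only meaningful) on rectangles $R$ with $\mu(R)>0$, so positivity of the reference measure on a plaque inside a rectangle tells you nothing until you already know that rectangle is non-null for $\mu$. The gap is not cosmetic, because the uniform lower bound in the Gibbs property \eqref{gibbs2} requires $\inf_{x\in\L}\mu(R(x,\delta))>0$, which is exactly full support; without it you only get a Gibbs estimate relative to $\mu(R(x,\delta))$, which is not enough for Proposition \ref{prop:unique}. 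The paper closes this with a dynamical argument (Lemmas \ref{lem:Rxz}--\ref{lem:dense-orbit}): cover $\L$ by finitely many rectangles $R(z,\delta')$ so that at least one has positive measure, pick a point $y$ with dense backward orbit, pull $y$ back into that rectangle, use the scaling estimate of Lemma \ref{lem:pre-Gibbs} to see that the thin product rectangle $R_n(x,\delta)$ at the pullback time still has positive measure, and then push it forward by $f^n$ (using $\|Df|_{E^s}\|\le 1$) into $R(y,\delta)$; invariance of $\mu$ then gives $\mu(R(y,\delta))>0$, and transitivity makes such $R(y,\delta)$ cofinal among open sets. You need some version of this transitivity-plus-invariance step; the holonomy and $u$-Gibbs estimates alone do not produce it.
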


Theorems \ref{thm:holonomy} and \ref{thm:main}\ref{3} allow us to show that the equilibrium state $\mu_\ph$ has local product structure, as follows.  Consider a rectangle $R\subset \Lambda$ with $\mu_\ph(R)>0$, and a system of conditional measures $\mu^u_x$ with respect to the partition $\xi$ of $R$ into local unstable leaves.
Given $p\in R$, define a measure $\tilde\mu_p$ on $V_R^s(p)$ by 
$\tilde\mu_p(A)=\mu_\ph(\bigcup_{x\in A}V_R^u(x))$ as in the paragraph preceding \eqref{gibbs-split2}.
Since $R$ is homeomorphic to the direct product of $V_R^u(p)$ and $V_R^s(p)$, the product of the measures $\mu_p^u$ and $\tilde\mu_p$ gives a measure on $R$ that we denote by $\mu_p^u\otimes \tilde\mu_p$. The following local product structure result is a consequence of Theorem \ref{thm:holonomy}, Theorem \ref{thm:main}\ref{3}, and \eqref{gibbs-split2}; see \S\ref{sec:lps}.

\begin{corollary}\label{cor:prod}
For every rectangle $R$ and $\mu_\ph$-almost every $p\in R$, we have 
$\pi^*_{py}\mu^u_y \sim \mu^u_p$ for $\tilde\mu_p-$almost every $y\in V_R^s(p)$, and thus $\mu_{\varphi} \sim \mu^u_p\otimes \tilde\mu_p$.
Moreover, it follows that $\tilde\mu_p$ is equivalent to $\mu_p^s$, the conditional measure on $V_R^s(p)$ with respect to the partition into stable leaves, for $\mu_\ph$-a.e.\ $p\in R$.
\end{corollary}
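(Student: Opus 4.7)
The overall strategy is to transfer the absolute continuity properties of the reference measures $m_x^\C$ -- provided by Theorem~\ref{thm:holonomy} and Theorem~\ref{thm:main}\ref{3} -- to the conditional measures $\mu_y^u$, and then read off the product structure from the disintegration formula~\eqref{gibbs-split2}. Let $R^* \subset R$ be the $\mu_\ph$-full-measure set on which $\mu_y^u \sim m_y^\C|_{V_R^u(y)}$ with uniformly bounded Radon--Nikodym derivative; since this property depends only on the unstable leaf through $y$, I may take $R^*$ to be $\xi$-saturated. Applying~\eqref{gibbs-split2} to $R\setminus R^*$ then shows that for $\mu_\ph$-a.e.\ $p \in R$, both $p\in R^*$ and $\tilde\mu_p$-a.e.\ $y\in V_R^s(p)$ lies in $R^*$. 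For any such $p,y$ I chain the equivalences
\[
\pi_{py}^* \mu_y^u \sim \pi_{py}^* m_y^\C \sim m_p^\C \sim \mu_p^u,
\]
using Theorem~\ref{thm:holonomy} in the middle step; this yields the first claim, with uniformly bounded Radon--Nikodym derivatives coming from the uniform bounds $Q_2, Q_3$.

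For the product structure, fix such a $p$ and use the Smale bracket to identify $R$ with $V_R^u(p) \times V_R^s(p)$ via $(a,b) \mapsto [a,b]$. Under this identification the unstable leaf through $x \in V_R^s(p)$ corresponds to the slice $V_R^u(p) \times \{x\}$ and the holonomy $\pi_{px}\colon V_R^u(p) \to V_R^u(x)$ becomes the natural inclusion $a \mapsto [a,x]$. Changing variables in the inner integral of~\eqref{gibbs-split2} gives
\[
\mu_\ph(E) = \int_{V_R^s(p)} \int_{V_R^u(p)} \one_E([a,x]) \, \rho(a,x) \, d\mu_p^u(a) \, d\tilde\mu_p(x),
\]
where $\rho(a,x) := d(\pi_{px}^* \mu_x^u)/d\mu_p^u(a)$ is bounded above and below by a uniform constant by the first step. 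Fubini then exhibits $\mu_\ph$ as $\rho \cdot (\mu_p^u \otimes \tilde\mu_p)$, establishing $\mu_\ph \sim \mu_p^u \otimes \tilde\mu_p$. To deduce $\tilde\mu_p \sim \mu_p^s$, I would use the standard fact that two equivalent probability measures with density bounded away from $0$ and $\infty$ have equivalent conditional measures along any measurable partition. Disintegrating the product measure $\mu_p^u \otimes \tilde\mu_p$ along the partition into stable leaves gives, by Fubini, conditionals that are normalizations of $\tilde\mu_p$ (transported to each stable leaf via unstable holonomy); comparing with the stable disintegration of $\mu_\ph$ at the leaf through $p$ yields $\mu_p^s \sim \tilde\mu_p$.

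The main obstacle is not dynamical but measure-theoretic bookkeeping: one must track several full-measure subsets (where Radon--Nikodym equivalences hold, where disintegrations are defined, where generic behavior can be simultaneously invoked along both stable and unstable fibers) and verify that the identifications between different partition elements via holonomy maps are measurable, so that the Fubini and change-of-variable arguments are rigorously justified. The uniform bounds $Q_2, Q_3$ and the continuity of stable/unstable holonomies ensure that none of the required estimates degenerate, so the argument goes through, but writing it out requires care with quantifiers and null sets.
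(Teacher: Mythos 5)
Your proposal is correct and follows essentially the same route as the paper: the first claim comes from chaining $\mu_y^u \sim m_y^\C$ (Theorem \ref{thm:main}\ref{3}) with the holonomy invariance of the reference measures (Theorem \ref{thm:holonomy}), exactly as in \eqref{eqn:mu-cond-ac}; the product structure is obtained by inserting the Radon--Nikodym derivative $d\mu_y^u/d(\pi_{py}^*\mu_p^u)$ into the disintegration \eqref{gibbs-split2} and applying Fubini; and the identification $\tilde\mu_p\sim\mu_p^s$ follows from uniqueness of the disintegration along stable leaves. Your extra care with the $\xi$-saturated full-measure set $R^*$ is a slightly more explicit version of the null-set bookkeeping the paper leaves implicit.
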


We remark that Corollary \ref{cor:prod} was also proved by Leplaideur \cite{Lep}. His proof uses Markov partitions to construct families of leaf measures with the properties given in Theorems \ref{thm:finite} and \ref{thm:holonomy}.
Historically, this description of $\mu_\ph$ in terms of its direct product structure dates back to Margulis \cite{gM70}, who described the unique measure of maximal entropy for a transitive Anosov flow as a direct product of leafwise measures satisfying the continuous-time analogue of \eqref{eqn:scaling} for $\ph=0$.  In this specific case the equivalences in Corollary \ref{cor:prod} can be strengthened to equalities.

\section{Carath\'eodory dimension structure}\label{car-struc} 

The definition of the measures $m_x^\C$ in \eqref{car0} is a specific instance of the \emph{Carath\'eodory dimension construction} introduced by the second author in \cite{yP88} (see also \cite[\S10]{pes97}). It is a substantial generalization and adaptation to dynamical systems of the classical construction of Carath\'eodory measure in geometric measure theory, of which Lebesgue measure and Hausdorff measure are the most well-known examples.  We briefly recall here the Carath\'eodory dimension construction together with some of its basic properties.

\subsection{Carath\'eodory dimension and measure} 

A \emph{Carath\'eodory dimension structure}, or \emph{$C$-structure}, on a set $X$ is given by the following data.
\begin{enumerate}[label=(\arabic{*})]
\item An indexed collection of subsets of $X$, denoted $\mathcal{F}=\{U_s : s\in \mathcal{S}\}$.
\item Functions $\xi,\eta,\psi\colon \mathcal{S}\to[0,\infty)$ satisfying the following conditions:
\begin{enumerate}[label=\textbf{A\arabic{*}.}]
\item\label{A1} if $U_s=\emptyset$ then $\eta(s)=\psi(s)=0$; if $U_s\neq\emptyset$, then 
$\eta(s)>0$ and $\psi(s)>0$;\footnote{In \cite{pes97}, condition \ref{A1} includes the requirement that there is $s_0\in \mathcal{S}$ such that $U_{s_0} = \emptyset$; 
here we remove this assumption and instead define $m_C(\emptyset,\alpha) :=0$, which is equivalent.
}
\item\label{A2} for any $\delta>0$ one can find $\varepsilon>0$ such that 
$\eta(s)\leq\delta$ for any $s\in\mathcal{S}$ with $\psi(s)\leq\varepsilon$;
\item\label{A3} for any $\eps>0$ there exists a finite or countable subcollection $\mathcal{G}\subset \mathcal{S}$ that covers $X$ (meaning that $\bigcup_{s\in \mathcal{G}} U_s \supset X$) and has $\psi(\mathcal{G}) := \sup\{\psi(s) : s\in \mathcal{S} \}\leq \eps$.
\end{enumerate}
\end{enumerate}
Note that no conditions are placed on $\xi(s)$, which we interpret as the \emph{weight} of $U_s$.  The values $\eta(s)$ and $\psi(s)$ can each be interpreted as a \emph{size} or \emph{scale} of $U_s$; we allow these functions to be different from each other.

The $C$-structure $(\mathcal{S},\mathcal{F},\xi,\eta,\psi)$ determines a one-parameter family of outer measures on $X$ as follows.  Fix a nonempty set $Z\subset X$ and consider some $\mathcal{G}\subset \mathcal{S}$ that covers $Z$ (meaning that $\bigcup_{s\in \mathcal{G}} U_s \supset Z$).  
Then $\psi(\mathcal{G})$ is interpreted as the largest size of sets in the cover, and we set for each $\alpha\in \RR$,
\begin{equation}\label{eqn:mCZa}
m_C(Z,\alpha) := \lim_{\eps\to 0} \inf_{\mathcal{G}} \sum_{s\in \mathcal{G}} \xi(s) \eta(s)^\alpha,
\end{equation}
where the infimum is taken over all finite or countable $\mathcal{G}\subset \mathcal{S}$ covering $Z$ with $\psi(\mathcal{G})\le\eps$.  Defining $m_C(\emptyset,\alpha) := 0$, it follows from \cite[Proposition 1.1]{pes97} that $m_C(\cdot,\alpha)$ is an outer measure.
The measure induced by $m_C(\cdot,\alpha)$ on the $\sigma$-algebra of measurable sets is the \emph{$\alpha$-Carath\'eodory measure}; it need not be $\sigma$-finite or non-trivial.

\begin{proposition}[{\cite[Proposition 1.2]{pes97}}]\label{prop:C-dim}
For any set $Z\subset X$ there exists a critical value
$\alpha_C\in\mathbb{R}$ such that $m_C(Z,\alpha)=\infty$ for $\alpha<\alpha_C$ and $m_C(Z,\alpha)=0$ for $\alpha>\alpha_C$.
\end{proposition}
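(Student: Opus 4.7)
The plan is to adapt the classical argument that produces the Hausdorff dimension from the Hausdorff measures: here $\psi$ plays the role of diameter controlling the refinement of covers while $\eta^\alpha$ plays the role of size. Condition \textbf{A2}, which forces $\eta(s)\to 0$ whenever $\psi(s)\to 0$, is the only substantial ingredient; the rest is bookkeeping.

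First I would introduce the pre-limit quantity
\[
M_\eps(Z,\alpha) := \inf_\mathcal{G} \sum_{s\in\mathcal{G}} \xi(s) \eta(s)^\alpha,
\]
where the infimum is over finite or countable $\mathcal{G}\subset\mathcal{S}$ covering $Z$ with $\psi(\mathcal{G})\leq \eps$. Shrinking $\eps$ only restricts the admissible covers, so $M_\eps(Z,\alpha)$ is non-decreasing as $\eps\to 0^+$, and $m_C(Z,\alpha) = \lim_{\eps\to 0^+} M_\eps(Z,\alpha) = \sup_{\eps>0} M_\eps(Z,\alpha)\in[0,\infty]$. In particular $M_\eps(Z,\alpha)\leq m_C(Z,\alpha)$ for every $\eps>0$.

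The central comparison is the following. Put $\delta(\eps) := \sup\{\eta(s) : s\in\mathcal{S},\ \psi(s)\leq\eps\}$; condition \textbf{A2} is precisely the statement that $\delta(\eps)\to 0$ as $\eps\to 0^+$. For any $\alpha_1<\alpha_2$ and any admissible cover $\mathcal{G}$ with $\psi(\mathcal{G})\leq\eps$,
\[
\sum_{s\in\mathcal{G}} \xi(s)\eta(s)^{\alpha_2} \leq \delta(\eps)^{\alpha_2-\alpha_1}\sum_{s\in\mathcal{G}} \xi(s)\eta(s)^{\alpha_1},
\]
and passing to the infimum gives $M_\eps(Z,\alpha_2)\leq \delta(\eps)^{\alpha_2-\alpha_1} M_\eps(Z,\alpha_1)$. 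If $m_C(Z,\alpha_1)<\infty$ then the right side is dominated by $\delta(\eps)^{\alpha_2-\alpha_1} m_C(Z,\alpha_1)\to 0$, so $m_C(Z,\alpha_2)=0$. Taking the contrapositive, if $m_C(Z,\alpha_2)>0$ and $\alpha_1<\alpha_2$ then $m_C(Z,\alpha_1)=\infty$.

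Finally, set $\alpha_C := \inf\{\alpha\in\RR : m_C(Z,\alpha)=0\}$, with the convention $\inf\emptyset := +\infty$. If $\alpha>\alpha_C$, I pick $\alpha'\in(\alpha_C,\alpha)$ with $m_C(Z,\alpha')=0$ and invoke the first implication to conclude $m_C(Z,\alpha)=0$. If $\alpha<\alpha_C$, I pick any $\alpha''\in(\alpha,\alpha_C)$; then $m_C(Z,\alpha'')\neq 0$ by definition of the infimum, so the second implication yields $m_C(Z,\alpha)=\infty$. The only nontrivial input is \textbf{A2}, which is what ensures $\delta(\eps)\to 0$; there is no real obstacle beyond this bookkeeping.
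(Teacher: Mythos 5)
Your argument is correct and is essentially the standard proof of this fact (the paper itself only cites \cite[Proposition~1.2]{pes97}, where the same monotonicity argument via condition \textbf{A2} is used): the key implication ``$m_C(Z,\alpha_1)<\infty \Rightarrow m_C(Z,\alpha_2)=0$ for $\alpha_2>\alpha_1$'' is exactly the mechanism behind the critical value, and your bookkeeping with $M_\eps$ and $\delta(\eps)$ is sound. The only caveat is the degenerate cases your convention $\inf\emptyset=+\infty$ already hints at: without further hypotheses $\alpha_C$ may a priori be $\pm\infty$ (e.g.\ if $\xi\equiv 0$ or $Z=\emptyset$), so the assertion $\alpha_C\in\RR$ in the statement is a slight abuse that neither you nor the cited reference needs to, or does, justify by this argument alone.
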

We call $\dim_CZ =\alpha_C$ the \emph{Carath\'eodory dimension} of the set $Z$ associated to the $C$-structure $(\mathcal{S},\mathcal{F},\xi,\eta,\psi)$.  By Proposition \ref{prop:C-dim}, $\alpha=\dim_C X$ is the only value of $\alpha$ for which \eqref{eqn:mCZa} can possibly produce a non-zero finite measure on $X$, though it is still possible that $m_C(X,\dim_C X)$ is equal to $0$ or $\infty$.
  
\subsection{Examples of $C$-structures} 
The $C$-structures in which we are interested are generated by other structures on the set $X$.  

\subsubsection{Hausdorff dimension and measure} 
If $X$ is a metric space, then consider the $C$-structure given by
$\mathcal{S} := X\times (0,\infty)$ and
\[
\mathcal{F} := \{B(x,r) : x\in X, r>0\}, \quad
\xi(x,r)=1, \quad
\eta(x,r)=\psi(x,r) = r.
\]
Comparing \eqref{eqn:malpha0} and \eqref{eqn:mCZa}, we see that $m_C(Z,\alpha)=m_H(Z,\alpha)$ for every $Z\subset X$, and the Hausdorff dimension $\dim_H(Z)$ is the critical value such that $m_H(Z,\alpha)$ is infinite for
$\alpha <\dim_H(Z)$ and $0$ for $\alpha >\dim_H(Z)$.  Thus $\dim_CZ=\dim_HZ$, and the outer measure $m_C(\cdot,\dim_H Z)$ on $Z$ is the $(\dim_H Z)$-dimensional spherical Hausdorff measure.

It is useful to understand when an outer measure defines a Borel measure on a metric space. Recall that an outer measure $m$ on a metric space $(X,d)$ is a \emph{metric outer measure} if $m(E\cup F) = m(E) + m(F)$ whenever $d(E,F) := \inf\{d(x,y) : x\in E, y\in F)\}>0$.

\begin{proposition}[{\cite[\S2.3.2(9)]{hF69}}]\label{prop:metric-borel}
If $X$ is a metric space and $m$ is a metric outer measure on $X$, then every Borel set in $X$ is $m$-measurable, and so $m$ defines a Borel measure on $X$.
\end{proposition}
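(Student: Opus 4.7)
The plan is to invoke Carath\'eodory's criterion: a set $E \subset X$ is $m$-measurable if $m(A) = m(A \cap E) + m(A \setminus E)$ for every $A \subset X$, and the collection of $m$-measurable sets automatically forms a $\sigma$-algebra on which $m$ restricts to a countably additive measure. It therefore suffices to show that every closed subset of $X$ is $m$-measurable; the Borel $\sigma$-algebra, being generated by the closed sets, will then be contained in the $\sigma$-algebra of $m$-measurable sets, and the restriction of $m$ to the Borel sets is a Borel measure, as claimed.

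Fix a closed set $F \subset X$ and a test set $A \subset X$; by countable subadditivity we always have $m(A) \leq m(A \cap F) + m(A \setminus F)$, so only the reverse inequality requires work. Assume $m(A) < \infty$, since otherwise the inequality is trivial. For each integer $n \geq 1$, let $B_n := \{ x \in A : d(x,F) \geq 1/n \}$. Since $F$ is closed, every point of $A \setminus F$ has positive distance to $F$, hence $A \setminus F = \bigcup_n B_n$. By construction $d(A \cap F, B_n) \geq 1/n > 0$, so the metric outer measure hypothesis gives $m(A \cap F) + m(B_n) = m\bigl( (A \cap F) \cup B_n \bigr) \leq m(A)$. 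It remains to prove that $\lim_n m(B_n) = m(A \setminus F)$.

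The crux is to control the annular differences $C_n := B_{n+1} \setminus B_n = \{ x \in A : 1/(n+1) \leq d(x,F) < 1/n \}$. A direct triangle-inequality calculation shows $d(C_n, C_m) \geq 1/(n+1) - 1/m > 0$ whenever $m \geq n+2$, so for any finite collection of indices $n_1 < n_2 < \cdots < n_N$ of the same parity, the sets $C_{n_1}, \ldots, C_{n_N}$ are pairwise at positive distance. Iterating the metric outer measure property yields $\sum_{k=1}^N m(C_{2k}) = m\bigl( \bigcup_{k=1}^N C_{2k} \bigr) \leq m(A)$, and similarly for odd indices, so $\sum_k m(C_k) \leq 2 m(A) < \infty$. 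Since $A \setminus F \subset B_n \cup \bigcup_{k \geq n} C_k$, subadditivity gives $m(A \setminus F) \leq m(B_n) + \sum_{k \geq n} m(C_k)$, and the tail of a convergent series tends to $0$, so $\liminf_n m(B_n) \geq m(A \setminus F)$. Combined with $m(B_n) \leq m(A \setminus F)$ from monotonicity (since $B_n \subset A \setminus F$), this yields the desired equality.

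The main obstacle is precisely the convergence $m(B_n) \to m(A \setminus F)$: a general outer measure need not be continuous along increasing sequences, and the whole point of the metric hypothesis is to supply the summability of $\sum_k m(C_k)$ via the quantitative separation of the annuli $C_n$ from $C_{n+2}$. Once this separation-plus-summability step is in hand, the rest of the argument is formal, and Carath\'eodory's criterion does the remaining bookkeeping to promote $m$ to a genuine Borel measure.
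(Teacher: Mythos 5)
Your proof is correct and is precisely the standard proof of Carath\'eodory's criterion; the paper itself omits the argument and simply cites Federer, whose proof proceeds by the same reduction to closed sets and the same even/odd annulus decomposition to get summability of $\sum_k m(C_k)$. Nothing to add.
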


Given any $E,F \subset X$, with $d(E,F)>0$, we see that any cover $\mathcal{G}\subset \mathcal{F}$ of $E\cup F$ with $\psi(\mathcal{G}) \leq d(E,F)/2$ can be written as the disjoint union of a cover of $E$ and a cover of $F$; using this it is easy to show that $m_H(E\cup F,\alpha) = m_H(E,\alpha) + m_H(F,\alpha)$, so $m_H(\cdot,\alpha)$ is a metric outer measure.  By Proposition \ref{prop:metric-borel}, this defines a Borel measure on $X$.

\subsubsection{Topological pressure as a Carath\'eodory dimension}\label{sec:pres} 
Let $f$ be a continuous map of a compact metric space $X$, and $\ph\colon X\to \RR$ a continuous function. Then as described already in \S\ref{sec:car}, one can consider covers that are refined dynamically rather than geometrically.  This was done first by Bowen to define topological entropy in a more general setting \cite{rB73}, and then extended by Pesin and Pitskel' to topological pressure \cite{PP84}.  Here we give a definition that differs slightly from \cite{PP84} but gives the same dimensional quantity \cite[Proposition 5.2]{vC11}.

Fix $r>0$ and to each $(x,n) \in X\times \NN$, associate the Bowen ball $B_n(x,r)$.  Let $\mathcal{F}$ be the collection of all such Bowen balls,
and let $\mathcal{S} = X\times \NN$, so $s=(x,n)$ has $U_s = B_n(x,r)$.  Now put
\begin{equation}\label{eqn:PC}
\xi(x,n) = e^{S_n\ph(x)}, \quad \eta(x,n) = e^{-n}, \quad \psi(x,n) = \tfrac 1n.
\end{equation}
It is easy to see that $(\mathcal{S},\mathcal{F},\xi,\eta,\psi)$ satisfies \ref{A1}--\ref{A3}, so this defines a $C$-structure.  The associated outer measure is given by
\begin{equation}\label{eqn:mCP}
m_C(Z,\alpha) = \lim_{N\to\infty} \inf_\mathcal{G} \sum_{(x,n)\in \mathcal{G}} e^{S_n\ph(x)} e^{-n\alpha},
\end{equation}
where the infimum is over all $\mathcal{G} \subset \mathcal{S}$ such that $\bigcup_{(x,n)\in \mathcal{G}} B_n(x,r) \supset Z$ and $n\geq N$ for all $(x,n)\in \mathcal{G}$.

\begin{remark}\label{rmk:not-metric}
The measure $m_C(\cdot,\alpha)$ is not necessarily a metric outer measure, since there may be $x\neq y\in X$ such that $y\in B_n(x,r)$ for all $n\in \NN$.\footnote{In fact $m_C(\cdot,\alpha)$ is an outer measure if and only if $f$ is positively expansive to scale $r$.}  Thus Borel sets in $X$ need not be $m_C(\cdot,\alpha)$-measurable.
\end{remark}

Writing $\dim_C^r Z$ for the critical value of $\alpha$, where the superscript emphasizes the dependence on $r$, the quantity
\[
P_Z(\ph)=\ulim_{r\to 0} \dim_C^r Z
\]
is called the the \emph{topological pressure} of $\ph$ on the set $Z$. Observe that this notion of the topological pressure is more general than the one introduced in 
Definition \ref{def:pressure} as it is more suited to
arbitrary subsets $Z$ (which need not be compact or invariant); both definitions agree when $Z=X$ \cite[Theorem 11.5]{pes97}.

\subsubsection{A $C$-structure on local unstable leaves}\label{car.measure}
Now consider the setting of Theorems \ref{thm:finite}--\ref{thm:holonomy}: $\Lambda$ is a hyperbolic set for a $C^{1+\alpha}$ diffeomorphism $f$, and $\ph\colon \Lambda\to \RR$ is H\"older continuous. Fix $r>0$ and define a $C$-structure on $X = \Vl^u(x)\cap\Lambda$, which depends on $\ph$, in the following way. To each $(x,n)\in X\times\NN$, associate the Bowen ball $B_n(x,r)$. Let $\mathcal{F}$ be the collection of all such balls, and let 
$\mathcal{S} = X\times\NN$, so $s=(x,n)$ has $U_s=B_n(x,r)$.  Now put 
\begin{equation}\label{eqn:PC2}
\xi(x,n) = e^{S_n\ph(x)}, \quad \eta(x,n) = e^{-n}, \quad \psi(x,n) = \tfrac 1n.
\end{equation}
Again, $(\mathcal{S},\mathcal{F},\xi,\eta,\psi)$ satisfies \ref{A1}--\ref{A3} and defines a $C$-structure, whose associated outer measure is given by
\begin{equation}\label{eqn:mCP2}
m_C(Z,\alpha)=\lim_{N\to\infty}\inf_\mathcal{G}\sum_{(x,n)\in\mathcal{G}}e^{S_n\ph(x)} 
e^{-n\alpha},
\end{equation}
where the infimum is over all $\mathcal{G} \subset \mathcal{S}$ such that 
$\bigcup_{(x,n)\in\mathcal{G}}B_n(x,r)\supset Z$ and $n\geq N$ for all 
$(x,n)\in\mathcal{G}$.

Given $x\in \Lambda$ we are interested in two things: 
\begin{enumerate}
\item the Carath\'eodory dimension of $X$, as determined by this $C$-structure; and
\item the (outer) measure on $X$ defined by \eqref{eqn:mCP} at $\alpha = \dim_C(X)$.
\end{enumerate}
The first of these is settled by the following, which is proved in \cite[Theorem 4.2(1)]{CPZ}.

\begin{proposition}\label{prop:P-same}
With $\Lambda,f,\ph,r$ as above, and the $C$-structure defined on $X = \Vl^u(x)\cap \L$ by Bowen balls $B_n(x,r)$ and \eqref{eqn:PC2}, we have $\dim_C^r(X) = P(\ph)$ for every $x\in \Lambda$.  In particular, this implies that $P_X(\ph) = P_\Lambda(\ph)$.
\end{proposition}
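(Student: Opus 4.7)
\emph{Strategy.} The proposition packages two assertions: $\dim_C^r(X) = P(\varphi)$, and the consequence $P_X(\varphi) = P_\Lambda(\varphi)$. For the latter, $P_\Lambda(\varphi) = P(\varphi)$ by expansivity of $f|_\Lambda$, $P_X(\varphi) \leq P_\Lambda(\varphi)$ is trivial monotonicity, and $P_X(\varphi) \geq P_\Lambda(\varphi)$ follows because the lower-bound argument below carries over verbatim to the Bowen--Pesin--Pitskel' $C$-structure of \S\ref{sec:pres} with unrestricted centers. Thus the bulk of the work is bounding the critical parameter of $m_C$ from above and below by $P(\varphi)$.

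\emph{Upper bound.} Fix $\alpha > \alpha' > P(\varphi)$ and large $n$. By definition of pressure there is an $(n, r/2)$-spanning set $E_n \subset \Lambda$ with $\sum_{z \in E_n} e^{S_n\varphi(z)} \leq C_1 e^{n\alpha'}$. Replace each $z \in E_n$ by a point $y_z \in B_n(z, r/2) \cap X$ when that intersection is nonempty; since $B_n(y_z, r) \supset B_n(z, r/2)$, this yields a cover of $X$ by Bowen balls centered in $X$. The Bowen distortion estimate $|S_n\varphi(y_z) - S_n\varphi(z)| \leq Q$ -- a standard consequence of H\"older continuity of $\varphi$ and exponential decay of orbit defects on hyperbolic Bowen balls -- bounds the partition sum by $C_1 e^Q e^{n\alpha'}$, so $m_C(X, \alpha) \leq C_1 e^Q e^{N(\alpha' - \alpha)} \to 0$ as $N \to \infty$; hence $\dim_C^r(X) \leq \alpha$.

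\emph{Lower bound.} Fix $\alpha < \alpha' < P(\varphi)$ and let $\mathcal{G} = \{B_{n_i}(y_i, r)\}_{i \in I}$ be any cover of $X$ with $n_i \geq N$. The plan is to transfer $\mathcal{G}$ to a cover of $\Lambda$ with comparable partition sum and invoke the divergence of Carath\'eodory partition sums on $\Lambda$ at parameter $\alpha' < P(\varphi)$. The transfer rests on a stable-shadowing lemma: there exists $N_0 = N_0(r)$ such that every $z \in \Lambda$ admits $w_z \in X$ with $f^{N_0}(w_z) \in \Vl^s(z) \cap \Lambda$ and $d(f^{N_0}(w_z), z) \leq r/4$. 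To prove it, topological mixing gives $N_0$ with $f^{N_0}(X)$ being $r/8$-dense in $\Lambda$; for $z \in \Lambda$ pick $w' \in X$ with $d(f^{N_0}(w'), z) < r/8$; the Smale bracket $p := [z, f^{N_0}(w')] \in \Vl^s(z) \cap \Vl^u(f^{N_0}(w')) \cap \Lambda$; and the inclusion $f^{N_0}(\Vl^u(w')) \supset \Vl^u(f^{N_0}(w'))$ lets us set $w_z = f^{-N_0}(p) \in \Vl^u(w')$, with $w_z \in \Lambda$ by invariance and $w_z \in \Vl^u(x)$ for $\tau$ chosen large relative to $r$. Granted the lemma, uniform contraction along stable leaves (Proposition \ref{prop:local-mfds}) gives $d(f^{N_0+k}(w_z), f^k(z)) \leq \lambda^k r/4$, and H\"older continuity of $\varphi$ yields $|S_n\varphi(f^{N_0}(w_z)) - S_n\varphi(z)| \leq Q_s$ uniformly. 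Since $w_z \in X$ and $\mathcal{G}$ covers $X$, some $i(z) \in I$ has $w_z \in B_{n_{i(z)}}(y_{i(z)}, r)$, and the triangle inequality places $z \in B_{n_{i(z)} - N_0}(f^{N_0}(y_{i(z)}), 2r)$. Thus $\{B_{n_i - N_0}(f^{N_0}(y_i), 2r)\}_i$ covers $\Lambda$ at Bowen lengths $\geq N - N_0$; since $\alpha' < P(\varphi)$ and $r$ is below the expansivity constant, this cover's partition sum $\sum_i e^{-(n_i - N_0)\alpha' + S_{n_i - N_0}\varphi(f^{N_0}(y_i))}$ diverges as $N \to \infty$. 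Substituting $S_{n_i - N_0}\varphi(f^{N_0}(y_i)) = S_{n_i}\varphi(y_i) + O(1)$ and using $\alpha' > \alpha$, the original sum $\sum_i e^{-n_i\alpha + S_{n_i}\varphi(y_i)}$ dominates the transferred one up to a constant depending only on $N_0, \alpha', \|\varphi\|_\infty$, so it also diverges: $m_C(X, \alpha) = \infty$ and $\dim_C^r(X) \geq \alpha$.

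\emph{Main obstacle.} The technical heart is the stable-shadowing lemma. One must produce $w_z$ not merely in $\Vl^u(x)$ but in $X = \Vl^u(x) \cap \Lambda$, with $f^{N_0}(w_z)$ exactly on $\Vl^s(z) \cap \Lambda$; topological mixing (for a uniform $N_0$), the Smale bracket (to land on $\Vl^s(z)$), and local maximality of $\Lambda$ (to stay in $\Lambda$) are all essential, and the local-manifold size $\tau$ must be coordinated with $r$ so that the $f^{-N_0}$-image of the bracketed point remains inside $\Vl^u(x)$. After this is secured, the partition-sum comparison is routine bookkeeping with H\"older continuity and the definition of $m_C$.
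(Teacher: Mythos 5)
Your argument is essentially correct in outline, but it takes a genuinely different route from the paper. The paper does not bound the critical value directly; it first proves a uniform two-sided partition-sum estimate on unstable balls, $\Zspan_n(B_\L^u(x,r_1),\ph,r_2)=C^{\pm1}e^{nP(\ph)}$ (Proposition \ref{prop:uniform}), via the submultiplicativity lemmas of \S\ref{sec:lemmas} together with holonomy/transitivity arguments for moving spanning and separated sets between leaves, and then deduces $K^{-1}\le m_x^\C(X)\le K$ by converting an arbitrary cover by $u$-Bowen balls of mixed orders into a single $(n,r)$-spanning set. The identity $\dim_C^r(X)=P(\ph)$ then falls out of Proposition \ref{prop:C-dim}. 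You instead transfer covers between $X$ and $\Lambda$ (upper bound: push centers of a spanning set of $\Lambda$ into $X$; lower bound: your stable-shadowing lemma) and outsource the combinatorial core to the known identification of the Carath\'eodory pressure of $\Lambda$ with the classical pressure. This buys brevity, and your shadowing lemma is sound (it is essentially the mechanism of \cite[Lemma 6.4]{CPZ}); but note that the paper's self-contained route yields strictly more, namely the uniform finiteness and positivity of $m_x^\C(X)$ at the critical parameter, which is what Theorems \ref{thm:finite}--\ref{thm:holonomy} actually consume downstream. Your argument determines only the critical value.

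Two points need repair. First, the step ``since $\alpha'<P(\ph)$ and $r$ is below the expansivity constant, the transferred cover's partition sum diverges'' is the one place where you lean on something not literally in the cited literature: \cite[Theorem 11.5]{pes97} gives $\ulim_{\rho\to0}\dim_C^\rho(\Lambda)=P(\ph)$, and since $\dim_C^\rho(\Lambda)$ is non-increasing in $\rho$, this only yields $\dim_C^{2r}(\Lambda)\le P(\ph)$ --- the wrong direction for you. To conclude $m_C(\Lambda,\alpha')=\infty$ at the fixed radius $2r$ you must supply the expansivity argument (a cover by $(n,2r)$-Bowen balls refines, after discarding boundedly many iterates, to a cover by $(n-k,\rho)$-Bowen balls with controlled partition sums), or else reprove the chaining/counting lower bound on $\Lambda$ at fixed radius; either way this is precisely the kind of work the paper's Proposition \ref{prop:uniform} packages, so it cannot simply be cited away. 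Second, a uniform $N_0$ with $f^{N_0}(X)$ being $r/8$-dense requires topological mixing, whereas the standing hypothesis is transitivity; you should either invoke the spectral decomposition to reduce to the mixing case, or replace the single time $N_0$ by a uniformly bounded family of times $k_z\le N_0$, at the cost of a factor $N_0$ in the partition-sum comparison. Both issues are fixable with standard arguments, but as written they are gaps.
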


Note that on each $X = \Vl^u(x)\cap \Lambda$, covers by Bowen balls $B_n(x,r)$ are the same thing as covers by $u$-Bowen balls $B_n^u(x,r) = B_n(x,r) \cap \Vl^u(x)$, which we used in \S\ref{sec:car}.  Thus when we put $\alpha = P(\ph)$, we see that \eqref{eqn:mCP2} agrees with \eqref{car0} for every $Z \subset X$, and in particular, the quantity $m_x^\C(Z)$ defined in \eqref{car0} is the outer measure on $X$ associated to the $C$-structure above and the parameter value $\alpha=P(\ph)$.  

One must still do some work to show that this outer measure is finite and nonzero; this is done in \cite{CPZ}, and the idea of the argument is given in \S\ref{sec:measures} below.
We conclude this section by observing that the issue raised in Remark \ref{rmk:not-metric} is not a problem here, and that we have in fact defined a \emph{metric} outer measure.  Indeed, given any $x\in \Lambda$ and $y\in \Vl^u(x) \cap \Lambda$, we have $\diam B_n^u(y,r) \leq r \lambda^n$ for all $n\in \NN$ by Proposition \ref{prop:local-mfds}, so if $E,F\subset X$ have $d(E,F)>0$, then there is $N\in \NN$ such that $B_n(y,r) \cap B_k(z,r) = \emptyset$ whenever $y\in E$, $z\in F$, and $k,n\geq N$.  
Then for $N$ sufficiently large, any $\mathcal{G}$ as in \eqref{eqn:mCP2} has the property that it splits into disjoint covers of $E$ and $F$, and thus $m_x^\C(E \cup F) = m_x^\C(E) + m_x^\C(F)$.  By Proposition \ref{prop:metric-borel}, $m_x^\C$ defines a Borel measure on $X$, as claimed in Theorem \ref{thm:finite}.

\subsection{An application: measures of maximal dimension}
If $X$ is a measurable space with a measure $\mu$, and $\dim_C$ is a Carath\'eodory dimension on $X$, then the quantity
\begin{align*}
\dim_C\mu&=\inf\{\dim_CZ\colon \mu(Z)=1\}\\
&=\lim_{\delta\to 0}\inf\{\dim_CZ\colon \mu(Z)>1-\delta\}
\end{align*}
is called the \emph{Carath\'eodory dimension} of $\mu$. We say that $\mu$ is a \emph{measure of maximal Carath\'eodory dimension} if
$\dim_C\mu=\dim_CX$. Note that if the Carath\'eodory measure 
$m_C(X,\alpha)$ at dimension $\alpha=\dim_CX$ is finite and positive, then this measure is a measure of maximal Carath\'eodory dimension.

With $f\colon \L\to\L$ as in Theorem \ref{thm:finite},
we consider a particular but important family of potential functions 
$\phigeo_t(x)$ on $\L$, called the \emph{geometric $t$-potentials}: for any $t\in\mathbb{R}$
\[
\phigeo_t(x):=-t\log |\det Df|_{E^u(x)}|.
\] 
Since the subspace $E^u(x)$ depends H\"older continuously on $x\in\L$ (see \eqref{holder}), for each $t\in\mathbb{R}$ the function $\phigeo_t(x)$ is H\"older continuous and hence, it admits a unique equilibrium state $\mu_t:=\mu_{\phigeo_t}$. 

We consider the function $P(t):=P(\phigeo_t)$ called the \emph{pressure function}. One can show that this function is monotonically decreasing, convex and real analytic in $t$. Moreover, $P(t)\to +\infty$ as $t\to-\infty$ and $P(t)\to-\infty$ as $t\to+\infty$ with 
$P(1)\le 0$. Therefore, there is a number $0<t_0\le 1$ which is the unique solution of \emph{Bowen's equation} $P(t)=0$. We shall show that given $x\in\L$, there is a $C$-structure on the set $X=\Vl^u(x)\cap\L$ with respect to which $t_0$ is the Carath\'eodory dimension of the set $X$. Indeed, since $P(t_0)=0$, the measure $m_x^{t_0} :=m_x^\C$, given by \eqref{car0} for $\C = (\phigeo_{t_0},r)$, can be written as  
\begin{equation}\label{t0}
m_x^{t_0}(Z)=\lim_{N\to\infty}\inf\left\{\sum_i\bigg(\prod_{k=0}^{n_i-1}\det\big(Df|_{E^u(f^k(x_i))}\big)\bigg)^{-t_0}\right\},
\end{equation}
where the infimum is taken over all collections $\{B^u_{n_i}(x_i,r)\}$ of $u$-Bowen balls with $x_i\in X$, $n_i\ge N$, which cover $Z$.

Relation \eqref{t0} shows that the measure $m_x^{t_0}$ is the Carath\'eodory measure generated by the $C$-structure $\tau'=(\mathcal{S},\mathcal{F},\xi',\eta',\psi)$,
where
\[
\xi'(x,n) := 1, \qquad \eta'(x,n) := \prod_{k=0}^{n_i-1}\det\big(Df|_{E^u(f^k(x_i))}\big)^{-1}.
\]
It is easy to see that with respect to the $C$-structure $\tau'$ we have that
$\dim_{C,\tau'}X=t_0$ and the measure $m_x^{t_0}=m_{C,\tau'}(\cdot, t)$ is the measure of maximal Carath\'eodory dimension. In particular, the Carath\'eodory dimension of $X=\Vl^u(x)\cap\L$ does not depend on the choice of the point $x\in\L$. It is also clear that the number $t_0$ depends continuously on $f$ in the $C^1$ topology and hence, so does the Carath\'eodory dimension $\dim_{C,\tau'}X$.

We consider the particular case when the map $f$ is $u$-conformal; that is 
$Df_{|E^u(x)}=a(x)\mathrm{Isom}_x$ for all $x\in\L$, where $\mathrm{Isom}_x$ is an isometry. The direct calculation involving \eqref{t0} shows that in this case $m_x^{t_0}$ is a measure of full Hausdorff dimension and that $t_0\dim E^u=\dim_HX$. 

Given a locally maximal hyperbolic set $\Lambda$, it has been a long-standing open problem to compute the Hausdorff dimension of the set $X=\Vl^u(x)\cap\L$ and to find an invariant measure whose conditional measures on unstable leaves have maximal Hausdorff dimension, provided such a measure exists.
The above result solves this problem for $u$-conformal diffeomorphisms. The reader can find the original proof and relevant references in \cite{pes97}. 
It was recently proved that without the assumption of $u$-conformality, there are examples for which there is no invariant measure whose conditionals have full Hausdorff dimension; see \cite{DS}.
Theorem \ref{thm:main} provides one way to settle the issue in the non-conformal case by replacing `measure of maximal Hausdorff dimension' with `measure of maximal Carath\'eodory dimension' with respect to the $C$-structure $\tau'$ just described.

\section{Outline of proofs}\label{sec:proofs}

In \S\S\ref{sec:measures}--\ref{sec:scaling} we outline the proofs of Theorems \ref{thm:finite}--\ref{thm:holonomy}, referring to \cite{CPZ} for complete details; see Remarks 2.3 and 4.1 of that paper for an explanation of why the setting here is covered.  In \S\ref{sec:geometric} we prove Theorem \ref{thm:main}, again referring to \cite{CPZ} for certain technicalities.  In \S\ref{sec:push-cond} we give a complete proof of Theorem \ref{push-cond1}.

\subsection{Reference measures are nonzero and finite}\label{sec:measures}

Recall that $\Lambda$ is a locally maximal hyperbolic set for $f$, on which each $x$ has local stable and unstable manifolds of size $\tau>0$. We assume that $f|_\Lambda$ is topologically transitive. In what follows we occasionally use the following notation: given 
$A,B,C,a\ge 0$, we write $A=C^{\pm a}B$ as shorthand to mean 
$C^{-a}B\le A\le C^a B$. 
The key to the proof of Theorem \ref{thm:finite} is the following result.

\begin{proposition}\label{prop:uniform}
For every $r_1\in (0,\tau)$ and $r_2 \in (0,\tau/3]$ there is $C>1$ such that for every $x\in \Lambda$ and $n\in\NN$ we have
\begin{equation}\label{eqn:uniform}
\Zspan_n(B_\L^u(x,r_1),\ph,r_2) =  C^{\pm 1} e^{nP(\ph)}.
\end{equation}
\end{proposition}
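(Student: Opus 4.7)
The plan is to sandwich $\Zspan_n(B_\L^u(x,r_1),\ph,r_2)$ between $C^{-1}e^{nP(\ph)}$ and $Ce^{nP(\ph)}$ with $C$ independent of both $x\in\L$ and $n\in\NN$. By the spectral decomposition recalled at the end of \S\ref{sec:hyp-sets}, I reduce to the topologically mixing case. I will rely on three standard consequences of hyperbolicity: the Bowen property for the H\"older potential $\ph$ (there is $Q>0$ with $|S_n\ph(y)-S_n\ph(z)|\le Q$ whenever $d_n(y,z)\le\tau$; cf.\ Lemma \ref{lem:bowen}), the specification property of $f|_\L$, and the local product structure via the Smale bracket. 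A preliminary consequence of the first two ingredients, established by a standard Fekete-type argument applied to $\log\Zsep_n(\L,\ph,r)$, is the uniform global estimate $\Zspan_n(\L,\ph,r)=C_0^{\pm 1}e^{nP(\ph)}$, which I shall use freely.

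For the upper bound in \eqref{eqn:uniform}, take a near-minimizing $(n,r_2/2)$-spanning set $E$ of $\L$; keep only those $y\in E$ for which $B_n(y,r_2/2)\cap B_\L^u(x,r_1)\ne\emptyset$ and select one representative $\tilde y$ in each such intersection. The triangle inequality ensures that $\tilde E:=\{\tilde y\}$ is $(n,r_2)$-spanning for $B_\L^u(x,r_1)$, while the Bowen property gives $S_n\ph(\tilde y)\le S_n\ph(y)+Q$. Consequently $\Zspan_n(B_\L^u(x,r_1),\ph,r_2)\le e^Q\Zspan_n(\L,\ph,r_2/2)$, and the global estimate delivers the upper bound.

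For the lower bound I show that $B_\L^u(x,r_1)$ carries a $d_n$-separated set of essentially maximal weight. Starting from a near-maximizing $(n,r')$-separated set $E\subset\L$ (for a suitable $r'>r_2$), specification provides for each $y\in E$ a point $z_y\in\L$ with $d(z_y,x)<r_1/2$ and $d(f^{p+j}z_y,f^j y)<r'/10$ for $0\le j<n$, where $p=p(r')$ is a uniform gap. Applying the Smale bracket, $\hat z_y:=[z_y,x]$ lies in $\Vl^u(x)\cap\Vl^s(z_y)\cap\L\subset B_\L^u(x,r_1)$; by the stable contraction of Proposition \ref{prop:local-mfds}\ref{leaves-contract}, the forward orbits of $\hat z_y$ and $z_y$ remain exponentially close, so (after enlarging $p$ if necessary) $\hat z_y$ shadows $y$ to within $r'/5$. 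The map $y\mapsto\hat z_y$ is injective, and $d_n$-separation of $E$ at scale $r'$ translates into $d_{n+p}$-separation of $\{\hat z_y\}$ at scale $3r'/5$; choosing $r'$ so that $3r'/5\ge r_2$ yields an $(n+p,r_2)$-separated subset of $B_\L^u(x,r_1)$. The Bowen property then gives $S_{n+p}\ph(\hat z_y)\ge S_n\ph(y)-Q'$ for a uniform $Q'$, so the total weight of $\{\hat z_y\}$ is bounded below by a constant multiple of $e^{-pP(\ph)}\Zsep_n(\L,\ph,r')\ge C^{-1}e^{nP(\ph)}$; absorbing the bounded shift by $p$ into the final constant completes the estimate. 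All constants depend only on system-wide data, so uniformity in $x$ is automatic.

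The main obstacle is the lower bound: one must transport a near-maximizing separated set from all of $\L$ into the tiny piece $B_\L^u(x,r_1)$ of a single unstable leaf while preserving both the $d_n$-separation and the exponential weight. Coordinating specification (to bring orbits near $x$), the Smale bracket (to project onto $\Vl^u(x)$), stable contraction (so shadowing survives the projection), and the Bowen property (to convert orbit closeness into additive closeness of Birkhoff sums) is the crux of the argument.
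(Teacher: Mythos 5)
Your argument is essentially correct, but it follows a genuinely different route from the paper's, and the difference matters for the paper's purpose. You prove the lower bound by taking a near-maximizing separated set on all of $\L$, using the \emph{specification property} to bring each orbit near $x$, and then projecting onto $\Vl^u(x)$ with the Smale bracket; you also take the global two-sided estimate $\Zspan_n(\L,\ph,r)=C_0^{\pm1}e^{nP(\ph)}$ as input, which is itself Bowen's specification-based result from \cite{rB745}. This is a clean localization of the classical machinery, and the remaining issues (the scale bookkeeping forcing $r'\gtrsim r_2$, the shift from $n+p$ to $n$, the passage between $\Zsep$ and $\Zspan$ on the leaf) are standard. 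The paper, however, deliberately avoids specification: its proof establishes near-sub- and super-multiplicativity of the \emph{leafwise} partition sums directly, by iterating within unstable leaves (choosing, for each point of an $(n,r_2)$-separated set in $B_\L^u(x,r_1)$, an $(m,r_2)$-separated set in $B_\L^u(f^ny_i,r_2)$ and pulling back), and then applies the Fekete-type Lemmas \ref{lem:mult-bound} and \ref{lem:mult-bound-2} to the leafwise sums themselves. The comparison between partition sums on different leaves and at different scales is handled using only topological transitivity and stable holonomy, not specification, and the uniform global estimate is then a \emph{consequence} of the leafwise one (Proposition \ref{prop:P-same}) rather than an ingredient. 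What your route buys is brevity, given the classical toolkit; what the paper's route buys is exactly what the introduction advertises: a proof whose mechanism survives in settings without specification (and this is the version that extends to the partially hyperbolic situations treated in \cite{CPZ}). So your proof is a valid alternative for topologically mixing locally maximal hyperbolic sets, but it would not serve as a replacement for the paper's argument without reintroducing the dependence on specification that the paper is designed to eliminate.
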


Similar partition sum bounds are obtained in Bowen's paper \cite{rB745}, where they are proved for all of $\L$ instead of for a single unstable leaf. For the full proof of Proposition \ref{prop:uniform}, see \cite[\S6]{CPZ}; we outline the argument below.  As in Bowen's case, the underlying mechanism is a set of elementary lemmas, which we give in \S\ref{sec:lemmas}.  In \S\ref{sec:multiplicative} we explain why it is reasonable to expect these lemmas to apply to the sequence $\Zspan_n$, and in \S\ref{sec:complete-finite} we outline how Proposition \ref{prop:uniform} leads to Theorem \ref{thm:finite}.

\subsubsection{Elementary counting lemmas}\label{sec:lemmas}

\begin{lemma}\label{lem:fekete}
If $Z_n>0$ is a sequence of numbers satisfying $Z_{n+m} \leq Z_n Z_m$ for all $m,n$, then $P = \lim_{n\to\infty} \frac 1n \log Z_n$ exists and is equal to $\inf_{n\in\NN} \frac 1n \log Z_n$.  In particular, $Z_n \geq e^{nP}$ for every $n$.
\end{lemma}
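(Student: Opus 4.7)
The statement is the classical Fekete lemma in disguise: taking logarithms turns the multiplicative subadditivity hypothesis $Z_{n+m}\le Z_n Z_m$ into ordinary subadditivity of the sequence $a_n := \log Z_n$, and the conclusion about $\frac{1}{n}\log Z_n$ is precisely the standard statement about $a_n/n$. So my plan is simply to reduce to the additive case and give the textbook argument.

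First I would set $a_n = \log Z_n$, observe that the hypothesis gives $a_{n+m} \le a_n + a_m$ for all $n,m \ge 1$, and define $P := \inf_{n \ge 1} a_n/n \in [-\infty,+\infty)$. The ``in particular'' clause in the lemma is then immediate: $\frac{1}{n}\log Z_n \ge P$ for every $n$, so $Z_n \ge e^{nP}$. It remains to show that $\lim_{n\to\infty} a_n/n$ exists and equals $P$.

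The core step is to show $\limsup_{n\to\infty} a_n/n \le P$, since the reverse inequality $\liminf_{n\to\infty} a_n/n \ge P$ is built into the definition of $P$. Fix $\varepsilon > 0$ and choose $m \ge 1$ with $a_m/m < P + \varepsilon$ (which is possible when $P > -\infty$; the case $P = -\infty$ is handled identically with $P + \varepsilon$ replaced by any chosen real number). For any $n \ge m$, write $n = qm + r$ with $0 \le r < m$ and iterate subadditivity:
\[
a_n \;=\; a_{qm+r} \;\le\; q\, a_m + a_r,
\]
with the convention that the $a_r$ term is omitted when $r = 0$. Dividing by $n$ gives
\[
\frac{a_n}{n} \;\le\; \frac{qm}{n}\cdot\frac{a_m}{m} + \frac{a_r}{n}.
\]
As $n \to \infty$ with $m$ fixed, $qm/n \to 1$ and $a_r/n \to 0$ (since $a_r$ takes only the finitely many values $a_1,\dots,a_{m-1}$). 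Hence $\limsup_{n\to\infty} a_n/n \le a_m/m < P + \varepsilon$, and letting $\varepsilon \to 0$ yields $\limsup a_n/n \le P$.

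There is no real obstacle here; the only mildly delicate point is the treatment of the remainder term $a_r/n$, which is handled by the observation that $r$ ranges over a finite set independent of $n$. Combining the two inequalities gives $\lim_{n\to\infty} a_n/n = P$, and re-exponentiating recovers the statement for $Z_n$.
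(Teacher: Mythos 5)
Your proposal is correct and is essentially the same argument as the paper's: both reduce to the additive Fekete lemma via $a_n=\log Z_n$, use the division $n=qm+r$ to get $\varlimsup_n a_n/n\le a_m/m$ for each fixed $m$, and then pass to the infimum (the paper infimizes over all $m$ at the end, while you pick a near-optimal $m$ up front — a cosmetic difference). Your explicit handling of the remainder term and of the degenerate case $P=-\infty$ is slightly more careful than the paper's, but the route is identical.
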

\begin{proof}
Fix $n\in \NN$; then for all $m\in \NN$ we can write $m=an+b$ where $a\in \NN$ and $b\in \{0,1,\dots, n-1\}$, and iterate the submultiplicativity property to obtain $Z_m \leq Z_n^a Z_b$.  Taking logs and dividing by $m$ gives
\[
\frac 1m \log Z_m \leq \frac am \log Z_n + \frac {\log Z_b}m
\leq \frac{an}{m} \cdot \frac 1n \log Z_n + \frac{\max\{\log Z_0, \dots \log Z_{n-1}\}}m.
\]
Sending $m\to\infty$ we see that $\frac{an}m\to 1$, so
\begin{equation}\label{eqn:ZmZn}
\varlimsup_{m\to\infty} \frac 1m \log Z_m \leq \frac 1n \log Z_n.
\end{equation}
Since $n$ was arbitrary we deduce that
\[
\varlimsup_{m\to\infty} \frac 1m \log Z_m
\leq \inf_{n\in\NN} \frac 1n \log Z_n \leq \varliminf_{n\to\infty} \frac 1n \log Z_n,
\]
whence all three terms are equal and the limit exists.  Now \eqref{eqn:ZmZn} implies that $Z_n \geq e^{nP}$.
\end{proof}

\begin{lemma}\label{lem:mult-bound}
If $Z_n>0$ is a sequence of numbers satisfying
$Z_{n+m} \leq C Z_n Z_m$ for all $m,n$, where $C>0$ is independent of $m,n$, then $P = \lim_{n\to\infty} \frac 1n \log Z_n$ exists and is equal to $\inf_{n\in\NN} \frac 1n \log(CZ_n)$.  In particular, $Z_n \geq C^{-1} e^{nP}$ for all $n$.
\end{lemma}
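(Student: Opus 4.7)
The plan is to reduce this to the submultiplicative case already handled by Lemma \ref{lem:fekete}. The natural trick is to absorb the constant $C$ into the sequence itself: set $W_n := C Z_n$, and check that the hypothesis $Z_{n+m} \leq C Z_n Z_m$ translates directly into
\[
W_{n+m} = C Z_{n+m} \leq C \cdot C Z_n Z_m = (CZ_n)(CZ_m) = W_n W_m,
\]
so $(W_n)$ is submultiplicative in the strict sense.

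Now I apply Lemma \ref{lem:fekete} to $(W_n)$ to obtain that $\lim_{n\to\infty} \frac{1}{n}\log W_n$ exists, equals $\inf_{n\in\NN} \frac{1}{n}\log W_n$, and furthermore $W_n \geq e^{nP}$ for every $n$, where $P$ denotes this common value. Since $\frac{1}{n}\log W_n = \frac{1}{n}\log(CZ_n) = \frac{\log C}{n} + \frac{1}{n}\log Z_n$ and the first summand tends to $0$, I conclude that $\lim_{n\to\infty} \frac{1}{n}\log Z_n$ exists and equals $P$. By the description of $P$ via the infimum applied to $W_n$, we have $P = \inf_{n\in\NN} \frac{1}{n}\log(CZ_n)$. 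Finally, unpacking $W_n \geq e^{nP}$ as $CZ_n \geq e^{nP}$ gives the claimed bound $Z_n \geq C^{-1} e^{nP}$.

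There is essentially no obstacle here; the only thing to double-check is that the substitution $W_n = CZ_n$ genuinely produces a submultiplicative sequence (it does, because the extra $C$ on the right-hand side of $Z_{n+m} \leq C Z_n Z_m$ is exactly what is needed to pair one factor of $C$ with each of $Z_n$ and $Z_m$). The whole argument is two or three lines once this observation is made, and no additional hypothesis on $(Z_n)$ is required.
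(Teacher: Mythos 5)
Your proposal is correct and is exactly the paper's argument: both apply Lemma \ref{lem:fekete} to the rescaled sequence $CZ_n$, which the extra factor of $C$ in the hypothesis makes genuinely submultiplicative. The bookkeeping with $\frac{1}{n}\log C \to 0$ and the unpacking of $CZ_n \geq e^{nP}$ are both fine.
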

\begin{proof}
Follows by applying Lemma \ref{lem:fekete} to the sequence $Y_n = CZ_n$, which satisfies $Y_{n+m} = CZ_{n+m} \leq C^2 Z_n Z_m = Y_n Y_m$.
\end{proof}

\begin{lemma}\label{lem:mult-bound-2}
If $Z_n>0$ is a sequence of numbers satisfying
$Z_{n+m} \geq C^{-1} Z_n Z_m$ for all $m,n$, where $C>0$ is independent of $m,n$, then $P = \lim_{n\to\infty} \frac 1n \log Z_n$ exists and is equal to $\sup_{n\in\NN} \frac 1n \log(Z_n/C)$.  In particular, $Z_n \leq C e^{nP}$ for all $n$.
\end{lemma}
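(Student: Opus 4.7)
The plan is to reduce directly to Lemma \ref{lem:mult-bound} by passing to reciprocals, rather than repeating the Fekete-style argument from scratch with reversed inequalities.

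First I would set $W_n := 1/Z_n > 0$. Rewriting the hypothesis $Z_{n+m} \geq C^{-1} Z_n Z_m$ in terms of $W$ yields $W_{n+m} \leq C W_n W_m$ for all $m,n$, which is exactly the submultiplicative-up-to-a-constant hypothesis of Lemma \ref{lem:mult-bound}. Applying that lemma to $W$ gives that $Q := \lim_{n\to\infty} \frac{1}{n} \log W_n$ exists, equals $\inf_{n\in\NN} \frac{1}{n}\log(CW_n)$, and satisfies $W_n \geq C^{-1} e^{nQ}$ for every $n$.

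Next I would translate back: since $\frac{1}{n}\log Z_n = -\frac{1}{n}\log W_n$, the existence of the limit for $W_n$ gives existence of $P := \lim_{n\to\infty} \frac{1}{n}\log Z_n = -Q$. Converting the infimum into a supremum using the sign change,
\begin{equation*}
P = -\inf_{n\in\NN} \frac{1}{n}\log(CW_n) = \sup_{n\in\NN} \Big(-\frac{1}{n}\log(CW_n)\Big) = \sup_{n\in\NN} \frac{1}{n}\log(Z_n/C),
\end{equation*}
which is the claimed characterization of $P$. Finally, the bound $W_n \geq C^{-1} e^{nQ}$ becomes $Z_n \leq C e^{-nQ} = C e^{nP}$, giving the last assertion.

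This reduction should go through without obstacle: the only thing to check is that the reciprocals are well-defined, which is immediate from the standing assumption $Z_n > 0$, and that the constants track correctly, which is a one-line computation. If one preferred a self-contained argument, one could instead mimic the proof of Lemma \ref{lem:fekete} applied to $Y_n := Z_n/C$ (which satisfies $Y_{n+m} \geq Y_n Y_m$), write $m = an + b$ with $0 \leq b < n$, iterate supermultiplicativity to get $Y_m \geq Y_n^a Y_b$, take logarithms, divide by $m$, and send $m\to\infty$ to obtain $\varliminf_{m\to\infty} \frac{1}{m}\log Y_m \geq \frac{1}{n}\log Y_n$ for every fixed $n$, which sandwiches the limit against $\sup_n \frac{1}{n}\log Y_n$. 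Either route is routine, so there is no real obstacle here; the lemma is simply the dual form of Lemma \ref{lem:mult-bound} and will be used in \cite{CPZ} to produce the matching upper bound in \eqref{eqn:uniform}.
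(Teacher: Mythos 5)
Your proposal is correct and is essentially the paper's own argument: the paper applies Lemma \ref{lem:fekete} directly to $Y_n = C/Z_n$, which is exactly what your reduction via Lemma \ref{lem:mult-bound} applied to $W_n = 1/Z_n$ unwinds to, since that lemma is itself Fekete applied to $C$ times the sequence. The constants and the translation of the infimum into a supremum all check out.
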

\begin{proof}
Follows by applying Lemma \ref{lem:fekete} to the sequence $Y_n = C/Z_n$, which satisfies $Y_{n+m} = C/Z_{n+m} \leq C^2/(Z_n Z_m) = Y_n Y_m$.
\end{proof}

\subsubsection{Partition sums are nearly multiplicative}\label{sec:multiplicative}

In light of Lemmas \ref{lem:mult-bound} and \ref{lem:mult-bound-2}, Proposition \ref{prop:uniform} can be proved by showing that the partition sums $\Zspan_n(B_\L^u(x,r_1),\ph,r_2)$ are `nearly multiplicative': $\Zspan_{n+m} = C^{\pm 1} \Zspan_n \Zspan_m$.
A short argument given in \cite[Lemma 6.3]{CPZ} shows that $\Zsep_n = e^{\pm Q_u} \Zspan_n$, and thus it suffices to show that
\[
\Zspan_{n+m} \leq C \Zspan_n \Zspan_m,
\qquad
\Zsep_{n+m} \geq C^{-1} \Zsep_n \Zsep_m,
\]
where we are being deliberately vague about the arguments of $\Zspan$ and $\Zsep$.

\begin{figure}[htbp]
\includegraphics[width=.85\textwidth]{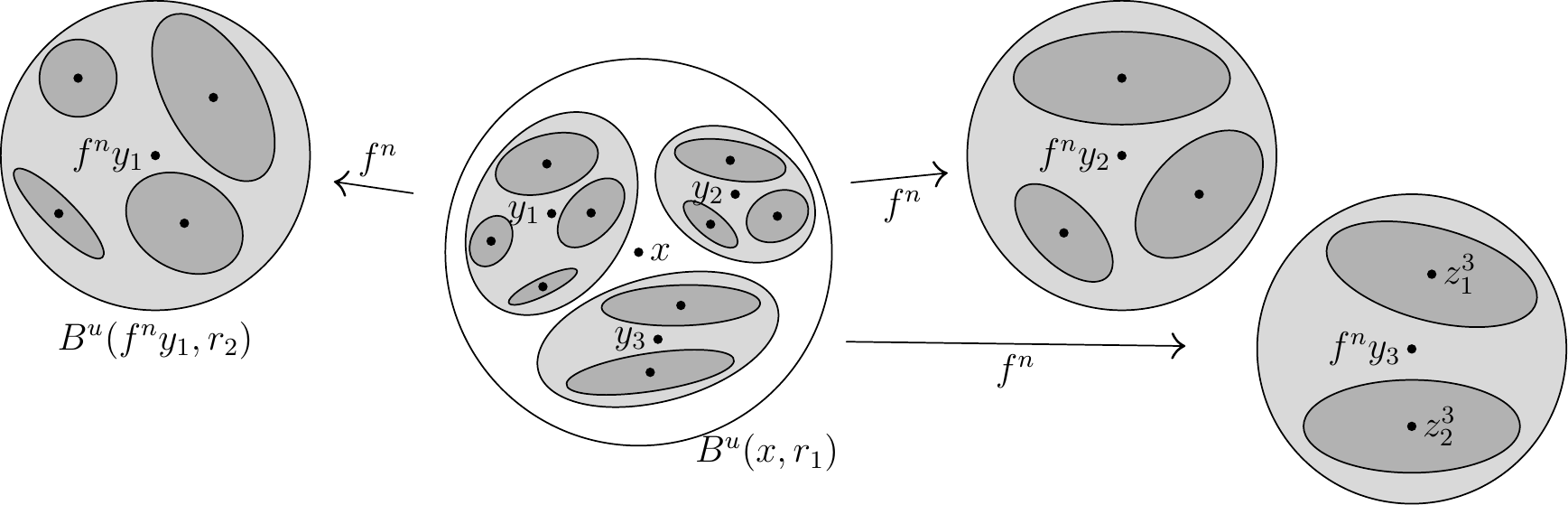}
\caption{Proving that $\Zsep_{n+m} \geq C^{-1} \Zsep_n \Zsep_m$.}
\label{fig:sepsep}
\end{figure}

Figure \ref{fig:sepsep} illustrates the idea driving the estimate for $\Zsep$: if $E_n = \{y_1,\dots, y_a\} \subset B_\L^u(x,r_1)$ is a maximal $(n,r_2)$-separated set of points, and to each $1\leq i\leq a$ we associate a maximal $(m,r_2)$-separated set $E_m^i = \{z_1^i,\dots, z_{b_i}^i\} \subset B_\L^u(f^n y_i, r_2)$, then pulling back all the points $z_j^i$ gives an $(m+n,r_2)$-separated set
\[
E_{m+n} = \bigcup_{i=1}^a f^{-n}(E_m^i) = \{f^{-n} z_j^i : 1\leq i\leq a, 1\leq j \leq b_i\} \subset B_\L^u(x,r_1),
\]
so we expect to get an estimate along the lines of
\begin{multline}\label{eqn:rough}
\Zsep_{m+n} \geq \sum_{p\in E_{m+n}} e^{S_{m+n} \ph(p)}
= \sum_{p\in E_{m+n}} e^{S_n \ph(p)} e^{S_m\ph(f^n(p))}
\gtrapprox \sum_{i=1}^a \sum_{j=1}^{b_i} e^{S_n \ph(y_i)} e^{S_m\ph(z_j^i)} \\
= \sum_{i=1}^a e^{S_n\ph(y_i)} \bigg( \sum_{j=1}^{b_i} e^{S_m \ph(z_j^i)} \bigg)
\approx \bigg( \sum_{i=1}^a e^{S_n\ph(y_i)} \bigg) \Zsep_m \approx \Zsep_m \Zsep_n,
\end{multline}
where we continue to be deliberately vague about the arguments of $\Zsep$.  If we can make this rigorous, then a similar argument with spanning sets instead of separated sets will lead to $\Zspan_{m+n} \leq C \Zspan_m \Zspan_n$, which will prove Proposition \ref{prop:uniform}.

But how do we make \eqref{eqn:rough} rigorous?  There are two sources of error which are hinted at by the ``$\approx$'' symbols.
\begin{enumerate}
\item Given  $p\in E_{m+n}$ and the corresponding $y\in E_n$ (where $f^n(p)\in B_\L^u(f^n y,r_2)$), the approximation on the first line of \eqref{eqn:rough} requires us to compare the ergodic sums $S_n\ph(p)$ and $S_n\ph(y)$.  In particular, we must find a constant $Q_u$ (independent of $y,p,n$) such that $|S_n\ph(p) - S_n\ph(y)| \leq Q_u$ whenever $p \in B_n^u(y,r_2)$.
\item The omission of the arguments for $\Zsep_n$ obscures the fact that $\Zsep_{m+n}$ and $\Zsep_m$ in \eqref{eqn:rough} both refer to $(n,r_2)$-separated subsets of $B_\L^u(x,r_1)$, while $\Zsep_n$ refers to $(n,r_2)$-separated subsets of $B_\L^u(f^n y_i, r_2)$.  Thus we must control how $\Zsep_n(B_\L^u(x,r_1),\ph,r_2)$ changes when we fix $n$ and let $x,r_1,r_2$ vary; in particular, we must find for each $r_1,r_1',r_2,r_2'$ a constant $C$ such that for every $n,x,y$, we have
\[
\Zsep_n(B_\L^u(x,r_1),\ph,r_2) = C^{\pm 1}\Zsep_n(B_\L^u(y,r_1'),\ph,r_2').
\]
\end{enumerate}

The first source of error described above can be controlled by establishing a generalized version of property \eqref{eqn:bdd-dist}. 

\begin{definition}\label{def:u-s-Bowen}
We say that a potential $\ph\colon \Lambda\to\RR$ has the \emph{$u$-Bowen property} if there is $Q_u>0$ such that for every $x\in \Lambda$, $n\geq 0$, and $y\in B_n^u(x,\tau) \cap \Lambda$, we have $|S_n\ph(x) - S_n\ph(y)| \leq Q_u$.  We also say that $\ph$ has the \emph{$s$-Bowen property} if there is $Q_s>0$ such that for every $x\in \Lambda$, $n\geq 0$, and $y\in B_\Lambda^s(x,\tau)=B^s(x,\tau)\cap\Lambda$, we have $|S_n\ph(x) - S_n\ph(y)| \leq Q_s$.\footnote{The asymmetry in the definition comes because $S_n\ph$ is a forward Birkhoff sum and $B_n^u(x,\tau)$ is defined in terms of forward iterates; one could equivalently define the $s$-Bowen property in terms of backward Birkhoff sums and $s$-Bowen balls.  The $s$-Bowen property is needed to control the second source of error described above.}
\end{definition}
\begin{lemma}\label{lem:bowen}
If $\ph\colon\Lambda\to\RR$ is H\"older continuous, then $\ph$ has the $u$-Bowen property and the $s$-Bowen property.\footnote{This is the only place where H\"older continuity is used; in particular, H\"older continuity could be replaced by the $u$- and $s$-Bowen properties in all our main results.}
\end{lemma}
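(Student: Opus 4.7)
The plan is to combine Hölder continuity of $\ph$ with the exponential contraction statements in Proposition \ref{prop:local-mfds}\ref{leaves-contract}, and then to sum a geometric series. I will fix a Hölder exponent $\gamma > 0$ and a constant $H > 0$ so that $|\ph(a) - \ph(b)| \leq H\,d(a,b)^\gamma$ for all $a,b \in \Lambda$; both constants $Q_u$ and $Q_s$ will come out as explicit multiples of $H\tau^\gamma/(1 - \lambda^\gamma)$.

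I would handle the $s$-Bowen case first, since it is the cleaner of the two. Given $y \in B^s(x,\tau) \cap \Lambda$, the invariance $f(\Vl^s(x)) \subset \Vl^s(f(x))$ from Proposition \ref{prop:local-mfds}(3) guarantees that $f^k(y) \in \Vl^s(f^k x)$ for every $k \geq 0$, which is precisely what is needed to iterate Proposition \ref{prop:local-mfds}\ref{leaves-contract} and obtain $d(f^k x, f^k y) \leq \lambda^k d(x,y) \leq \lambda^k \tau$. Then Hölder continuity will turn the Birkhoff difference into a geometric sum
\[
|S_n\ph(x) - S_n\ph(y)| \leq H \sum_{k=0}^{n-1} (\lambda^k \tau)^\gamma \leq \frac{H\tau^\gamma}{1 - \lambda^\gamma},
\]
independently of $n$, which yields $Q_s := H\tau^\gamma/(1-\lambda^\gamma)$.

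For the $u$-Bowen case I would run essentially the same argument, but backward in time. Given $y \in B_n^u(x,\tau) \cap \Lambda$, the identity $B_n^u(x,\tau) = f^{-n}B^u(f^n x, \tau)$ from the definition, combined with invariance, shows that $f^k(y) \in \Vl^u(f^k x)$ and $d(f^k x, f^k y) \leq \tau$ for $0 \leq k \leq n-1$. Proposition \ref{prop:local-mfds}\ref{leaves-contract} can then be iterated backward from index $n-1$ to give $d(f^k x, f^k y) \leq \lambda^{n-1-k}\tau$, and applying Hölder continuity and summing the (reversed) geometric series produces the same bound $H\tau^\gamma/(1-\lambda^\gamma)$, so I can take $Q_u := Q_s$.

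The hardest part is essentially nonexistent: the only point requiring any care is verifying that each iterate $f^k(y)$ lies in the appropriate local leaf so that Proposition \ref{prop:local-mfds}\ref{leaves-contract} can be applied repeatedly. This is automatic in the stable case from invariance, and in the unstable case it is encoded in the characterization $B_n^u(x,\tau) = f^{-n}B^u(f^n x,\tau)$. Once that observation is in place the proof reduces to a uniformly bounded geometric sum, and in particular Hölder continuity is the only regularity of $\ph$ that enters — as noted in the footnote, the Bowen conditions themselves could replace the hypothesis in all the main results.
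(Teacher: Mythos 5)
Your proposal is correct and follows essentially the same route as the paper's proof: combine H\"older continuity of $\ph$ with the uniform contraction of Proposition \ref{prop:local-mfds}\ref{leaves-contract} (forward along stable leaves, backward along unstable leaves) and sum the resulting geometric series to get $Q_{u},Q_s \le H\tau^\gamma(1-\lambda^\gamma)^{-1}$. The only cosmetic difference is that you write out the stable case and deduce the unstable case by symmetry, whereas the paper does the reverse.
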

\begin{proof}
We prove the $u$-Bowen property; the proof of the $s$-Bowen property is similar.  Given $y\in B_n^u(x,\tau)$, for every $0\leq k < n$ we have $d(x,y) \leq \tau \lambda^{n-k}$
where $0<\lambda <1$ is as in Proposition \ref{prop:local-mfds}\eqref{leaves-contract}, so writing $\beta$ for the H\"older exponent of $\ph$, we have
\begin{align*}
|S_n\ph(x) - S_n\ph(y)| &\leq \sum_{k=0}^{n-1} |\ph(f^k x) - \ph(f^k y)| 
\leq \sum_{k=0}^{n-1} |\ph|_\beta d(f^kx,f^ky)^\beta \\
&\leq |\ph|_\beta \tau^\beta \sum_{k=0}^{n-1} \lambda^{\beta(n-k)}
< |\ph|_\beta\tau^\beta(1-\lambda^\beta)^{-1} =: Q_u.\qedhere
\end{align*}
\end{proof}

To control the second source of error described above, the first main idea is that topological transitivity guarantees that for every $\delta>0$, the images $f^k(B_\L^u(y,\delta))$ eventually come within $\delta$ of $x$, and that the $k$ for which this occurs admits an upper bound that depends only on $\delta$.  Then given a spanning set $E \subset B_\L^u(y,\delta)$, the part of the image $f^k(E)$ that lies near $x$ can be moved by holonomy along stable manifolds to give a spanning set in the unstable leaf of $x$.  This is made precise in \cite[Lemma 6.4]{CPZ}.  One can use similar arguments to change the scales $r_1, r_2$; for example, if $x,y$ are on the same local unstable leaf and have orbits that remain within $r_2$ of each other until time $n$, then they remain within $r_2 \lambda^k$ of each other until time $n-k$.  See \cite[\S6]{CPZ} for full details.

\subsubsection{Proving Theorem \ref{thm:finite}}\label{sec:complete-finite}

Fix $x\in\Lambda$ and set $X:=\Vl^u(x) \cap \Lambda$.
We showed in \S\ref{car.measure} that $m_x^\C$ defines a metric outer measure on $X$, and hence gives a Borel measure. Note that the final claim in Theorem \ref{thm:finite} about agreement on intersections is immediate from the definition. Thus it remains to prove that $m_x^\C(X) \in [K^{-1},K]$, where $K$ is independent of $x$; this will complete the proof of Theorem \ref{thm:finite}, and will also prove Proposition \ref{prop:P-same}.

For full details, see \cite[\S6.5]{CPZ}.  The idea is that it suffices to prove that for a fixed $r>0$, we have $m_x^\C(B_\L^u(x,r))$ uniformly bounded away from $0$ and $\infty$, since each $\Vl^u(x)$ can be covered with a uniformly finite number of balls $B_\L^u(y,r)$.  The upper bound is easier to prove since it only requires that we exhibit a cover satisfying the desired inequality; this is provided by Proposition \ref{prop:uniform}, which guarantees existence of an $(n,r)$-spanning set $E_n\subset B_\L^u(x,r)$ such that 
\[
\sum_{y\in E_n} e^{S_n\ph(y)} \leq C e^{nP(\ph)},
\]
and thus \eqref{car0} gives
\[
m_x^\C(B_\L^u(x,r)) \leq \lim_{n\to\infty} \sum_{y\in E_n} e^{-nP(\ph)} e^{S_n\ph(y)} \leq C.
\]
The lower bound is a little trickier since we must obtain a lower bound for an \emph{arbitrary} cover by $u$-Bowen balls as in \eqref{car0}, which are allowed to be of different orders, so we do not immediately get an $(n,r)$-spanning set for some particular $n$.  This can be resolved by observing that any open cover of $\overline{B_\L^u(x,r)}$ has a finite subcover, so to bound $m_x^\C(\overline{B_\L^u(x,r)})$ it suffices to consider covers of the form $\{B_{n_i}^u(y_i,r) : 1\leq i\leq a, n_i \geq N\}$.  Given such a cover, one can take $n = \max(n_1,\dots, n_a)$ and use arguments similar to those in the proof of Proposition \ref{prop:uniform} to
cover each $B_{n_i}^u(y_i,r)$ by a union of $u$-Bowen balls $B_n^u(z_i^j,r)$ ($1\leq j \leq b_i$)
satisfying
\[
\sum_j e^{S_n\ph(z_i^j)} \leq C' e^{(n-n_i)P(\ph)} e^{S_{n_i}\ph(y_i)}
\]
for some constant $C'$ that is independent of our choice of covers.
Then the set $E = \{z_i^j : 1\leq i \leq a, 1\leq j\leq b_i\}$ is $(n,r)$-spanning for $B_\L^u(x,r)$ and satisfies
\[
C^{-1} e^{nP(\ph)} \leq \sum_{z\in E} e^{S_n\ph(z)}
\leq \sum_{i=1}^a C' e^{(n-n_i)P(\ph)} e^{S_{n_i}\ph(y_i)}.
\]
Dividing through by $e^{nP(\ph)}$, taking an infimum over all covers, and sending $N\to\infty$ gives $C^{-1} \leq C' m_x^\C(\overline{B_\L^u(x,r)})$.  Again, full details are in \cite[\S6.5]{CPZ}.

\subsection{Behavior of reference measures under iteration and holonomy}\label{sec:scaling}

\subsubsection{Iteration and the $u$-Gibbs property}


The simplest case of Theorem \ref{thm:Gibbs} occurs when $\ph=0$, so the claim is that $m_{f(x)}^\C = e^{P(0)} m_x^\C \circ f^{-1}$, which is exactly the scaling property satisfied by the Margulis measures on unstable leaves.  Given $E\subset \Vl^u(f(x))$, we see from the relationship $f^{-1}B_n^u(y,r) = B_{n+1}^u(f^{-1}y,r)$ that any cover $\{B_{n_i}^u(y_i,r)\}$ of $E$ leads immediately to a cover $\{B_{n_i+1}^u(f^{-1}y_i,r)\}$ of $f^{-1}E$, and vice versa.  Using this bijection in the definition of the reference measures in \eqref{car0}, we get
\[
m_{f(x)}^\C(E) = \lim_{N\to\infty} \inf \sum_i e^{-n_i P(0)} = e^{P(0)}\lim_{N\to\infty} \inf \sum_i e^{-(n_i+1) P(0)} = e^{P(0)} m_x^\C(f^{-1}E).
\]
For nonzero potentials one must account for the factor of $e^{S_{n_i}\ph(x_i)}$ in \eqref{car0}.  This can be done by partitioning $E$ into subsets $E_1,\dots, E_T$ on which $\ph$ is nearly constant, and repeating the above argument on each $E_i$ to get an approximate result that improves to the desired result as $T\to\infty$; see \cite[\S7.1]{CPZ} for details.

Once \eqref{eqn:scaling} has been proved, we can iterate it to obtain
\begin{equation}\label{eqn:iterated-scaling}
m_{f^n(x)}^\C(A) = \int_{f^{-n}(A)} e^{nP(\ph) - S_n\ph(y)}\,dm_x^\C(y)
\end{equation}
for all $A\subset \Vl^u(f^n(x))$.  Applying this to $A=B^u(f^n(x),\delta) = f^n(B_n^u(x,\delta))$ and using Theorem \ref{thm:finite} gives a constant $Q_4=Q_4(\delta)$ such that
\begin{align*}
m_x^\C(B_n^u(x,\delta)) e^{nP(\ph) - S_n\ph(x)} &= e^{\pm Q_u} \int_{B_n^u(x,\delta)} e^{nP(\ph)-S_n\ph(y)}\,dm_x^\C(y) \\
&=e^{\pm Q_u} m_{f^n(x)}^\C(B^u(f^n(x),\delta)) =
e^{\pm Q_u} Q_4^{\pm 1},
\end{align*}
for every $x,n$, 
where the first estimate uses 
the $u$-Bowen property from Lemma \ref{lem:bowen}.  
This establishes the $u$-Gibbs property for $m_x^\C$ with $Q_1 = Q_4 e^{Q_u}$ and proves Corollary \ref{cor:Gibbs}.

\subsubsection{Holonomy maps}

Given nearby points $y,z$ and sets $E_y\subset \Vl^u(y)$, $E_z \subset \Vl^u(z)$ such that $\pi_{yz}(E_y) = E_z$ (with respect to some rectangle), we observe that every cover of $E_y$ by $u$-Bowen balls $\{B_{n_i}^u(x_i,r)\}$ produces a cover of $E_z$ by the images $\{\pi_{yz}B_{n_i}^u(x_i,r)\}$.
If $y,z$ are close enough to each other to guarantee that
\begin{equation}\label{eqn:piyz}
\pi_{yz}B_{n_i}^u(x_i,r) \subset B_{n_i}^u(x_i,2r)
\end{equation}
for each $i$,
then we get $E_z \subset \bigcup_i B_{n_i}^u(x_i,2r)$.  
Fixing $k\in \NN$ such that each $x\in \L$ has $B_\L^u(x,2r) \subset \bigcup_{j=1}^k B_\L^u(x^j,r)$ for some $x^1,\dots, x^k$, we see that $E_z \subset \bigcup_{i,j} B_{n_i}^u(x_i^j,r)$, and thus \eqref{car0} gives
\[
m_z^\C(E_z) \leq \sum_{i,j} e^{-n_i P(\ph)} e^{S_{n_i}\ph(x_i^j)}
\leq k \sum_i e^{-n_i P(\ph)} e^{S_{n_i}\ph(x_i) + Q_u};
\]
taking an infimum and then a limit gives $m_z^\C(E_z) \leq k e^{Q_u} m_y^\C(E_y)$.

In general, if $y,z$ lie close enough for holonomy maps to be defined, but not close enough for \eqref{eqn:piyz} to hold, then we can iterate $E_y,E_z$ forward until some time $n$ at which $f^ny,f^nz$ are close enough for the previous part to work, and use Theorem \ref{thm:Gibbs} to get (assuming without loss of generality that $E_y \subset B_n^u(y,\tau)$, and similarly for $E_z$)
\begin{multline*}
m_z^\C(E_z) = \int_{f^n(E_z)} e^{-nP(\ph) + S_n\ph(f^{-n}x)}\,dm_{f^n z}^\C(x) \\
\leq k e^{Q_u} \int_{f^n(E_y)} e^{-nP(\ph) + S_n\ph(f^{-n}x') + Q_s}\,dm_{f^n y}^\C(x')
= k e^{Q_u + Q_s} m_y^\C(E_y),
\end{multline*}
where the inequality uses the result from the previous paragraph.  Since the roles of $y,z$ were symmetric, this proves Theorem \ref{thm:holonomy} with $Q_2 = ke^{Q_u + Q_s}$.  See \cite[\S7.3]{CPZ} for a more detailed version of this argument.

\subsection{Geometric construction of equilibrium states}\label{sec:geometric}

Now that we have established the basic properties of the reference measures $m_x^\C$ associated to a H\"older continuous potential function $\ph$, the steps in the geometric construction of the unique equilibrium state $\mu_\ph$ are as follows.

\begin{enumerate}
\item Prove that every weak*-limit point $\mu$ of the sequence of probability measures $\mu_n = \frac 1n \sum_{k=0}^{n-1} f_*^k m_x^\C / m_x^\C(\Vl^u(x))$ 
is an invariant measure whose conditional measures satisfy part \ref{3} of Theorem \ref{thm:main}; in particular, they are equivalent to the reference measures $m_y^\C$.
\item Use this to deduce that any such $\mu$ satisfies part \ref{2} of Theorem \ref{thm:main}, namely:
\begin{enumerate}[label=\upshape{(\alph{*})}]
\item\label{2a} the conditional measures of $\mu$ are absolutely continuous with respect to stable holonomies, and therefore $\mu$ is ergodic by the Hopf argument (Proposition \ref{prop:hopf}); 
\item\label{2b} $\mu$ gives positive weight to every open set in $\L$; 
\item\label{2c} the $u$-Gibbs property of the reference measures implies the Gibbs property \eqref{gibbs2} for $\mu$; and
\item\label{2d} $\mu$ is the unique equilibrium state for $\ph$ by Proposition \ref{prop:unique}.
\end{enumerate}
\item Observe that each $\mu_n$ is a Borel probability measure on $\L$, 
and thus every subsequence has a subsubsequence that converges in the weak*-topology to a Borel probability measure $\mu$, which must be the unique equilibrium state $\mu_\ph$ by the previous step.
Since every subsequence of $\mu_n$ has a subsubsequence converging to $\mu_\ph$, it follows that the sequence itself converges to this limit, which establishes part \ref{1} of Theorem \ref{thm:main}.
\end{enumerate}

The first step takes most of the work; once it is done, parts \ref{2a}--\ref{2c} of the second step only require short arguments that leverage the properties already established, part \ref{2d} of the second step merely consists of observing that $\mu$ satisfies the hypotheses of Proposition \ref{prop:unique}, and the third step is completely contained in the paragraph above.  Thus we outline here the argument for the first step and parts \ref{2a}--\ref{2c} of the second step, referring once more to \cite{CPZ} for complete details.

\subsubsection{Conditional measures of limiting measures}

In order to understand the conditional measures of $\mu = \lim_{k\to\infty} \mu_{n_k}$, we start by studying the conditional measures of $\mu_n$.
Given $x\in \L$ and $n\in \NN$, the iterate $f_*^n m_x^\C$ is supported on $W_n = f^n(\Vl^u(x) \cap \L)$, and given any $y\in W_n$, we can iterate the formula from Theorem \ref{thm:Gibbs} and obtain
\begin{equation}\label{eqn:RN}
\frac{d((f_*^n m_x^\C)|_{\Vl^u(y)})}{dm_y^\C}(z) = e^{-nP(\ph) + S_n\ph(f^{-n} z)} =: g_n(z)
\end{equation}
for every $z\in W_n \cap \Vl^u(y)$.
One can show that $g_n \to 0$ as $n\to\infty$, so it is convenient to write $\rho_n^y(z) := g_n(z) / g_n(y)$, and Lemma \ref{lem:bowen} gives
\begin{equation}\label{eqn:eQu}
\rho_n^y(z) = e^{S_n\ph(f^{-n} z) - S_n\ph(f^{-n} y)} \in [e^{-Q_u},e^{Q_u}].
\end{equation}
These functions describe the conditional measures of $f_*^n m_x^\C$.  Indeed, given a rectangle $R$, choose $y_1,\dots, y_a \in f^n(\Vl^u(x)) \cap R$ such that $f^n(\Vl^u(x)) \cap R \subset \bigcup_{i=1}^a \Vl^u(y_i)$, as in Figure \ref{fig:fnmxC}.  Then for every Borel set $E\subset R$, we have\footnote{There is a small technical issue here, namely that there may be some $y_i$ at which $W_n$ does not cross $R$ completely, and so the integral in \eqref{eqn:convex} actually gives too large a value.  However, this can only occur if $z_i = f^{-n}(y_i)$ is very close to the boundary of $\Vl^u(x)$, and the contribution of such points is negligible in the limit; see \cite{CPZ}.}
\begin{equation}\label{eqn:convex}
f^n_*m_x^\C(E)=\sum_{i=1}^{a}\int_E\,g_n(z)\,d m_{y_i}^\C(z)  
=\sum_{i=1}^{a} g_n(y_i)\int_E\rho_n^{y_i}(z)\,d m_{y_i}^\C(z).
\end{equation}
In other words, one can write $f_*^n m_x^\C|_R$ as a linear combination of the measures  $\rho_n^{y_i}\,dm_{y_i}^\C$ associated to the \emph{standard pairs}\footnote{Standard pairs consisting of a local leaf $\Vl^u(y)$ and a density function $\rho$ were introduced by Chernov and Dolgopyat in \cite{CherDolg} to study stochastic properties of dynamical systems; they are also used in constructing SRB measures for some dynamical systems with weak hyperbolicity, see \cite{CDP}.
} $(\Vl^u(y_i), \rho_n^{y_i})$, with coefficients given by $g_n(y_i)$. 
This immediately implies that the conditional measures of $\mu_n$ on local unstable leaves are absolutely continuous with respect to the reference measures $m_x^\C$, with densities bounded away from $0$ and $\infty$.

\begin{figure}[htbp]
\includegraphics[width=.9\textwidth]{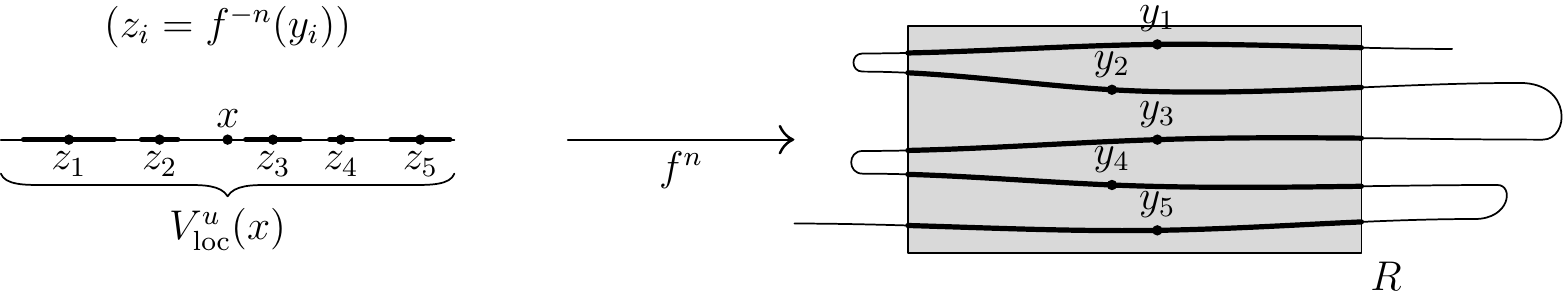}
\caption{Studying $f_*^n m_x^\C$ on a rectangle $R$.}
\label{fig:fnmxC}
\end{figure}

 
To go further, we need the following characterization of the conditional measures, which is an immediate consequence of \cite[Corollary 5.21]{EW11}.\footnote{See
\cite{PS82} for the analogous argument controlling the conditionals of $\mu$ when $\ph$ is the geometric potential and the reference measure is leaf volume.  In that setting, the role of Proposition \ref{prop:conditionals} here is played by \cite[Lemma 13]{PS82}.}

\begin{proposition}\label{prop:conditionals}
Let $\mu$ be a finite Borel measure on $\L$ and let $R\subset \L$ be a rectangle with $\mu(R)>0$.  Let $\{\xi_\ell\}_{\ell\in \NN}$ be a refining sequence of finite partitions of $R$ that converge to the partition $\xi$ into local unstable sets $V_R^u(y) = \Vl^u(y)\cap R$.
Then there is a set $R' \subset R$ with $\mu(R')=\mu(R)$ such that for every $y\in R'$ and every continuous $\psi\colon R\to \RR$, we have
\begin{equation}\label{eqn:conditional}
\int_{V_R^u(y)} \psi(z) \,d 
\mu_y^u(z)
= \lim_{\ell\to\infty} \frac 1{\mu(\xi_\ell(y))} \int_{\xi_\ell(y)} \psi(z)\,d\mu(z),
\end{equation}
where $\xi_n(y)$ denotes the element of the partition $\xi_\ell$ that contains $y$.
\end{proposition}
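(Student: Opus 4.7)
The plan is to recognize the right-hand side of \eqref{eqn:conditional} as a sequence of conditional expectations with respect to a refining filtration of $\sigma$-algebras, and to identify its almost-sure limit via the martingale convergence theorem together with the defining property of the disintegration $\{\mu_y^u\}$.

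First, fix a continuous $\psi\colon R\to\RR$. Let $\mathcal{B}_\ell$ denote the finite $\sigma$-algebra generated by the partition $\xi_\ell$, and let $\mathcal{B}_\xi$ denote the $\sigma$-algebra (completed with respect to $\mu|_R$) corresponding to the measurable partition $\xi$ into local unstable sets $V_R^u(y)$. The function
\[
\psi_\ell(y) := \frac{1}{\mu(\xi_\ell(y))}\int_{\xi_\ell(y)}\psi(z)\,d\mu(z)
\]
is constant on each atom of $\xi_\ell$ and is exactly the conditional expectation $E(\psi\,|\,\mathcal{B}_\ell)(y)$. Since $\{\xi_\ell\}$ is refining, $\{\mathcal{B}_\ell\}$ is an increasing filtration, and the hypothesis that $\xi_\ell\to\xi$ gives $\bigvee_\ell\mathcal{B}_\ell=\mathcal{B}_\xi$ modulo $\mu$-null sets. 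Doob's martingale convergence theorem then yields $\psi_\ell(y)\to E(\psi\,|\,\mathcal{B}_\xi)(y)$ for $\mu$-a.e.\ $y\in R$.

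Next I would identify the limit. By the defining property of the disintegration of $\mu|_R$ with respect to the measurable partition $\xi$, the map $y\mapsto\int_{V_R^u(y)}\psi\,d\mu_y^u$ is $\mathcal{B}_\xi$-measurable and satisfies
\[
\int_A\psi\,d\mu=\int_A\bigg(\int_{V_R^u(y)}\psi(z)\,d\mu_y^u(z)\bigg)\,d\mu(y)
\]
for every $A\in\mathcal{B}_\xi$; this is precisely the characterization of $E(\psi\,|\,\mathcal{B}_\xi)$. Combining this with the previous paragraph produces, for each fixed $\psi$, a full-measure set $R_\psi\subset R$ on which \eqref{eqn:conditional} holds.

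The main obstacle is that \eqref{eqn:conditional} must hold on a single full-measure set $R'$ for \emph{every} continuous $\psi$, while the argument so far only produces such a set for each fixed $\psi$. I would resolve this by a standard separability argument: choose a countable dense subset $\{\psi_k\}_{k\in\NN}\subset C(R)$ (possible since $R$ is compact metric) and set $R':=\bigcap_k R_{\psi_k}$, which still has $\mu(R')=\mu(R)$. For an arbitrary continuous $\psi$ and any $\eps>0$, pick $k$ with $\|\psi-\psi_k\|_\infty<\eps$; then both $\psi_\ell(y)$ and $\int\psi\,d\mu_y^u$ differ from their $\psi_k$-analogues by at most $\eps$, so the convergence for $\psi_k$ on $R'$ transfers to convergence for $\psi$ on $R'$ up to error $2\eps$, and letting $\eps\to 0$ completes the proof.
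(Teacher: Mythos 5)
Your argument is correct and is essentially the proof of the result the paper cites here ([EW11, Corollary 5.21]): interpret the averages as conditional expectations over the increasing filtration generated by the $\xi_\ell$, apply the increasing martingale convergence theorem, identify the limit with $\int_{V_R^u(y)}\psi\,d\mu_y^u$ via the defining property of the (normalized) disintegration, and upgrade from a fixed $\psi$ to all of $C(R)$ by separability. No gaps; the only points worth making explicit are that atoms $\xi_\ell(y)$ of zero measure form a $\mu$-null set to be discarded from $R'$, and that the hypothesis ``$\xi_\ell$ converges to $\xi$'' is exactly what gives $\bigvee_\ell\mathcal{B}_\ell=\mathcal{B}_\xi$ modulo null sets.
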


\begin{figure}[htbp]
\includegraphics[width=.8\textwidth]{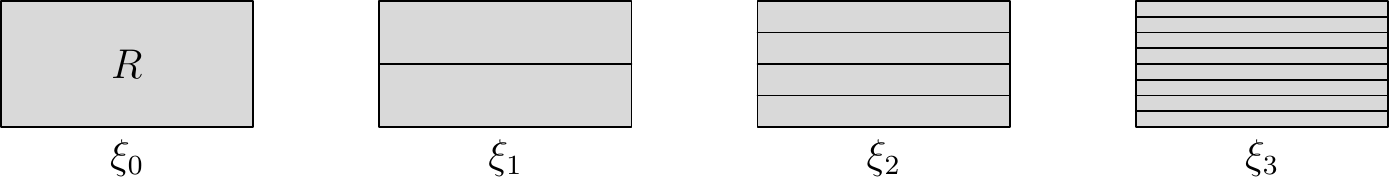}
\caption{A refining sequence of partitions of $R$.}
\label{fig:refining}
\end{figure}

Now the proof of Part \ref{3} of Theorem \ref{thm:main} goes as follows.  
Given a rectangle $R\subset \L$ with $\mu(R)>0$, let $\xi_\ell$ be a refining sequence of finite partitions of $R$ such that for every $y\in R$ and $\ell\in \NN$, the set $\xi_\ell(y)$ is a rectangle, and $\bigcap_{\ell\in \NN} \xi_\ell(y) = V_R^u(y)$, as in Figure \ref{fig:refining}.
Let $R'\subset R$ be the set given by Proposition \ref{prop:conditionals}.  We prove that \eqref{eqn:mu-m} holds for each $y\in R'$ by showing that for every positive continuous function $\psi\colon R\to \RR$, we have
\begin{equation}\label{eqn:mu-m-psi}
\int_{V_R^u(y)} \psi\,d\mu_y^u = \frac{Q_3^{\pm 1}}{m_y^\C(V_R^u(y))} \int_{V_R^u(y)} \psi\,d m_y^\C,
\end{equation}
where $Q_3$ is a constant that is independent of $\psi$.
To this end, we need to compare $\mu(\xi_\ell(y))$ and $\int_{\xi_\ell(y)} \psi(z)\,d\mu(z)$ and then apply \eqref{eqn:conditional}.
We see from \eqref{eqn:eQu} and \eqref{eqn:convex} that 
for each $j\in \NN$ there is a finite set $Y_j \subset R$ such that
\begin{equation}\label{eqn:int-xil}
\int_{\xi_\ell(y)} \psi \,d(f_*^n m_x^\C) = 
\sum_{p\in Y_j} g_j(p) \int_{V_R^u(p)} \psi(z) e^{\pm Q_u} \,dm_p^\C(z).
\end{equation}
Given $p\in \xi_\ell(y)$, Theorem \ref{thm:holonomy} gives
\[
\int_{V_R^u(p)} \psi \,dm_p^\C = Q_2^{\pm 1} \int_{V_R^u(y)} \psi(\pi_{py} z') \,dm_y^\C(z') = (2Q_2)^{\pm 1} \int_{V_R^u(y)} \psi \,dm_y^\C
\]
whenever $p,y$ are sufficiently close that $\psi(\pi_{py} z') = 2^{\pm 1} \psi(z')$ for all $z'\in V_R^u(y)$.  Thus for all sufficiently large $\ell$, \eqref{eqn:int-xil} gives
\[
\int_{\xi_\ell(y)} \psi\,d(f_*^n m_x^\C) = e^{\pm Q_u} (2Q_2)^{\pm 1} \bigg( \sum_{p\in Y_j} g_j(p) \bigg) \int_{V_R^u(y)} \psi\,dm_y^\C.
\]
Averaging over $0\leq j < n_k$ and sending $k\to\infty$ gives
\begin{equation}\label{eqn:int-1}
\int_{\xi_\ell(y)} \psi\,d\mu = (2Q_2e^{Q_u})^{\pm 1} \Big(\ulim_{k\to\infty} \frac 1{n_k} \sum_{j=0}^{n_k-1} \sum_{p\in Y_j} g_k(p)\Big) \int_{V_R^u(y)} \psi\,dm_y^\C.
\end{equation}
When $\psi\equiv 1$ this gives
\begin{equation}\label{eqn:int-2}
\mu(\xi_\ell(y)) = (2Q_2e^{Q_u})^{\pm 1} \Big(\ulim_{k\to\infty} \frac 1{n_k} \sum_{j=0}^{n_k-1} \sum_{p\in Y_j} g_k(p)\Big) m_y^\C(V_R^u(y)).
\end{equation}
Dividing \eqref{eqn:int-1} by \eqref{eqn:int-2}, sending $\ell\to\infty$, and using \eqref{eqn:conditional} yields 
\[
\int_{V_R^u(y)} \psi\,d\mu_y^u = (2Q_2e^{Q_u})^{\pm 2} \frac 1{m_y^\C(V_R^u(y))} \int_{V_R^u(y)} \psi\,dm_y^\C.
\]
Since $\psi>0$ was arbitrary, this proves \eqref{eqn:mu-m}, modulo some minor technical issues around the boundary of $\Vl^u(x)$ that are dealt with in \cite{CPZ}.

\subsubsection{Other properties of limiting measures}

Throughout this section, $\mu$ will denote an arbitrary $f$-invariant Borel probability measure on $\L$ that satisfies \eqref{eqn:mu-m}, so that the conditional measures $\mu_y^u$ are equivalent to the reference measures $m_y^\C$.  By the previous section, this includes every limit point of the sequence $\mu_n$.

We first observe that by Theorem \ref{thm:holonomy} and \eqref{eqn:mu-m}, for every rectangle $R$ with $\mu(R)>0$, $\mu$-a.e.\ $y,z\in R$, and every $A\subset V_R^u(z)$, we have
\begin{equation}\label{eqn:mu-cond-ac}
\mu_y^u(\pi_{zy} A) = Q_3^{\pm 1} m_y^\C(\pi_{zy} A) / m_y^\C(R)
= Q_3^{\pm 1} Q_2^{\pm 2} m_z^\C(A) / m_z^\C(R) = (Q_3 Q_2)^{\pm 2} \mu_z^u(A).
\end{equation}
In particular, holonomy maps along stable manifolds are absolutely continuous with respect to the conditional measures $\mu_y^u$, and thus by the standard Hopf argument (Proposition \ref{prop:hopf}), $\mu$ is ergodic.

Now we prove that $\mu$ is fully supported and satisfies the Gibbs property.  Let $\delta>0$ be small enough that for every $x\in \L$, the rectangle
\[
R(x,\delta) := [\overline{B_\L^u(x,\delta)},\overline{B_\L^s(x,\delta)}] = \{[y,z] : y\in \overline{B_\L^u(x,\delta)}, z\in \overline{B_\L^s(x,\delta)}\}
\]
is well-defined, as in \eqref{rectangle}.  Given $n\in \NN$, consider the rectangle
\[
R_n(x,\delta) := [\overline{B_n^u(x,\delta) \cap \L}, \overline{B_\L^s(x,\delta)}] \subset R(x,\delta).
\]
It is shown in \cite[Lemma 8.3]{CPZ} that for every $\delta>0$, there are $\delta_1,\delta_2>0$ such that
\begin{equation}\label{eqn:BnRn}
R_n(x,\delta_1) \subset B_n(x,\delta) \subset R_n(x,\delta_2)
\end{equation}
for every $x\in \L$ and $n\in \NN$; thus to prove the Gibbs property \eqref{gibbs1} it suffices to establish the corresponding bounds on $\mu(R_n(x,\delta))$.

\begin{lemma}\label{lem:pre-Gibbs}
Given $\delta>0$, there is $Q_5>0$ such that for every $x,\delta,n$ as above, we have
\begin{equation}\label{eqn:pre-Gibbs}
\mu(R_n(x,\delta)) = Q_5^{\pm 1} e^{-nP(\ph) + S_n\ph(x)} \mu(R(x,\delta)).
\end{equation}
\end{lemma}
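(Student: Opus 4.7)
The plan is to disintegrate $\mu$ over the partition of $R(x,\delta)$ by local unstable leaves and reduce the estimate to the $u$-Gibbs property of the reference measures (Corollary \ref{cor:Gibbs}), using the equivalence of the unstable conditionals to $m_y^\C$ (Theorem \ref{thm:main}\ref{3}) and the absolute continuity of stable holonomies (Theorem \ref{thm:holonomy}). Set $A := \overline{B_\L^u(x,\delta)}$, $A_n := \overline{B_n^u(x,\delta)\cap\L}$, and $B := \overline{B_\L^s(x,\delta)}$, so that $R(x,\delta) = [A,B]$ and $R_n(x,\delta) = [A_n,B]$. Identifying $V_R^s(x)$ with $B$, the local product structure gives $V_R^u(y) = \pi_{xy}(A)$ and $V_R^u(y)\cap R_n(x,\delta) = \pi_{xy}(A_n)$ for every $y\in B$.

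Applying \eqref{gibbs-split2} with reference point $p=x$, and using that each $\mu_y^u$ is a probability measure, one has
\[
\mu(R_n(x,\delta)) = \int_{V_R^s(x)} \mu_y^u(V_R^u(y)\cap R_n(x,\delta))\,d\tilde\mu(y), \qquad \mu(R(x,\delta)) = \tilde\mu(V_R^s(x)).
\]
The first main step is to show the integrand is essentially independent of $y$. By Theorem \ref{thm:main}\ref{3}, $\mu_y^u$ equals $m_y^\C/m_y^\C(V_R^u(y))$ up to a factor in $[Q_3^{-1},Q_3]$; by Theorem \ref{thm:holonomy}, $m_y^\C(\pi_{xy}(E)) = Q_2^{\pm 1}\,m_x^\C(E)$ for every Borel set $E\subset V_R^u(x)$. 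Applied to $E\in\{A_n, A\}$ and combined, these yield
\[
\mu_y^u(V_R^u(y)\cap R_n(x,\delta)) = (Q_3Q_2^2)^{\pm 1}\,\frac{m_x^\C(A_n)}{m_x^\C(A)},
\]
whose right-hand side is independent of $y$. Integrating against $d\tilde\mu(y)$ gives $\mu(R_n(x,\delta)) = (Q_3Q_2^2)^{\pm 1}(m_x^\C(A_n)/m_x^\C(A))\,\mu(R(x,\delta))$.

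Finally, Corollary \ref{cor:Gibbs} supplies $m_x^\C(B_n^u(x,\delta)) = Q_1^{\pm 1}e^{-nP(\ph)+S_n\ph(x)}$, and specialising to $n=0$ gives $m_x^\C(B_0^u(x,\delta)) = Q_1^{\pm 1}$; substituting into the previous display yields \eqref{eqn:pre-Gibbs} with $Q_5 = Q_1^2 Q_2^2 Q_3$. The argument is fairly direct given the tools already assembled, and the only genuine technical nuisance is that $A$ and $A_n$ are the closures of the open $u$-Bowen balls to which the $u$-Gibbs property directly applies; this is handled by sandwiching $A_n$ between $B_n^u(x,\delta)\cap\L$ and $B_n^u(x,\delta')\cap\L$ for some $\delta'>\delta$ (permissible because $\overline{B_n^u(x,\delta)}\subset B_n^u(x,\delta')$ in $\Vl^u(x)$), with the resulting $\delta$-dependence absorbed into $Q_5$. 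The case $\mu(R(x,\delta)) = 0$ is trivial since then $R_n(x,\delta)\subset R(x,\delta)$ also has zero $\mu$-measure.
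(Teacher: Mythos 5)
Your proposal is correct and follows essentially the same route as the paper: disintegrate $\mu$ over unstable leaves of $R(x,\delta)$, use \eqref{eqn:mu-m} to replace the conditionals $\mu_y^u$ by the reference measures $m_y^\C$, transport everything to the leaf of $x$ via Theorem \ref{thm:holonomy}, and conclude with the $u$-Gibbs property (Corollary \ref{cor:Gibbs}), handling the closures by the sandwich $B_n^u(x,\delta)\subset\overline{B_n^u(x,\delta)}\subset B_n^u(x,2\delta)$. The only cosmetic difference is that you normalize the denominator $m_x^\C(A)$ by the $u$-Gibbs property at $n=0$, whereas the paper invokes Theorem \ref{thm:finite}; both give a constant absorbed into $Q_5$.
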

\begin{proof}
Writing $\mu_y^u$ for the conditional measures of $\mu$ on unstable leaves in $R(z,\delta)$, we have
\begin{align*}
\mu(R_n&(x,\delta)) = \int_{R(x,\delta)} \mu_y^u(R_n(x,\delta)) \,d\mu(y)
= Q_3^{\pm 1} \int_{R(x,\delta)} \frac{m_y^\C(R_n(x,\delta))}{m_y^\C(R(x,\delta))} \,d\mu(y) \\
&= (KQ_3)^{\pm 1} \int_{R(x,\delta)} m_y^\C (\pi_{xy} \overline{B_n^u(x,\delta)}) \,d\mu(y)
= (KQ_3 Q_2)^{\pm 1} m_x^\C(\overline{B_n^u(x,\delta)}) \mu(R(x,\delta)),
\end{align*}
where the first equality uses the definition of conditional measures, the second uses \eqref{eqn:mu-m}, the third uses Theorem \ref{thm:finite}, and the fourth uses Theorem \ref{thm:holonomy}.
Since $B_n^u(x,\delta) \subset \overline{B_n^u(x,\delta)} \subset B_n^u(x,2\delta)$, the result follows from the $u$-Gibbs property of $m_x^\C$.
\end{proof}

\begin{lemma}[{\cite[Lemma 8.4]{CPZ}}]\label{lem:Rxz}
For every sufficiently small $\delta>0$, there is $\delta'>0$ such that for every  $z\in \L$ and $x\in R(z,\delta')$, we have $R(z,\delta') \subset R(x,\delta)$.
\end{lemma}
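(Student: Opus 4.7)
The plan is to use continuity of the Smale bracket together with the fact that any rectangle can be recovered from its unstable and stable slices through any one of its points.

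To begin, observe that since $x \in R(z, \delta')$ and $R(z, \delta')$ is a rectangle, \eqref{rectangle} yields
\begin{equation*}
R(z, \delta') = [V^u_{R(z,\delta')}(x), V^s_{R(z,\delta')}(x)].
\end{equation*}
Because the bracket is monotone in each slot, it suffices to prove the two inclusions $V^u_{R(z,\delta')}(x) \subset \overline{B_\L^u(x, \delta)}$ and $V^s_{R(z,\delta')}(x) \subset \overline{B_\L^s(x, \delta)}$. Unwinding the definitions, this reduces to the distance bound: every $w \in R(z, \delta') \cap \Vl^u(x)$ satisfies $d(x, w) \leq \delta$, and analogously in the stable case.

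The key input is a quantitative continuity estimate for the Smale bracket. Compactness of $\L$, uniform transversality of the splitting $E^u \oplus E^s$ (which follows from \eqref{holder} and continuity of the distributions), and the uniform $C^{1+\alpha}$ regularity of the local manifolds (item \ref{unif-holder} of Proposition \ref{prop:local-mfds}) together yield constants $\eps_0, C > 0$ such that for all $a, b \in \L$ with $d(a, b) \leq \eps_0$, the point $[a, b]$ is defined and
\begin{equation*}
d(a, [a, b]) \leq C\, d(a, b), \qquad d(b, [a, b]) \leq C\, d(a, b).
\end{equation*}
Choose $\delta_0 \in (0, \eps_0/4)$ small enough that $R(y, \delta_0)$ is well-defined for every $y \in \L$; then any $w \in R(y, \delta_0)$ has the form $w = [a, b]$ with $a \in \overline{B_\L^u(y, \delta_0)}$, $b \in \overline{B_\L^s(y, \delta_0)}$, and consequently $d(y, w) \leq (1 + C)\delta_0$ by the triangle inequality.

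Applied to the points $x, w \in R(z, \delta')$ with $\delta' \leq \delta_0$, this gives $d(z, x), d(z, w) \leq (1 + C)\delta'$, hence $d(x, w) \leq 2(1 + C)\delta'$. Choosing
\begin{equation*}
\delta' = \min\left\{\delta_0,\ \tfrac{\delta}{2(1 + C)}\right\}
\end{equation*}
ensures $d(x, w) \leq \delta$ for every $w \in R(z, \delta')$. Restricting $w$ to lie in $V^u_{R(z,\delta')}(x)$ or $V^s_{R(z,\delta')}(x)$ respectively yields the two inclusions and completes the proof. The main obstacle is establishing uniformity of the Lipschitz constant $C$ across $\L$; this rests on compactness together with the uniform geometric data provided by Proposition \ref{prop:local-mfds}, and is essentially a standard consequence of the implicit function theorem applied to the transverse intersection $\Vl^s(a) \cap \Vl^u(b)$.
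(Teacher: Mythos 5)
Your proof is correct; the paper itself omits the argument, deferring to \cite[Lemma 8.4]{CPZ}, and your route --- recovering $R(z,\delta')$ as the bracket of its unstable and stable slices through $x$ via \eqref{rectangle}, then bounding $\diam R(z,\delta')$ by a uniform multiple of $\delta'$ using the uniform Lipschitz (or even just uniform) continuity of the Smale bracket --- is the standard one. Only cosmetic adjustments are needed: the diameter constant should be $(1+2C)$ rather than $(1+C)$ since $d(a,b)\le 2\delta_0$ for $a\in\overline{B_\L^u(y,\delta_0)}$ and $b\in\overline{B_\L^s(y,\delta_0)}$, and you should take $\delta'$ \emph{strictly} smaller than $\delta/(2(1+2C))$ so that $d(x,w)<\delta$, which puts $w$ in $B_\L^u(x,\delta)$ itself rather than merely in the closed ball (which a priori need not coincide with $\overline{B_\L^u(x,\delta)}$).
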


\begin{lemma}\label{lem:dense-orbit}
If $y\in \L$ has a backwards orbit that is dense in $\L$, then $\mu(R(y,\delta)) > 0$ for all $\delta>0$.
\end{lemma}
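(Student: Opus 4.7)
The plan is to transfer positive $\mu$-mass from a rectangle in the support of $\mu$ to $R(y,\delta)$ using the backward orbit of $y$, the Gibbs-like bound of Lemma~\ref{lem:pre-Gibbs}, and $f$-invariance. The key geometric observation is that the refined rectangle $R_n(x,\delta)$ is tailored precisely so that $f^n$ maps it \emph{into} $R(f^n x,\delta)$: the $u$-Bowen ball $B_n^u(x,\delta)=f^{-n}B^u(y,\delta)$ expands exactly onto $B^u(y,\delta)$ under $f^n$, while the local stable ball $B_\L^s(x,\delta)$ contracts.

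First, given $\delta>0$, I would let $\delta'>0$ be the constant supplied by Lemma~\ref{lem:Rxz}. Each $R(z,\delta')$ is a neighborhood of $z$ in $\L$ by the local product structure, so compactness of $\L$ yields a finite subcover, and because $\mu(\L)=1$, at least one of these rectangles carries positive mass; pick $z_0\in\L$ with $\mu(R(z_0,\delta'))>0$. By density of the backward orbit of $y$ in $\L$, choose $n\geq 0$ with $x:=f^{-n}(y)\in R(z_0,\delta')$. Lemma~\ref{lem:Rxz} then gives the inclusion $R(z_0,\delta')\subset R(x,\delta)$, so $\mu(R(x,\delta))\geq\mu(R(z_0,\delta'))>0$, and Lemma~\ref{lem:pre-Gibbs} upgrades this to $\mu(R_n(x,\delta))\geq Q_5^{-1}e^{-nP(\ph)+S_n\ph(x)}\mu(R(x,\delta))>0$.

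To finish, I would verify $f^n(R_n(x,\delta))\subset R(y,\delta)$. Any $u=[p,q]\in R_n(x,\delta)$ has $p\in\overline{B_n^u(x,\delta)\cap\L}$ and $q\in\overline{B_\L^s(x,\delta)}$. Since $B_n^u(x,\delta)=f^{-n}B^u(y,\delta)$ and $f^n$ restricts to a homeomorphism of $\L$, we obtain $f^n(p)\in\overline{B_\L^u(y,\delta)}$; Proposition~\ref{prop:local-mfds}\eqref{leaves-contract} gives $d(f^n q,y)\leq\lambda^n\delta\leq\delta$, so $f^n(q)\in\overline{B_\L^s(y,\delta)}$. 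Forward invariance of the local foliations then yields $f^n(u)=[f^n(p),f^n(q)]\in R(y,\delta)$, and since $\mu$ is $f$-invariant and $f^n$ is a homeomorphism of $\L$, we conclude $\mu(R(y,\delta))\geq\mu(f^n(R_n(x,\delta)))=\mu(R_n(x,\delta))>0$.

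The main obstacle is the bookkeeping in this last inclusion: one must confirm that $[f^n(p),f^n(q)]$ is actually defined and coincides with $f^n([p,q])$. Both points lie near $y$ by the estimates above, so the local product structure at $y$ (valid whenever $\delta$ is small enough for $R(y,\delta)$ to be a bona fide rectangle) ensures the Smale bracket is defined there and agrees with the image of $[p,q]$ under $f^n$.
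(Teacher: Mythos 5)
Your argument is correct and follows the paper's proof essentially verbatim: finite cover by rectangles $R(z,\delta')$, density of the backward orbit to land $x=f^{-n}(y)$ in a positive-mass rectangle, Lemma \ref{lem:Rxz} to pass to $R(x,\delta)$, Lemma \ref{lem:pre-Gibbs} to get $\mu(R_n(x,\delta))>0$, and the inclusion $f^nR_n(x,\delta)\subset R(y,\delta)$ plus invariance. Your extra care with the bracket identity $f^n([p,q])=[f^n(p),f^n(q)]$ and the stable contraction estimate only makes explicit what the paper asserts in one line.
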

\begin{proof}
Let $\delta'>0$ be as in Lemma \ref{lem:Rxz}.  Since $\L$ is compact, there is a finite set $E\subset \L$ such that $\bigcup_{z\in E} R(z,\delta') = \L$, and thus there is $z\in E$ with $\mu(R(z,\delta'))>0$.  Since the backwards orbit of $y$ is dense, there is $n\geq 0$ such that $x := f^{-n}(y) \in R(z,\delta')$.  By Lemma \ref{lem:Rxz} and our choice of $x$, we have
\[
\mu(R(x,\delta)) \geq \mu(R(z,\delta')) > 0.
\]
By Lemma \ref{lem:pre-Gibbs}, we conclude that $\mu(R_n(x,\delta))>0$.  Moreover, we have
\[
f^n R_n(x,\delta) = f^n [B_n^u(x,\delta) \cap \L, B_\L^s(x,\delta)]
\subset [B_\L^u(y,\delta), B_\L^s(y,\delta)] = R(y,\delta),
\]
where we use the fact that $\|Df|_{E^s}\| \leq 1$.  Since $\mu$ is $f$-invariant, this gives $\mu(R(y,\delta)) \geq \mu(R_n(x,\delta))>0$.
\end{proof}

Since $f$ is topologically transitive on $\L$, every (relatively) open set in $\L$ contains a set of the form $R(y,\delta)$ where $y$ has a dense backwards orbit.  Thus Lemma \ref{lem:dense-orbit} implies that $\mu$ is fully supported on $\L$.

Finally, we deduce the Gibbs property \eqref{gibbs2} for $\mu$ as follows.  Given $\delta>0$, let $\delta'>0$ be as in Lemma \ref{lem:Rxz}, and once again let $E\subset \L$ be a finite set with $\bigcup_{z\in E} R(z,\delta') = \L$.  Since $\mu$ is fully supported, we have $\eta := \min_{z\in E} \mu(R(z,\delta')) > 0$.  Now given  any $x\in \L$, we have $x\in R(z,\delta')$ for some $z\in E$, and thus Lemma \ref{lem:Rxz} gives
\[
\mu(R(x,\delta)) \geq \mu(R(z,\delta')) \geq \eta.
\]
In particular, $\eta \leq \mu(R(x,\delta)) \leq 1$ for every $x\in \L$, and then the Gibbs property \eqref{gibbs2} follows immediately from Lemma \ref{lem:pre-Gibbs} and \eqref{eqn:BnRn}.

\subsubsection{Local product structure}\label{sec:lps}

To prove Corollary \ref{cor:prod}, we first observe that \eqref{eqn:mu-cond-ac}
gives $\pi^*_{yp}\mu^u_p \sim \mu^u_y$ for $\mu$-a.e.\ $p,y\in R$, which is the first claim.  For the second claim, we define a function $h\colon R\to (0,\infty)$ by $h(z) = \frac{d\mu^u_z}{d(\pi^*_{zp}\mu^u_p)}(z)$, so that \eqref{gibbs-split2} gives
\begin{multline}\label{eqn:otimes}
\mu(E)=\int_{V_R^s(p)}\int_{V_R^u(y)} \one_{E}(z)\,d\mu_y^u(z)\,d \tilde\mu_p(y) \\
= \int_{V_R^s(p)} \int_{V_R^u(y)} \one_E(z) h(z) \,d(\pi_{zp}^* \mu_p^u)(z) \,d\tilde\mu_p(y)
= \int_E h(z) \,d(\mu_p^u\otimes \tilde\mu_p)(z).
\end{multline}
for every measurable $E\subset R$.  For the third claim, we observe that \eqref{eqn:otimes} gives
\[
\mu(E) = \int_E h(z) \,d(\mu_p^u \otimes \tilde\mu_p)(z)
= \int_{V_R^u(p)} \int_{V_R^s(y)} \one_E(z) h(z) \,d(\pi_{yp}^* \tilde\mu_p)(z) \,d\mu_p^u(y),
\]
and since $\mu_y^s$ is uniquely determined up to a scalar (for $\mu$-a.e.\ $y$) by the condition that
\[
\mu(E) = \int_{V_R^u(p)} \int_{V_R^s(y)} \one_E(z) \,d\mu_y^s(z) \,d\nu(y)
\]
for some measure $\nu$ on $V_R^u(p)$, we conclude that $d\mu_y^s = h\,d(\pi_{yp}^*\tilde\mu_p)$.

\subsection{Proof of Theorem \ref{push-cond1}}\label{sec:push-cond}
We have that for any Borel subset $E\subset Y$
\begin{equation}\label{gibbs-split1}
\mu(E)=\int_{\tilde{Y}} \int_{W} \one_{E}(z)\,d\mu_W^\xi(z)\,d\tilde{\mu}(W).   
\end{equation}
Without loss of generality we may assume that $\mu_W^\xi$ is normalized, so that $\mu_W^\xi(W)=1$.
Consider the set $B$ of all Birkhoff generic points $x\in X$, for which
\begin{equation}\label{generic-p}
\lim_{n\to\infty}\frac{1}{n}\sum_{k=0}^{n-1}h(f^k(x))=\int_Xh\,d\mu
\end{equation}
for every continuous function $h$ on $X$. Since $\mu$ is ergodic we have that $B$ has full measure in $Y$. By \eqref{gibbs-split1}, there is a set $D\subset \tilde Y$ such that $\tilde\mu(\tilde Y\setminus D)=0$ and for every $W\in D$ we have $\mu_W^\xi(W\setminus B)=0$.  Given any such $W$ and any measure $\nu\ll \mu_W^\xi$, we have $\nu(X\setminus B)=0$.  Then Theorem \ref{push-cond1} is a consequence of the following general result.

\begin{proposition}\label{prop:generic-pts}
Let $X$ be a compact metric space, $f\colon X\to X$ a continuous map, and $\mu$ an $f$-invariant Borel probability measure on $X$.  Let $B$ be the set of Birkhoff generic points for $\mu$ and let $\nu$ be any probability measure on $X$ such that $\nu(B)=1$.  Then the sequence of measures $\nu_n := \frac 1n \sum_{k=0}^{n-1} f_*^k \nu$ converges in the weak* topology to the measure $\mu$.
\end{proposition}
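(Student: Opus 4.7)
The plan is to verify weak-* convergence by testing against an arbitrary continuous function $h \in C(X)$ and reducing the claim to pointwise convergence of Birkhoff averages combined with dominated convergence. First I would use the defining property of pushforward to rewrite
\[
\int h \,d\nu_n = \frac{1}{n}\sum_{k=0}^{n-1} \int h \,d(f_*^k \nu) = \frac{1}{n}\sum_{k=0}^{n-1} \int (h\circ f^k)\,d\nu = \int h_n \,d\nu,
\]
where $h_n(x) := \frac{1}{n}\sum_{k=0}^{n-1} h(f^k x)$ denotes the $n$-th Birkhoff average. Thus it suffices to prove $\int h_n\,d\nu \to \int h\,d\mu$ for every $h\in C(X)$.

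By the very definition of $B$, for every $x\in B$ and every $h\in C(X)$ we have $h_n(x)\to\int h\,d\mu$ as $n\to\infty$. Since $\nu(B)=1$, this pointwise convergence holds $\nu$-almost everywhere. The uniform bound $|h_n(x)|\le\|h\|_\infty$ provides a constant dominating function, so Lebesgue's dominated convergence theorem yields
\[
\lim_{n\to\infty} \int h\,d\nu_n \;=\; \lim_{n\to\infty} \int h_n\,d\nu \;=\; \int \Big(\int h\,d\mu\Big)\,d\nu \;=\; \int h\,d\mu,
\]
which is exactly weak-* convergence $\nu_n \to \mu$.

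The only issue requiring a moment's thought is the measurability of $B$, so that the hypothesis $\nu(B)=1$ is meaningful. Since $X$ is compact metric, $C(X)$ is separable; fixing a countable dense subset $\{h_j\}\subset C(X)$, each set $B_j := \{x : h_{j,n}(x)\to\int h_j\,d\mu\}$ is Borel, and a standard $3\varepsilon$-approximation argument (using the uniform bound $\|h_n\|_\infty\le\|h\|_\infty$) shows that $B=\bigcap_j B_j$, which is Borel. This is the only mildly subtle point; everything else is a direct application of Fubini and dominated convergence, so I do not expect any substantial obstacle — the proposition is essentially a restatement of the ergodic theorem in the language of measure-valued averaging along pushforwards.
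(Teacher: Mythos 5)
Your proof is correct, and it takes a genuinely more streamlined route than the paper's. You reduce everything to the identity $\int h\,d\nu_n=\int h_n\,d\nu$ and then invoke the bounded convergence theorem, using that $h_n\to\int h\,d\mu$ pointwise on $B$ (a $\nu$-full set) with the uniform bound $\|h_n\|_\infty\le\|h\|_\infty$. The paper instead extracts an arbitrary weak* limit point $\kappa$ of the sequence $\nu_n$, proves the one-sided inequality $\int h\,d\kappa\le\int h\,d\mu$ for nonnegative continuous $h$ by an Egorov-type truncation (introducing the sets $B_N(\eps)$ on which the Birkhoff averages are within $\eps$ of $\int h\,d\mu$ for all $n\ge N$, and controlling the small complement by the sup norm of $h$), and then concludes $\kappa=\mu$ because both are probability measures; finally, since every limit point equals $\mu$ and the space of probability measures is weak* compact, the full sequence converges. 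In effect the paper reproves the bounded convergence theorem by hand in this special case, whereas you cite it; your argument also delivers convergence of the whole sequence directly, without the compactness/subsequence step. Your closing remark on the Borel measurability of $B$ (via a countable dense family in $C(X)$ and a $3\eps$-argument) is a worthwhile observation that the paper leaves implicit; the only trivial quibble is that your appeal to ``Fubini'' is really just the interchange of a finite sum with an integral.
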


Before proving Proposition \ref{prop:generic-pts}, we note that $\mu$ is not required to be ergodic.  In the case when $\mu$ is ergodic, Birkhoff's theorem gives $\mu(B)=1$, so that in particular $B$ is nonempty.  For non-ergodic $\mu$, the set $B$ can be either empty or nonempty.

\begin{proof}[Proof of Proposition \ref{prop:generic-pts}]
Let $\kappa$ be a weak* limit point of the sequence $\nu_n$, so that there is a subsequence $\{n_\ell\}_{\ell\in \NN}$ such that for every continuous function $h$ on $X$, we have
\begin{equation}\label{limit}
\int_Xh\,d\kappa=\lim_{\ell\to\infty}\int_Xh\,d\nu_{n_\ell}
=\lim_{\ell\to\infty}\int\frac{1}{n_\ell}\sum_{k=0}^{n_\ell-1}h\circ f^k d{\nu}. 
\end{equation}
We show that $\kappa \leq \mu$, which implies that $\kappa=\mu$ since both are probability measures.  It suffices to show that $\int h\,d\kappa \leq \int h\,d\mu$ for every nonnegative continuous function $h$.

Fix $h$ as above. Given $N\in\NN$ and $\eps>0$, let
\[
B_N(\eps) := \bigg\{x\in B : \Big| \frac 1n S_n h(x) - \int h\,d\mu \Big| < \eps \text{ for all } n\geq N\bigg\}.
\]
Then for every $\eps>0$ we have $\bigcup_{N\in \NN} B_N(\eps) = B$, hence there is $N_\eps$ such that $\nu(B\setminus B_N(\eps)) < \eps$.  By \eqref{limit} we can choose $n_\ell > N_\eps$ such that
\begin{align*}
\int_X h\,d\kappa &\leq \eps + \int \frac 1{n_\ell} S_{n_\ell} h\,d\nu 
= \eps + \int_{B_N(\eps)} \frac 1{n_\ell} S_{n_\ell} h\,d\nu
+ \int_{B\setminus B_N(\eps)} \frac 1{n_\ell} S_{n_\ell} h\,d\nu \\
&\leq 2\eps + \int h\,d\mu + \nu(B\setminus B_N(\eps)) \|h\|
< \eps(2+\|h\|) + \int h\,d\mu.
\end{align*}
Since $\eps>0$ was arbitrary this completes the proof of the proposition.
\end{proof}

\bibliographystyle{amsalpha}
\bibliography{car-ref}

\providecommand{\bysame}{\leavevmode\hbox to3em{\hrulefill}\thinspace}
\providecommand{\MR}{\relax\ifhmode\unskip\space\fi MR }
\providecommand{\MRhref}[2]{%
  \href{http://www.ams.org/mathscinet-getitem?mr=#1}{#2}
}
\providecommand{\href}[2]{#2}
\begin{thebibliography}{Bow75b}

\bibitem[BM77]{BM77}
Rufus Bowen and Brian Marcus, \emph{Unique ergodicity for horocycle
  foliations}, Israel J. Math. \textbf{26} (1977), no.~1, 43--67. \MR{0451307}

\bibitem[Bow70]{rB70}
Rufus Bowen, \emph{Markov partitions for {A}xiom {${\rm A}$} diffeomorphisms},
  Amer. J. Math. \textbf{92} (1970), 725--747. \MR{0277003}

\bibitem[Bow73]{rB73}
\bysame, \emph{Topological entropy for noncompact sets}, Trans. Amer. Math.
  Soc. \textbf{184} (1973), 125--136. \MR{0338317}

\bibitem[Bow75a]{Bow}
\bysame, \emph{Equilibrium states and the ergodic theory of {A}nosov
  diffeomorphisms}, Lecture Notes in Mathematics, Vol. 470, Springer-Verlag,
  Berlin-New York, 1975. \MR{0442989}

\bibitem[Bow75b]{rB75b}
\bysame, \emph{A horseshoe with positive measure}, Invent. Math. \textbf{29}
  (1975), no.~3, 203--204. \MR{0380890}

\bibitem[Bow75]{rB745}
\bysame, \emph{Some systems with unique equilibrium states}, Math. Systems
  Theory \textbf{8} (1974/75), no.~3, 193--202. \MR{0399413}

\bibitem[BP07]{pes-red}
Luis Barreira and Yakov Pesin, \emph{Nonuniform hyperbolicity}, Encyclopedia of
  Mathematics and its Applications, vol. 115, Cambridge University Press,
  Cambridge, 2007, Dynamics of systems with nonzero Lyapunov exponents.
  \MR{2348606}

\bibitem[BP13]{BP13}
\bysame, \emph{Introduction to smooth ergodic theory}, Graduate Studies in
  Mathematics, vol. 148, American Mathematical Society, Providence, RI, 2013.
  \MR{3076414}

\bibitem[CD09]{CherDolg}
N.~Chernov and D.~Dolgopyat, \emph{Brownian {B}rownian motion. {I}}, Mem. Amer.
  Math. Soc. \textbf{198} (2009), no.~927, viii+193. \MR{2499824}

\bibitem[CDP16]{CDP}
Vaughn Climenhaga, Dmitry Dolgopyat, and Yakov Pesin, \emph{Non-stationary
  non-uniform hyperbolicity: {SRB} measures for dissipative maps}, Comm. Math.
  Phys. \textbf{346} (2016), no.~2, 553--602. \MR{3535895}

\bibitem[CHT16]{CHT16}
Yves Coud\`ene, Boris Hasselblatt, and Serge Troubetzkoy, \emph{Multiple mixing
  from weak hyperbolicity by the {H}opf argument}, Stoch. Dyn. \textbf{16}
  (2016), no.~2, 1660003, 15. \MR{3470552}

\bibitem[Cli11]{vC11}
Vaughn Climenhaga, \emph{Bowen's equation in the non-uniform setting}, Ergodic
  Theory Dynam. Systems \textbf{31} (2011), no.~4, 1163--1182. \MR{2818690}

\bibitem[CP17]{CP17}
Vaughn Climenhaga and Yakov Pesin, \emph{Building thermodynamics for
  non-uniformly hyperbolic maps}, Arnold Math. J. \textbf{3} (2017), no.~1,
  37--82. \MR{3646530}

\bibitem[CPZ18]{CPZ}
V.~Climenhaga, Ya. Pesin, and A.~Zelerowicz, \emph{Equilibrium measures for
  some partially hyperbolic systems}, Preprint, arXiv:1810.08663v1, 2018.

\bibitem[DS17]{DS}
Tushar Das and David Simmons, \emph{The {H}ausdorff and dynamical dimensions of
  self-affine sponges: a dimension gap result}, Invent. Math. \textbf{210}
  (2017), no.~1, 85--134. \MR{3698340}

\bibitem[ER85]{ER85}
J.-P. Eckmann and D.~Ruelle, \emph{Ergodic theory of chaos and strange
  attractors}, Rev. Modern Phys. \textbf{57} (1985), no.~3, part 1, 617--656.
  \MR{800052}

\bibitem[EW11]{EW11}
Manfred Einsiedler and Thomas Ward, \emph{Ergodic theory with a view towards
  number theory}, Graduate Texts in Mathematics, vol. 259, Springer-Verlag
  London, Ltd., London, 2011. \MR{2723325}

\bibitem[Fed69]{hF69}
Herbert Federer, \emph{Geometric measure theory}, Die Grundlehren der
  mathematischen Wissenschaften, Band 153, Springer-Verlag New York Inc., New
  York, 1969. \MR{0257325}

\bibitem[Ham89]{uH89}
Ursula Hamenst\"adt, \emph{A new description of the {B}owen-{M}argulis
  measure}, Ergodic Theory Dynam. Systems \textbf{9} (1989), no.~3, 455--464.
  \MR{1016663}

\bibitem[Has89]{bH89}
Boris Hasselblatt, \emph{A new construction of the {M}argulis measure for
  {A}nosov flows}, Ergodic Theory Dynam. Systems \textbf{9} (1989), no.~3,
  465--468. \MR{1016664}

\bibitem[Hay94]{nH94}
Nicolai T.~A. Haydn, \emph{Canonical product structure of equilibrium states},
  Random Comput. Dynam. \textbf{2} (1994), no.~1, 79--96. \MR{1265227}

\bibitem[Hop39]{eH39}
Eberhard Hopf, \emph{Statistik der geod\"atischen {L}inien in
  {M}annigfaltigkeiten negativer {K}r\"ummung}, Ber. Verh. S\"achs. Akad. Wiss.
  Leipzig \textbf{91} (1939), 261--304. \MR{0001464}

\bibitem[Kai90]{vK90}
Vadim~A. Kaimanovich, \emph{Invariant measures of the geodesic flow and
  measures at infinity on negatively curved manifolds}, Ann. Inst. H.
  Poincar\'e Phys. Th\'eor. \textbf{53} (1990), no.~4, 361--393, Hyperbolic
  behaviour of dynamical systems (Paris, 1990). \MR{1096098}

\bibitem[Kai91]{vK91}
\bysame, \emph{Bowen-{M}argulis and {P}atterson measures on negatively curved
  compact manifolds}, Dynamical systems and related topics ({N}agoya, 1990),
  Adv. Ser. Dynam. Systems, vol.~9, World Sci. Publ., River Edge, NJ, 1991,
  pp.~223--232. \MR{1164891}

\bibitem[Kat80]{aK80}
A.~Katok, \emph{Lyapunov exponents, entropy and periodic orbits for
  diffeomorphisms}, Inst. Hautes \'Etudes Sci. Publ. Math. (1980), no.~51,
  137--173. \MR{573822}

\bibitem[Kat07]{aK07}
Anatole Katok, \emph{Fifty years of entropy in dynamics: 1958--2007}, J. Mod.
  Dyn. \textbf{1} (2007), no.~4, 545--596. \MR{2342699}

\bibitem[KH95]{Kat}
Anatole Katok and Boris Hasselblatt, \emph{Introduction to the modern theory of
  dynamical systems}, Encyclopedia of Mathematics and its Applications,
  vol.~54, Cambridge University Press, Cambridge, 1995, With a supplementary
  chapter by Katok and Leonardo Mendoza. \MR{1326374}

\bibitem[Led84]{fL84}
F.~Ledrappier, \emph{Propri\'et\'es ergodiques des mesures de {S}ina\"\i},
  Inst. Hautes \'Etudes Sci. Publ. Math. (1984), no.~59, 163--188. \MR{743818}

\bibitem[Lep00]{Lep}
Renaud Leplaideur, \emph{Local product structure for equilibrium states},
  Trans. Amer. Math. Soc. \textbf{352} (2000), no.~4, 1889--1912. \MR{1661262}

\bibitem[LS82]{LS82}
Fran\c{c}ois Ledrappier and Jean-Marie Strelcyn, \emph{A proof of the
  estimation from below in {P}esin's entropy formula}, Ergodic Theory Dynam.
  Systems \textbf{2} (1982), no.~2, 203--219 (1983). \MR{693976}

\bibitem[Ma{\~n}87]{rM87}
Ricardo Ma{\~n}\'e, \emph{Ergodic theory and differentiable dynamics},
  Ergebnisse der Mathematik und ihrer Grenzgebiete (3) [Results in Mathematics
  and Related Areas (3)], vol.~8, Springer-Verlag, Berlin, 1987, Translated
  from the Portuguese by Silvio Levy. \MR{889254}

\bibitem[Mar70]{gM70}
G.~A. Margulis, \emph{Certain measures that are connected with u-flows on
  compact manifolds}, Funkcional. Anal. i Prilo\v zen. \textbf{4} (1970),
  no.~1, 62--76. \MR{0272984}

\bibitem[Pat76]{sP76}
S.~J. Patterson, \emph{The limit set of a {F}uchsian group}, Acta Math.
  \textbf{136} (1976), no.~3-4, 241--273. \MR{0450547}

\bibitem[PC09]{PC}
Yakov Pesin and Vaughn Climenhaga, \emph{Lectures on fractal geometry and
  dynamical systems}, Student Mathematical Library, vol.~52, American
  Mathematical Society, Providence, RI, 2009. \MR{2560337}

\bibitem[Pes77]{yP77}
Ja.~B. Pesin, \emph{Characteristic {L}japunov exponents, and smooth ergodic
  theory}, Uspehi Mat. Nauk \textbf{32} (1977), no.~4 (196), 55--112, 287.
  \MR{0466791}

\bibitem[Pes88]{yP88}
Ya.~B. Pesin, \emph{Dimension-like characteristics for invariant sets of
  dynamical systems}, Uspekhi Mat. Nauk \textbf{43} (1988), no.~4(262),
  95--128, 255. \MR{969568}

\bibitem[Pes97]{pes97}
Yakov~B. Pesin, \emph{Dimension theory in dynamical systems}, Chicago Lectures
  in Mathematics, University of Chicago Press, Chicago, IL, 1997, Contemporary
  views and applications. \MR{1489237}

\bibitem[PP84]{PP84}
Ya.~B. Pesin and B.~S. Pitskel', \emph{Topological pressure and the variational
  principle for noncompact sets}, Funktsional. Anal. i Prilozhen. \textbf{18}
  (1984), no.~4, 50--63, 96. \MR{775933}

\bibitem[PPS15]{PPS15}
Fr\'ed\'eric Paulin, Mark Pollicott, and Barbara Schapira, \emph{Equilibrium
  states in negative curvature}, Ast\'erisque (2015), no.~373, viii+281.
  \MR{3444431}

\bibitem[PS82]{PS82}
Ya.~B. Pesin and Ya.~G. Sina\u\i, \emph{Gibbs measures for partially hyperbolic
  attractors}, Ergodic Theory Dynam. Systems \textbf{2} (1982), no.~3-4,
  417--438 (1983). \MR{721733}

\bibitem[Roh52]{Roh}
V.~A. Rohlin, \emph{On the fundamental ideas of measure theory}, Amer. Math.
  Soc. Translation \textbf{1952} (1952), no.~71, 55. \MR{0047744}

\bibitem[RS75]{RS75}
David Ruelle and Dennis Sullivan, \emph{Currents, flows and diffeomorphisms},
  Topology \textbf{14} (1975), no.~4, 319--327. \MR{0415679}

\bibitem[Rue76]{dR76}
David Ruelle, \emph{A measure associated with axiom-{A} attractors}, Amer. J.
  Math. \textbf{98} (1976), no.~3, 619--654. \MR{0415683}

\bibitem[Rue78a]{dR78}
\bysame, \emph{An inequality for the entropy of differentiable maps}, Bol. Soc.
  Brasil. Mat. \textbf{9} (1978), no.~1, 83--87. \MR{516310}

\bibitem[Rue78b]{Rue}
\bysame, \emph{Thermodynamic formalism}, Encyclopedia of Mathematics and its
  Applications, vol.~5, Addison-Wesley Publishing Co., Reading, Mass., 1978,
  The mathematical structures of classical equilibrium statistical mechanics,
  With a foreword by Giovanni Gallavotti and Gian-Carlo Rota. \MR{511655}

\bibitem[Sar15]{oS15}
Omri~M. Sarig, \emph{Thermodynamic formalism for countable {M}arkov shifts},
  Hyperbolic dynamics, fluctuations and large deviations, Proc. Sympos. Pure
  Math., vol.~89, Amer. Math. Soc., Providence, RI, 2015, pp.~81--117.
  \MR{3309096}

\bibitem[Sin68]{yS68}
Ja.~G. Sina\u\i, \emph{Markov partitions and c-diffeomorphisms}, Funkcional.
  Anal. i Prilo\v zen \textbf{2} (1968), no.~1, 64--89. \MR{0233038}

\bibitem[Sin72]{Sin}
\bysame, \emph{Gibbs measures in ergodic theory}, Uspehi Mat. Nauk \textbf{27}
  (1972), no.~4(166), 21--64. \MR{0399421}

\bibitem[Sma67]{sS67}
S.~Smale, \emph{Differentiable dynamical systems}, Bull. Amer. Math. Soc.
  \textbf{73} (1967), 747--817. \MR{0228014}

\bibitem[Sma98]{sS98}
Steve Smale, \emph{Finding a horseshoe on the beaches of {R}io}, Math.
  Intelligencer \textbf{20} (1998), no.~1, 39--44. \MR{1601831}

\bibitem[Sul79]{dS79}
Dennis Sullivan, \emph{The density at infinity of a discrete group of
  hyperbolic motions}, Inst. Hautes \'Etudes Sci. Publ. Math. (1979), no.~50,
  171--202. \MR{556586}

\bibitem[Via97]{mV97}
Marcelo Viana, \emph{Stochastic dynamics of deterministic systems}, Lecture
  notes XXI Braz. Math. Colloq., IMPA, Rio de Janeiro, 1997.

\bibitem[Wal82]{pW82}
Peter Walters, \emph{An introduction to ergodic theory}, Graduate Texts in
  Mathematics, vol.~79, Springer-Verlag, New York-Berlin, 1982. \MR{648108}

\bibitem[Wil67]{rW67}
R.~F. Williams, \emph{One-dimensional non-wandering sets}, Topology \textbf{6}
  (1967), 473--487. \MR{0217808}

\end{thebibliography}
\end{document}